\DeclareMathAlphabet{\pazocal}{OMS}{zplm}{m}{n}
\def\today{\ifcase\month\or
  January\or February\or March\or April\or May\or June\or
  July\or August\or September\or October\or November\or December\fi
  \space\number\day, \number\year}
\newtheorem{theorem}{Theorem}
\newtheorem{conjecture}{Conjecture}
\newtheorem{lemma}[theorem]{Lemma}
\newtheorem{proposition}[theorem]{Proposition}
\newtheorem{corollary}[theorem]{Corollary}
\newtheorem{obs}{Remark}
\newtheorem*{prob}{Main Problem}
\newcommand{\A}{\mathcal{A}}
\newcommand{\D}{\mathcal{D}}
\newcommand{\F}{\mathcal{F}}
\newcommand{\G}{\mathcal{G}}
\newcommand{\M}{\mathcal{M}}
\renewcommand{\P}{\mathcal{P}}
\newcommand{\R}{\mathcal{R}}
\newcommand{\T}{\mathcal{T}}
\newcommand{\V}{\mathcal{V}}
\newcommand{\HH}{\mathbb{H}}
\newcommand{\z}{\mathbb{Z}}
\newcommand{\q}{\mathbb{Q}}
\renewcommand{\r}{\mathbb{R}}
\newcommand{\cp}{\mathbb{C}} 
\newcommand{\im}{{\rm Im}\,}
\newcommand{\re}{\Re}
\newcommand{\ft}{\widehat}
\newcommand{\bo}{\boldsymbol}
\newcommand{\wt}{\widetilde}
\newcommand{\la}{\lambda}
\newcommand{\ga}{\gamma}
\newcommand{\al}{\alpha}
\newcommand{\be}{\beta}
\newcommand{\ep}{\varepsilon}
\newcommand{\si}{\sigma}
\newcommand{\p}{\varphi}
\newcommand{\dens}{{\rm dens}}
\newcommand{\LP}{{\rm LP}}
\newcommand{\cc}{\mathfrak{c}}
\renewcommand{\b}{\mathfrak{b}}
\newcommand{\bd}{{\tfrac12}B_d}
\newcommand{\vol}{{\rm vol}}
\newcommand{\tu}[1]{{(#1)^{\rm trunc}}}
\newcommand{\td}[1]{{(#1)_{\rm trunc}}}
\renewcommand{\d}{\mathrm{d}}
\newcommand{\kiss}{\mathrm{kiss}}
\begin{document}


\title[]{Sphere Packings in Euclidean Space With Forbidden Distances}
\author[Gon\c{c}alves]{Felipe Gon\c{c}alves and Guilherme Vedana}
\date{\today}
\subjclass[2010]{}
\keywords{}
\address{The University of Texas at Austin, 2515 Speedway, Austin, TX 78712, USA  \newline   {\&} IMPA - Instituto de Matemática Pura e Aplicada, Rio de Janeiro, 22460-320, Brazil.}
\email{felipe.ferreiragoncalves@austin.utexas.edu}
\address{IMPA - Instituto de Matemática Pura e Aplicada, Rio de Janeiro, 22460-320, Brazil.}
\email{guilherme.israel@impa.br}
\allowdisplaybreaks

\begin{abstract}
We study the sphere packing problem in Euclidean space where we impose additional constraints on the separations of the center points.
We prove that any sphere packing in dimension $48$, with spheres of radii $r$, such that \emph{no} two centers $x_1$ and $x_2$ satisfy $\sqrt{\tfrac{4}{3}} < \frac{1}{2r}|x_1-x_2| <\sqrt{\tfrac{5}{3}}$, has center density less or equal than $(3/2)^{24}$. Equality occurs for periodic packings if and only if the packing is given by a $48$-dimensional even unimodular extremal lattice. This shows that any of the lattices $P_{48p},P_{48q},P_{48m}$ and $P_{48n}$ are optimal for this constrained packing problem, and gives evidence towards the conjecture that extremal lattices are optimal unconstrained sphere packings in $48$ dimensions. We also provide results for packings up to dimension $d\leq 1200$, where we impose constraints on the distance between centers and on the minimal norm of the spectrum, showing that even unimodular extremal lattices are again uniquely optimal. Moreover, in the one-dimensional case, where it is not at all clear that periodic packings are among those with largest density, we nevertheless give a condition on the set of constraints that allows this to happen, and we develop an algorithm to find these periodic configurations by relating the problem to a question about dominos.
\end{abstract}

\maketitle

\tableofcontents

\section{Introduction}
The notorious \emph{Sphere Packing Problem} asks a simple question:  \emph{What is the best way of stacking higher-dimensional oranges\footnote{The authors' common favorite fruit.} in a higher-dimensional supermarket?} It is not too surprising that in dimension $3$, an optimal configuration arises when oranges (or cannonballs, which are not so juicy) are arranged in a hexagonal close packing, where laminated layers of spheres are assembled according to a suitable translation of the hexagonal lattice (fitting new spheres in the deep holes of the previous layer). What is remarkable is that this problem was proposed by Kepler around 1611 and was only solved in 1998 by Hales, in his famous large computer-assisted proof \cite{H05}. Recently, in 2016, the problem in dimensions $8$ and $24$ was solved by Viazovska et al. \cite{CKMRV17,Vi17}, introducing a remarkable new construction using quasi-modular forms to define certain smooth auxiliary functions $f_8$ and $f_{24}$ satisfying certain sign constraints in physical and frequency space to solve the problem.\footnote{Viazovska received the Fields medal in 2022 for her accomplishments.} To the best of our knowledge, there are only two other instances where Viazovska's technique was used to construct auxiliary functions that were indeed used in the solution of some kind of optimization problem: (1) in the solution of a $12$-dimensional uncertainty principle by the Cohn and Gon\c{c}alves \cite{CG19} (best constant and function); (2) in the proof of the universality of the $E_8$ and Leech lattices by Cohn, Kumar, Miller, Radchenko and Viazovska \cite{CKMRV21}. 


\subsection{Motivation} We systematically study a new type of constrained sphere packing problem, where we forbid certain short distances between centers of spheres. We are then able to solve exactly this problem when the forbidden set is the complement of a finite collection of square roots of even integers. The main questions driving/inspiring the problems we study and solve in this manuscript are the following:
\begin{itemize}
\item What other `natural' discrete geometry problems can be solved using Viazovska's modular forms construction technique? 
 
\item Can we embed the functions $f_8$ and $f_{24}$ found by Viazovska in a larger family of functions $\{f_{d}\}_{8|d}$ in such a way that these work as auxiliary functions that solve some kind of dual optimization problem of the question above?
\end{itemize}

\noindent{\bf Our answer}: \emph{Packings with forbidden distances and Theorems} \ref{thm:LP-bounds} \emph{and} \ref{thm:theguy}.

We now comment about our answer. First of all, the idea of creating a larger family of functions $\{f_{d}\}_{8|d}$ that contains Viazovska's functions was already explored in the interesting papers \cite{RW,FGH}. However, in these two papers, no geometrical optimization problem was solved, and the functions they generate are not related to ours. Also, it is worth pointing out that, as we shall see in the proof of Theorem \ref{thm:theguy}, there are several constraints our functions $\{H_d\}_{8|d}$ satisfy. The troublesome part is to show that certain sign conditions are met, and these do not follow from positivity of Fourier coefficients (as in \cite{FGH}, conjecturally), since they indeed change sign in our case. To overcome this, we came up with a numerical procedure, inspired by the one in \cite{CKMRV17}, and so our proof is unavoidably computer-assisted. Secondly, at first glance, one might say that imposing additional constraints in a sphere packing, such as forbidding certain distances, is esoteric or unnatural.  However, from the coding theory perceptive (sphere packings in $\mathbb{F}_q^m$), this question has been asked already, studied to some extent and has applications. To the best of our knowledge, we believe we were the first to consider this question in Euclidean space; nevertheless, in coding theory, codes with forbidden distances have been the object of study in many occasions. See, for instance, \cite{BCZ20,BZZPP,EFIN87,F}.  In Euclidean space, a cousin problem of the sphere packing problem is the chromatic number of $\r^d$, and in this venue mathematicians have considered already the version with forbidden distances; see, for instance, \cite{B,R,N23}. Thirdly, there are a bunch of unexpected features coming from our study that might drive further research, such as the following:

 $\heartsuit$ The functions $\{H_d\}_{8|d}$ we create in the proof of Theorem \ref{thm:theguy} have several curious properties that we have verified with a computer up to $d=1200$, but they lack proper mathematical explanation. Proving these properties propagate in every dimension $8|d$ would allow us to extend Theorem \ref{thm:theguy} to every dimension; 
 
  $\heartsuit$ The one-dimensional case we study in Section \ref{sec:1DD} is a very intriguing combinatorial/geometric problem that seems hard to analyze. The natural question here is to know when the best packing can be taken to be periodic, and we do provide a partial answer when the complement of the forbidden set is finite or has finitely many accumulation points (so to speak); 
  
  $\heartsuit$ Perhaps the most interesting contribution of our manuscript is Theorem \ref{thm:extremallattices1}, which we single out. It turns out that in dimension $48$, the constraints we need to impose are rather simple and nice, and we show that extremal lattices are optimal. Moreover, it is conjectured that extremal lattices are optimal unconstrained sphere packings in dimension $48$, so one can also see Theorem \ref{thm:extremallattices1} as further evidence to Conjecture \ref{conj:111}.
  
  $\heartsuit$  Since the submission of this paper, there have been further results on this topic that we would like to highlight. In \cite{BC}, Boyvalenkov  and Cherkashin find the largest kissing number with forbidden distances in dimension 48 - a configuration avoiding the set $(-1/3,-1/6)\cup (1/6,1/3)$. The related energy problem is investigated in \cite{BD}. Moreover, and most surprisingly, in \cite{BCD}, Boyvalenkov, Cherkashin and Dragnev find several types of distance avoiding optimal spherical codes in $\mathbb{S}^{15}, \mathbb{S}^{21}, \mathbb{S}^{22}$ and $\mathbb{S}^{23}$, via the linear programming method.


\subsection{Main results} As a prototype example of the kind of problem we will be concerned with, imagine that we are trying to place solid disks of diameter $1$ in $\r^2$, so to obtain the largest possible density. However, we require that either two disks kiss each other or their centers are far apart, say, with a distance not smaller than $\lambda>1$. As $\lambda$ slowly increases, we expect to see a transition between disks being allowed to `freely' move around and disks clumping together. Indeed, we show in Proposition \ref{prop:limitcase} that as $\la\to\infty$, the best arrangement is when three disks are placed on the vertices of an equilateral triangle of side length $1$ (kissing each other) and the circuncenters of these triangles are placed in a hexagonal lattice of side length approximately $\la$. In this paper we study a generalized version of this problem, where \emph{an arbitrary set of distances may be forbidden}.

We say that a sphere packing $P=X+r B_d$ ($B_d$ is the unit ball in $\r^d$ and $X$ the set of centers) \emph{avoids} a set $A\subset (1,\infty)$ if $|x_1-x_2|\notin 2 r A$ for all distinct $x_1,x_2\in X$ ($A$ is a set of \emph{forbidden distances}).  For instance, for the problem described in the previous paragraph, the set $A$ would be the interval $(1,\la)$. A periodic sphere packing is one where $X=\Lambda+Y$, $\Lambda$ is a lattice of minimal norm at least $2r$ and  $Y\subset \r^d/\Lambda$ is a nonempty finite set. A lattice packing is when $\#Y=1$. A lattice is even and unimodular if it has even squared norms and determinant $1$. Such lattice is said to be extremal if its minimal norm squared is equal to $2+2\lfloor d/24 \rfloor$ (see Section \ref{sec:main} for more information). We now state the first main result of this paper. 

\begin{theorem}\label{thm:extremallattices1}
Any even unimodular extremal lattice in $\r^{48}$ achieves maximal sphere packing density among all sphere packings that avoid the interval $\left(\sqrt{\tfrac43},\sqrt{\tfrac53}\right)$. Moreover, we have uniqueness among all periodic packings: if $P=\Lambda+Y+rB_d$ is some periodic sphere packing in $\r^{48}$ that avoids this interval and has maximal density, then $\frac{\sqrt{6}}{2r}(\Lambda+Y)$ is an even unimodular extremal lattice.
\end{theorem}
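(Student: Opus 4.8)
The plan is to attack this via the linear programming method for sphere packings, adapted to the ``forbidden distance'' setting. The key observation is that a packing which avoids the interval $I=(\sqrt{4/3},\sqrt{5/3})$ has all its (rescaled) distances in the set $[1,\sqrt{4/3}]\cup[\sqrt{5/3},\infty)$; after the normalization in which the relevant lattice has minimal norm $6$ (so $\sqrt 6/(2r)$ is the scaling factor appearing in the statement), the forbidden band corresponds to \emph{squared} distances strictly between $8$ and $10$, i.e.\ to excluding only the value $9$ among even integers in a neighborhood. So I would first construct, following the Cohn--Elkies paradigm but with a sign condition that is relaxed on $I$, an auxiliary ``magic'' function $f=H_{48}\colon\r^{48}\to\r$ which is radial, has $f(0)=\ft f(0)>0$, satisfies $\ft f\ge 0$ everywhere, and has $f(x)\le 0$ for all $|x|$ in the \emph{allowed} distance range $[\sqrt{8},\infty)\setminus I$ (renormalized), but is permitted to be positive on $I$. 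The existence and the required sign behavior of such an $f$ is exactly what Theorem~\ref{thm:LP-bounds} (the LP bound for forbidden-distance packings) together with the explicit construction $\{H_d\}_{8\mid d}$ of Theorem~\ref{thm:theguy} provide; from these one gets the density bound $(3/2)^{24}$ on the center density of any such packing.

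Next I would verify that an even unimodular extremal lattice $\Lambda$ in $\r^{48}$ actually meets this bound \emph{and} lies in the admissible class. An extremal $48$-dimensional even unimodular lattice has minimal norm $2+2\lfloor 48/24\rfloor = 6$, determinant $1$, hence covolume $1$, so its center density is $(\sqrt 6/2)^{48}/2^{48}\cdot\ldots$ — more cleanly, with spheres of radius $r=\sqrt6/2$ it has center density $(\sqrt{6}/2)^{48} = (3/2)^{24}$, matching the bound. For admissibility I must check that such a lattice avoids $I$: its vectors have squared norm an even integer $\ge 6$, so rescaled distances are $\sqrt{2k}$ with $k\ge 3$; none of these lands in $(\sqrt{4/3},\sqrt{5/3})\cdot(2r) = (\sqrt 8,\sqrt{10})$, since there is no even integer strictly between $8$ and $10$. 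Hence extremal lattices are feasible and optimal, proving the first assertion.

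For the equality/rigidity statement I would run the standard ``equality forces complementary slackness'' argument in the LP bound. If $P=\Lambda+rB_{48}$ is a \emph{lattice} packing avoiding $I$ with maximal density, then by Poisson summation applied to the dual lattice $\Lambda^*$ (suitably scaled), equality in the LP bound forces: (i) $\ft f$ vanishes at every nonzero point of the scaled dual lattice, and (ii) $f$ vanishes at every nonzero point of the scaled primal lattice. From (i) and the fact that $\ft f > 0$ at certain prescribed radii (which one reads off from the explicit $H_{48}$, in particular $\ft H_{48}$ being strictly positive at the norms $0$ and — crucially — at radius corresponding to squared norm $2$), one deduces that the scaled dual has no vectors of those short norms; combined with the volume constraint from equality this pins down $\Lambda$ up to scaling to be even, unimodular, and of minimal norm $6$, i.e.\ extremal. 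Concretely, writing $L = \tfrac{\sqrt6}{2r}\Lambda$, equality should force $\vol(\r^{48}/L)=1$ and the roots of $f,\ft f$ to sit exactly on $\sqrt{2\z_{\ge 3}}$ and $\sqrt{2\z_{\ge 1}}$ respectively, whence $L$ and $L^*$ are integral with the right minimal norms — forcing $L=L^*$ even unimodular extremal.

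The main obstacle, as the authors themselves flag, is \textbf{not} the structural LP argument but the verification that the auxiliary function $H_{48}$ has the claimed sign pattern: unlike in the original Viazovska construction, $f$ must be allowed to change sign on the forbidden band $I$ while remaining $\le 0$ on the (infinitely many) allowed distances beyond it, and $\ft f\ge 0$ must hold globally even though the Fourier coefficients are \emph{not} all positive. Establishing these sign conditions rigorously requires the computer-assisted interval-arithmetic argument of Theorem~\ref{thm:theguy} (modeled on \cite{CKMRV17}); granting that input, the proof of Theorem~\ref{thm:extremallattices1} is the clean two-part LP-plus-rigidity argument sketched above.
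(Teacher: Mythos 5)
Your proposal follows essentially the same route as the paper: feasibility and density of extremal lattices are checked directly, the upper bound comes from Theorem~\ref{thm:LP-bounds} applied to $F(x)=H(\sqrt{6}x)$ with $H=H_{48}$ from Theorem~\ref{thm:theguy} (whose computer-assisted sign verification you correctly identify as the real content), and the rigidity statement follows from equality in Poisson summation forcing $\ft F$ to vanish on the nonzero dual vectors, whence the scaled dual is even, integral, and of covolume one, so $\frac{\sqrt6}{2r}\Lambda$ is even unimodular of minimal norm $\sqrt6$, i.e.\ extremal. The only nitpick is that the sign condition on $F$ must hold on all allowed distances starting from the minimal one, i.e.\ on $[\sqrt6,\sqrt8]\cup[\sqrt{10},\infty)$ after rescaling (not just from $\sqrt8$ on), which is exactly what the properties $H\le 0$ for $|x|^2>10$ and $\{|x|^2:H(x)<0\}\cap(0,10)=(6,8)$ deliver.
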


Our result shows that any of the lattices $P_{48p},P_{48q},P_{48m}$ and $P_{48n}$ are optimal for this constrained packing problem.
The first two lattices have a canonical construction as 2-neighbors of code lattices of extremal ternary codes (see \cite[p. 195]{CS} and \cite{N14} for the other two).

\begin{conjecture}\label{conj:111}
Any extremal lattice in dimension $48$ has maximal sphere packing density among all possible sphere packings.
\end{conjecture}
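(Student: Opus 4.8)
\emph{Proof proposal for Conjecture~\ref{conj:111}.}
The natural attack is the Cohn--Elkies linear programming method: it would suffice to produce in dimension $48$ an honest magic function, i.e. an admissible radial $g\colon\r^{48}\to\r$ with $g(0)=\ft g(0)>0$, with $\ft g\ge0$ everywhere, and with $g(x)\le0$ for \emph{every} $|x|\ge\sqrt6$ (no exceptional interval). Then the unconstrained ($A=\emptyset$) case of Theorem~\ref{thm:LP-bounds}, which is the classical bound of \cite{CE03}, forces every sphere packing in $\r^{48}$ to have center density at most $(\sqrt6/2)^{48}=(3/2)^{24}$, and this is attained by any even unimodular extremal lattice rescaled to minimal norm $6$ (covolume $1$, minimal distance $\sqrt6$), exactly as in the equality computation behind Theorem~\ref{thm:extremallattices1}. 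If moreover $g$, and likewise $\ft g$, has the zero pattern at the radii $\sqrt{2n}$ ($n\ge3$) that $f_8$ and $f_{24}$ exhibit --- a simple zero at the minimal radius, double zeros beyond --- then \cite[Appendix]{CE03} would additionally single out the extremal lattices as the unique optimal periodic packings.

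To construct $g$ I would follow Viazovska's modular-forms recipe and the ansatz already behind Theorem~\ref{thm:theguy}: write $g=g_++g_-$ with $\ft{g_\pm}=\pm g_\pm$, each eigenfunction being a Laplace-type integral $\int\psi_\pm(z)\,e^{\pi i|x|^2 z}\,\d z$ of a weakly holomorphic (quasi)modular form $\psi_\pm$ of suitable weight and level with prescribed principal parts at the cusps; one arranges the $g_\pm$ to vanish to second order at the $\sqrt{2n}$ ($n\ge3$), after which $g\le0$ on $[\sqrt6,\infty)$ and $\ft g\ge0$ become sign conditions on the $q$-expansions and cusp data of the $\psi_\pm$. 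The function $H_{48}$ of Theorem~\ref{thm:theguy} already realizes all of this \emph{except} that $H_{48}>0$ on a (presumably short) subinterval of $(\sqrt8,\sqrt{10})$ --- and that lone positive bump is precisely the entire gap between Theorem~\ref{thm:extremallattices1} and Conjecture~\ref{conj:111}. The remaining task is therefore to adjust the free parameters of the construction, or enlarge the ansatz, so as to push the function $\le0$ on all of $[\sqrt6,\infty)$ while keeping its Fourier transform nonnegative and the $\sqrt{2n}$ vanishing intact, and then to certify the sign conditions rigorously by interval arithmetic in the manner of \cite{CKMRV17}.

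The main obstacle is exactly this last step, and it is a hard one: as the authors emphasize, the sign of the magic function on $(\sqrt8,\sqrt{10})$ does not follow from positivity of Fourier coefficients --- it genuinely changes sign for $H_{48}$ --- so no soft modular-forms argument is available, and no exact magic function in dimension $48$ is known even to exist. I would first try to decide which of two scenarios holds: either the positive bump is an artifact of the particular numerical ansatz that a richer family of auxiliary functions removes --- a possibility made plausible by the authors' remark that the family $\{H_d\}_{8\mid d}$ displays unexplained regularities whose propagation would already extend Theorem~\ref{thm:theguy} --- or the bump reflects a genuine obstruction in the relevant spaces of modular forms, in which case the Cohn--Elkies bound is not sharp in dimension $48$ and the conjecture, if true, needs a wholly different mechanism.

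As a fallback that avoids a full magic function, one can try to leverage Theorem~\ref{thm:extremallattices1} directly: it already bounds by $(3/2)^{24}$ every packing that avoids $(\sqrt{4/3},\sqrt{5/3})$, so the conjecture reduces to ruling out a packing that contains a pair of centers at normalized distance inside $(\sqrt{4/3},\sqrt{5/3})$ and exceeds that density. This is a local-density statement of the flavour that makes the three-dimensional Kepler problem hard, and I would not expect it to be easier than the function-theoretic route; one might nonetheless attempt a hybrid linear-programming bound in which the short distances of $(\sqrt{4/3},\sqrt{5/3})$ are treated as a rare, penalized event, interpolating between Theorem~\ref{thm:extremallattices1} and the unconstrained problem.
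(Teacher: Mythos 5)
The statement you are addressing is stated in the paper as a \emph{conjecture}, not a theorem: the authors offer no proof of Conjecture~\ref{conj:111}, only the circumstantial evidence of Theorem~\ref{thm:extremallattices1} (optimality among packings avoiding $(\sqrt{4/3},\sqrt{5/3})$) and the absence of any known denser configuration. Your text is likewise not a proof but a research programme, and you are candid about where it stops. The genuine gap is exactly the one you name and do not close: to run the Cohn--Elkies bound unconditionally in dimension $48$ one needs an admissible radial $g$ with $\ft g\ge 0$ everywhere and $g(x)\le 0$ for \emph{all} $|x|\ge\sqrt6$, and no such function is produced here or in the paper. Worse, the evidence cuts against your first scenario: the function $H_{48}$ of Theorem~\ref{thm:theguy} is forced (by the interpolation-type rigidity you yourself allude to) to be essentially the unique candidate with the prescribed vanishing at the radii $\sqrt{2n}$, and it is genuinely positive on the squared-radius interval $(8,10)$; the paper's authors remark that such functions are ``in a way unique,'' so ``adjusting free parameters'' within the same ansatz has no room to remove the bump. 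One would have to change the vanishing conditions themselves (e.g.\ drop the double zero at $\sqrt{10}$ or enlarge $l_d$), and then the whole sign analysis must be redone with no guarantee of success --- indeed it is not known whether the LP bound is even sharp in dimension $48$, in which case no magic function exists and the conjecture requires a different mechanism entirely.

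Two smaller inaccuracies are worth flagging. First, your uniqueness remark via \cite[Appendix]{CE03} presupposes the clean zero pattern of $f_8$ and $f_{24}$; the paper explicitly notes that its $48$-dimensional function has zeros at radii other than $\sqrt{2n}$, which is precisely why they could not upgrade lattice uniqueness to periodic uniqueness even in the constrained problem. Second, your fallback reduction --- ``it suffices to rule out a denser packing using a normalized distance in $(\sqrt{4/3},\sqrt{5/3})$'' --- is a correct but content-free reformulation: it restates the remaining difficulty rather than reducing it, and the paper's own Conjecture on covolumes of lattices with a vector of norm in $(\sqrt{8},\sqrt{10})$ is the lattice-only instance of the same reduction. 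In short: the approach you outline is the natural one and matches the authors' own framing, but the decisive step is missing, and the statement remains open.
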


This conjecture is backed by the fact that no other better configuration is known. Perhaps an `easier' to prove conjecture is that extremal lattices in $48$ dimensions are the best \emph{lattice} packings. We believe Theorem \ref{thm:extremallattices1} could be used together with a computer-assisted method to reduce the amount of cases needed to be checked and show that $P_{48p}$ produces the best lattice packing; however, new ideas are needed here.  Below, we state a bold conjecture, which serves more as a research direction, as we have no numerical evidence towards it.

\begin{conjecture}
Let $L<\r^{48}$ be a lattice with  minimal norm $ \sqrt{6}$. If there is $x \in L$ with $\sqrt{8}<  |x| < \sqrt{10}$, then $L$ has covolume $> 1$.
\end{conjecture}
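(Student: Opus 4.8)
Rescale so that the minimal norm of $L$ is exactly $\sqrt6=2r$ with $r=\sqrt6/2$; then the center density of $L$ equals $\delta(L)=r^{48}/\operatorname{covol}(L)=(3/2)^{24}/\operatorname{covol}(L)$, so the desired conclusion $\operatorname{covol}(L)>1$ is equivalent to the \emph{strict} bound $\delta(L)<(3/2)^{24}$, i.e.\ that $L$ is strictly less dense than an extremal lattice. I would first dispose of the case in which $L$ has no vector of norm strictly between $\sqrt8$ and $\sqrt{10}$: in that case $|x_1-x_2|\notin 2r(\sqrt{4/3},\sqrt{5/3})=(\sqrt8,\sqrt{10})$ for all distinct $x_1,x_2\in L$, so $L$ avoids $(\sqrt{4/3},\sqrt{5/3})$ in the sense of Theorem~\ref{thm:extremallattices1}, giving $\delta(L)\le(3/2)^{24}$ with equality only for even unimodular extremal lattices; those have vectors only at norms $\sqrt6,\sqrt8,\sqrt{10},\dots$, hence none strictly inside the interval, so this case is consistent with (and does not affect) the conjecture. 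Therefore I may assume $L$ contains a vector $v$ with $\sqrt8<|v|<\sqrt{10}$ and argue for a contradiction starting from $\operatorname{covol}(L)\le1$.

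The engine would be Poisson summation against the magic function $H_{48}$ constructed in the proof of Theorem~\ref{thm:theguy}. Since $\widehat{H_{48}}\ge0$ everywhere and $\widehat{H_{48}}(0)=H_{48}(0)>0$,
\begin{equation*}
\frac{H_{48}(0)}{\operatorname{covol}(L)}\ \le\ \sum_{x\in L}H_{48}(x)\ =\ H_{48}(0)+\!\!\sum_{\substack{x\in L\setminus\{0\}\\ |x|\notin(\sqrt8,\sqrt{10})}}\!\!H_{48}(x)\ +\!\!\sum_{\substack{x\in L\\ \sqrt8<|x|<\sqrt{10}}}\!\!H_{48}(x),
\end{equation*}
and the middle sum is $\le0$ because $H_{48}(x)\le0$ for every $x$ with $|x|\ge\sqrt6$ outside the forbidden annulus. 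Hence $\operatorname{covol}(L)\le1$ would force $\sum_{x\in L,\ \sqrt8<|x|<\sqrt{10}}H_{48}(x)\ge0$, and the whole problem is reduced to showing that \emph{whenever a lattice of minimal norm $\sqrt6$ has at least one vector in the annulus, this sum is strictly negative} — which contradicts the display and finishes the argument. This is where the difficulty concentrates, since $H_{48}$ is known to take positive values on $(\sqrt8,\sqrt{10})$; that is exactly why the interval had to be forbidden in Theorem~\ref{thm:theguy}.

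I see two plausible routes to the reduced statement. The first is to replace $H_{48}$ by the optimum of a larger linear program that keeps the Cohn--Elkies/Poisson constraints but adds inequalities expressing that $v\in L$ and that $L$ is closed under $x\mapsto x\pm v$ and under addition of minimal vectors; then a vector in the annulus drags along a controlled family of companion vectors at norms where the dual function is negative, and one hopes their total contribution outweighs the positive one at $v$ (a ``forced vectors'' strengthening, in the spirit of LP improvements that exploit a known sublattice). The second route is dimension reduction: because $|v|^2<10<4\cdot6$ the vector $v$ is primitive, so $\operatorname{covol}(L)=|v|\cdot\operatorname{covol}(\bar L)$ with $\bar L=\operatorname{proj}_{v^{\perp}}L$ a lattice of rank $47$, and it would suffice to prove $\operatorname{covol}(\bar L)>1/|v|>1/\sqrt{10}$, which would follow from a sufficiently strong sphere-packing bound in dimension $47$ once the minimal norm of $\bar L$ is pinned down.

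The main obstacle, for either route, is precisely the phenomenon that makes the forbidden interval necessary: there is no known auxiliary function nonpositive on all of $\{|x|\ge\sqrt6\}$ that is still sharp at extremal lattices, and a single annulus vector does not obviously force enough negative companion terms to beat its own positive contribution; symmetrically, the sliced lattice $\bar L$ can be too sparse in dimension $47$ (the projections of the minimal vectors of $L$ may be quite short), so the naive packing bound there is not decisive. Genuine progress here seems to require either a new interpolation-type construction — a family of functions adapted to ``exactly one forbidden distance being used'' rather than avoided — or an arithmetic input, for instance from modular forms, constraining the theta series of a hypothetical $48$-dimensional lattice of covolume $\le1$ and minimal norm $\sqrt6$; at present, as noted above, there is no numerical evidence pointing either way.
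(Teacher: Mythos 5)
You should first be aware that this statement is not a theorem of the paper: it is stated explicitly as a conjecture, offered ``as a research direction,'' and the authors say they have \emph{no} proof and not even numerical evidence for it. So there is no paper proof to match, and the question is only whether your proposal actually establishes the claim. It does not. Your setup is fine --- the rescaling, the equivalence of $\operatorname{covol}(L)>1$ with $\delta(L)<(3/2)^{24}$, and the Poisson summation display with $\widehat{H_{48}}\ge 0$ and $H_{48}\le 0$ off the annulus are all correct --- but the ``reduced statement'' you arrive at is vacuous. By Theorem \ref{thm:theguy} the set $\{|x|^2 : H_{48}(x)<0\}\cap(0,10)$ is exactly $(6,8)$, so $H_{48}\ge 0$ everywhere on the annulus $\sqrt 8<|x|<\sqrt{10}$; hence the sum $\sum_{\sqrt 8<|x|<\sqrt{10}}H_{48}(x)$ is automatically nonnegative and can never be made strictly negative. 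The contradiction you aim for is therefore unreachable with $H_{48}$ itself, and your argument at that point has reduced the conjecture to a false intermediate claim rather than to anything provable.

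The two escape routes you sketch (a strengthened LP with ``forced companion vectors,'' and slicing along the primitive vector $v$ to a rank-$47$ problem) are reasonable directions, and your observation that $|v|^2<24$ forces $v$ primitive is correct, but neither route is carried out: the first requires an auxiliary function or LP certificate that nobody (including the authors) knows how to construct, and the second founders exactly where you say it does, since the minimal norm of the projected lattice $\bar L$ is not controlled and no $47$-dimensional bound of the required strength is available. So the honest assessment is that you have correctly located the obstruction --- the positivity of the magic function on $(\sqrt 8,\sqrt{10})$ is precisely why that interval had to be forbidden in Theorem \ref{thm:extremallattices1} --- but you have not overcome it, and the statement remains open.
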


This conjecture in conjunction with Theorem \ref{thm:extremallattices1} implies that extremal lattices in $\r^{48}$ are the best lattice sphere packings. To see this, given any lattice $L$, normalize it so it has minimal norm $\sqrt{6}$. If there is no point $x\in L$ such that $\sqrt{8}<|x|<\sqrt{10}$, we then use Theorem \ref{thm:extremallattices1}; if such a point exists, we use the conjecture.

Theorem \ref{thm:extremallattices1} will follow from Theorems \ref{thm:LP-bounds} and \ref{thm:theguy}, where we develop a new linear programming method, similar to the Cohn and Elkies linear programming bound \cite[Theorem 3.1]{CE03}, and a generalization of Viazovska's modular function technique \cite{CKMRV17,Vi17} to find the desired `magic' function.  It turns out that if we allow ourselves to impose an extra condition on the spectrum of a given periodic configuration, one can prove a result similar to Theorem \ref{thm:extremallattices1} in every dimension $d$ multiple of $8$ not congruent to $16$ modulo $24$ up to $d=1200$.

Define the forbidden set
$$
A_d=(1,\sqrt{1+2/a_d})\cup (\sqrt{1+2/a_d},\sqrt{1+4/a_d}) \cup \ldots \cup (\sqrt{(l_d-2)/a_d},\sqrt{l_d/a_d}),
$$
where
\begin{equation}
\label{eq:def_a_d_l_d}
a_d=2+2\left\lfloor \frac{d}{24}\right\rfloor \ \ \text{ and }  \ \  l_d = a_d + 4\bigg(\bigg\lfloor \frac{d-4}{12} \bigg\rfloor-\bigg\lfloor \frac{d}{24} \bigg\rfloor\bigg).
\end{equation}
Our second main result is the following.

\begin{theorem}\label{thm:extremallattices2}
Let $8\leq d\leq 1200$, where $d$ is divisible by $8$ but $d\nequiv 16 \mod 24$. Let $P=\Lambda+Y+rB_d$ be some periodic sphere packing that avoids the set $A_d$ and such that  the minimal norm of $\Lambda^*$ is larger than $2r \sqrt{c_d}$, where $c_d$ is given in Table \ref{table:cd}. Then $$\dens(P)\leq \vol \left( B_d\right) \left( \frac{\sqrt{a_d}}{2}\right)^d.$$ Moreover, in case $\#Y=1$ then equality occurs if and only if $\frac{\sqrt{a_d}}{2r}\Lambda$ is an even unimodular extremal lattice.
\end{theorem}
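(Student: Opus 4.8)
\ The plan is to derive Theorem~\ref{thm:extremallattices2} by the same route as Theorem~\ref{thm:extremallattices1} in dimension $48$, namely by plugging the magic function $H_d$ of Theorem~\ref{thm:theguy} into the linear programming bound of Theorem~\ref{thm:LP-bounds}. Dilated to the packing's scale, $H_d$ is a Schwartz function with $H_d(x)\le 0$ whenever $|x|\ge 2r$ and $|x|\notin 2rA_d$, with $\widehat{H_d}(\xi)\ge 0$ whenever $|\xi|>2r\sqrt{c_d}$, and with $H_d(0),\widehat{H_d}(0)>0$ and $H_d(0)/\widehat{H_d}(0)$ normalized to the value (concretely $\bigl(\tfrac{\sqrt{a_d}}{2r}\bigr)^{d}$) that makes the bound below equal to $\vol(B_d)(\sqrt{a_d}/2)^{d}$. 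The bound is then the usual linear programming computation: for a periodic packing $P=\Lambda+Y+rB_d$ avoiding $A_d$, Poisson summation gives
\[
\#Y\cdot H_d(0)\ \ge\ \sum_{\lambda\in\Lambda}\sum_{y,y'\in Y}H_d(\lambda+y-y')\ =\ \frac{1}{\vol(\Lambda)}\sum_{\xi\in\Lambda^{*}}\widehat{H_d}(\xi)\,\bigl|S_Y(\xi)\bigr|^{2},
\]
where $S_Y(\xi)=\sum_{y\in Y}e^{2\pi i\langle\xi,y\rangle}$ and the inequality holds because each nonzero summand is a difference of two distinct centers, hence a point where $H_d\le 0$. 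When the minimal norm of $\Lambda^{*}$ exceeds $2r\sqrt{c_d}$, every nonzero $\xi\in\Lambda^{*}$ satisfies $\widehat{H_d}(\xi)\ge 0$, so the right-hand side is $\ge(\#Y)^{2}\widehat{H_d}(0)/\vol(\Lambda)$; rearranging gives $\vol(\Lambda)\ge \#Y\,\widehat{H_d}(0)/H_d(0)$, and therefore $\dens(P)=\#Y\,\vol(rB_d)/\vol(\Lambda)\le\vol(rB_d)\,H_d(0)/\widehat{H_d}(0)=\vol(B_d)(\sqrt{a_d}/2)^{d}$.

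For the equality case assume $\#Y=1$ and that $\dens(P)$ attains the bound. Then every inequality above is an equality, which forces $H_d(\lambda)=0$ for all $\lambda\in\Lambda\setminus\{0\}$, $\widehat{H_d}(\xi)=0$ for all $\xi\in\Lambda^{*}\setminus\{0\}$, and $\vol(\Lambda)=\widehat{H_d}(0)/H_d(0)$; the last identity says exactly that $L:=\tfrac{\sqrt{a_d}}{2r}\Lambda$ has covolume $1$. Since $P$ avoids $A_d$ and $\Lambda$ has minimal norm $\ge 2r$, every nonzero $w\in L$ has $|w|^{2}\in\{a_d,a_d+2,\dots,l_d\}\cup[l_d,\infty)$. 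Intersecting this with the relations $H_d(\lambda)=0$ and using the description of the zero set of $H_d$ from Theorem~\ref{thm:theguy} — that, among radii $|x|\ge 2r$ lying outside $2rA_d$, $H_d$ vanishes only at $|x|=2r\sqrt{2n/a_d}$ for integers $n\ge a_d/2$ (its remaining zeros lie inside the forbidden intervals $2rA_d$, which $L$'s vectors avoid) — we conclude that every nonzero squared length of $L$ is an even integer $\ge a_d$. By polarization $L$ is integral, hence it is an even lattice of determinant $1$, i.e.\ even unimodular, which forces $8\mid d$. Finally, $\Theta_L$ lies in $M_{d/2}(\mathrm{SL}_2(\z))$, a space of dimension $\lfloor d/24\rfloor+1$ (since $8\mid d$), on which, by the valence formula, a form is determined by its Fourier coefficients in degrees $0,1,\dots,\lfloor d/24\rfloor$; as $\Theta_L$ has constant term $1$ and vanishing coefficients in degrees $1,\dots,\lfloor d/24\rfloor$ (the minimal norm of $L$ being $\ge a_d=2+2\lfloor d/24\rfloor$), it is the unique such form, whose coefficient in degree $1+\lfloor d/24\rfloor$ is nonzero (otherwise it would vanish at the cusp to order $>d/24$ and be identically zero). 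Hence $L$ has vectors of squared length exactly $a_d$, so $L$ is extremal, giving the ``only if'' direction. Conversely, if $L\subset\r^{d}$ is even unimodular and extremal (so $8\mid d$), put $\Lambda=tL$ for $t>0$ small enough that the hypothesis on $\Lambda^{*}$ holds and set $r=\tfrac12 t\sqrt{a_d}$; then $P=\Lambda+rB_d$ avoids $A_d$ (the nonzero squared lengths of $L$ being even integers $\ge a_d$), one computes $\dens(P)=\vol(B_d)(\sqrt{a_d}/2)^{d}$, and $\tfrac{\sqrt{a_d}}{2r}\Lambda=L$, so equality holds.

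The genuinely hard input is Theorem~\ref{thm:theguy} itself: constructing $H_d$ and, above all, verifying its two sign conditions on the prescribed sets, which is where the restrictions $8\mid d$, $d\nequiv 16\pmod{24}$, $d\le 1200$ come from and where the proof becomes computer assisted in the spirit of \cite{CKMRV17} — the sign conditions cannot be deduced from positivity of Fourier coefficients because these functions genuinely change sign. Granting Theorem~\ref{thm:theguy}, the one delicate point in the deduction above is squeezing out enough of the zero set of $H_d$ to conclude that $L$ is \emph{even}: the avoidance condition by itself pins the squared lengths of $L$ down only below $l_d$, and beyond $l_d$ one must appeal to the ``$+1$-eigenfunction'' structure of $H_d$ — its double zeros at the radii $\sqrt{2n}$ — to exclude non-even squared lengths. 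The remaining ingredients (Poisson summation, the polarization argument for integrality, the modular-forms dimension count) are routine.
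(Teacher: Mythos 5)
Your proposal is correct and follows essentially the same route as the paper: Poisson summation over $\Lambda+Y-Y$ with the rescaled magic function of Theorem~\ref{thm:theguy}, the sign conditions (with the hypothesis on $\min\ell(\Lambda^*)$ neutralizing the region where $\ft H$ may be negative) giving the density bound, and the equality analysis yielding an even unimodular lattice whose minimal norm then forces extremality. The one substantive variation is in the equality case: you extract evenness of $L=\tfrac{\sqrt{a_d}}{2r}\Lambda$ from the \emph{primal} zero set of $H_d$ combined with $A_d$-avoidance (correctly noting that the unknown zeros of $H_d$ inside $[a_d,l_d]$ are harmless because admissible vectors avoid those radii), whereas the paper works on the \emph{dual} side, using $\{|x|^2:\ft H(x)=0,\ |x|^2>c_d\}=\{a_d,a_d+2,\dots\}$ to show the rescaled dual is even and unimodular and then dualizing — both are valid given Theorem~\ref{thm:theguy}, and your valence-formula digression merely re-proves the classical bound $(\min\ell)^2\le a_d$ that the paper simply cites.
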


Indeed,  Theorem \ref{thm:extremallattices1} can be seen as a particular case of Theorem \ref{thm:extremallattices2} since $c_{48}=0$ (hence we no longer need to assume that $P$ is periodic) and $A_{48}=(1,\sqrt{4/3})\cup (\sqrt{4/3},\sqrt{5/3})$, but the first interval can be removed because of a sign condition on the magic function of Theorem \ref{thm:theguy} . Theorem \ref{thm:extremallattices2} will follow from Theorem \ref{thm:LP-bounds} (new linear programming bounds), Theorem  \ref{thm:theguy} (magic contructions with  modular forms) and Theorem \ref{cor:theguy} (equivalent to Theorem \ref{thm:extremallattices2}). As in dimension $48$, one could reduce $A_d$ further for all $d$ by understanding the sign changes of the functions in Theorem \ref{thm:theguy}. However, there seems to be no particular interesting pattern, and the set $A_d$ would be rather complicated and given by a table other than a simple formula. Appealing to simplicity, we decided for the above form. The dimensions $d\equiv 16 \text{ mod } 24$ had to be excluded from our result since (for some unknown reason) the `magic' function we construct in these dimensions failS to satisfy some of the properties in Theorem \ref{thm:theguy}; for instance, their Fourier transform is nonpositive outside a neighborhood of the origin (although having positive mass). However, we believe it is possible to fix these issues if we impose more forbidden distances (see the more general Conjecture \ref{conj:evenlatt}). 

A classical result shows that extremal lattices may only exist up to dimension $d\leq 2\times 10^5$, but extending our results to such high dimensions seems out of reach with the present computational power on Earth, although we do believe they hold in all available dimensions (Conjecture \ref{conj:evenlatt}). Indeed, $d\leq 1200$ is an artifact of the computer-assisted part in this paper, but we believe it can be improved a little bit with cleverer/optimized algorithms.

Theorem \ref{thm:extremallattices2} puts forward a general framework and gives some kind of explanation to why one is only able to solve the sphere packing problem via linear programming in dimensions $8$ and $24$. We have now constructed a family of constrained problems, all amenable to linear programming methods and exact solutions via constructions with modular forms, which characterize extremal lattices as having optimal density among sphere packings avoiding certain distances. It is worth pointing out that the $E_8$ and Leech lattices are the only extremal latttices in dimensions $8$ and $24$, that $c_8=c_{24}=0$ and $A_8=A_{24}=\emptyset$. Hence, all the constraints we impose disappear in these dimensions, and we recover Viazovska's results. Curiously, the same set of (unscaled) distances  $\{\sqrt{m},\sqrt{m+1},...,\sqrt{n}\}$ appeared in a recent paper by Naslund on chromatic numbers of $\r^d$ \cite{N23}. Other remarks about our results are addressed in Section \ref{sec:main}.


\vspace{-2mm}
\section{Further main results}\label{sec:main}
We say that the set
$P=X+\bd$
is a sphere packing of $\r^d$ (associated to a set $X\subset \r^d$) if $|x-y|\in \{0\}\cup [1,\infty)$ for all $x,y\in X$, 
where $B_d:=\{y\in\r^d : |y|\leq 1\}$ is the unit ball and $|\cdot |$ is the Euclidean norm.  For a sphere packing $P$, we define its {density} by
\begin{align*}
    \dens(P):=\displaystyle\limsup_{t\rightarrow+\infty} \displaystyle \displaystyle\frac{\vol(P\cap tQ_d)}{\vol(tQ_d)},
\end{align*}
where $Q_d:=\left[-\frac{1}{2},\frac{1}{2}\right]^d$ is the unit cube. The setup is as follows. 
\smallskip

\noindent\fbox{%
  \begin{minipage}{\dimexpr\linewidth-2\fboxrule-2\fboxsep}%
\begin{prob}
Let $K\subset[1,+\infty)$ be a bounded subset such that $1\in K$. Consider
 the following family of sphere packings:
\begin{equation*}
    {\P}_d(K):=\left\{X+\bd : \forall x, y\in X\text{ we have } |x-y|\in \{0\}\cup K \cup  (\sup(K),\infty)\right\}.
\end{equation*}
The role of $K$ here is to prescribe the short distances between the centers of a sphere packing. We say a sphere packing $P$ is $K$-admissible if $P \in \P_d(K)$. What are the properties of $K$-admissible sphere packings $P$ that achieve maximal density? More precisely, if we let
 \begin{equation*}
     \Delta_d(K):=\sup_{P\in\pazocal{P}_d(K)} \dens(P),
 \end{equation*}
 we then want to study  $K$-admissible sphere packings $P$ such that $\dens(P)=\Delta_d(K)$. Alternatively, letting $A = (1,\sup(K)]\setminus K$, we then want to find a sphere packing of maximal density that avoids $A$; that is, no distance  between centers belongs to $A$.
\end{prob}
  \end{minipage}%
}

\vspace{5mm}
\noindent For now on, we will stick with the formulation using $K$ rather than $A$ (prescribing rather than forbidding), as it fits better our scheme of results and constructions. 

We now introduce some known facts about lattices.  A (full rank) lattice $\Lambda\subset \r^d$ is a discrete subgroup of $(\r^d,+)$ that contains $d$ linear independent vectors. We let 
$$
\ell(\Lambda):=\{|\la| : \la\in \Lambda\setminus\{0\}\}
$$
denote the lengths of $\Lambda$ and $\min \ell(\Lambda)$ denote its minimal norm. Given a lattice $\Lambda$ one can associate a sphere packing 
$$
P_\Lambda := \frac{1}{r}\Lambda + \bd \quad (\text{with } \ r=\min \ell(\Lambda))
$$
and show that $$\dens(\Lambda):=\dens(P_\Lambda)=\frac{\vol(\tfrac{r}{2}B_d)}{\vol(\r^d/\Lambda)}.$$ We say that a lattice $\Lambda$ is $K$-admissible if the packing $P_\Lambda$ above is $K$-admissible, that is, if
$$
\frac{\ell(\Lambda)}{\min \ell(\Lambda)} \subset K \cup (\sup(K),\infty).
$$
An even unimodular lattice $\Lambda$ is one such that $\vol(\r^d/\Lambda)=1$ and $\ell(\Lambda)\subset \{\sqrt{2n} : n\geq 1\}$ (such lattices are integral and self-dual). These lattices have been widely studied and classified in the literature. It is known that they can only exist in dimensions multiple of $8$ and, due to a classical theorem of Voronoi, that there are only finitely many of them in each dimension (modulo symmetries). It is known that (see \cite[p. 194, Cor. 21]{CS})
$$
(\min \ell(\Lambda))^2 \leq a_d := 2\bigg\lfloor \frac{d}{24} \bigg\rfloor +2.
$$
An even unimodular lattice attaining the above bound is called extremal . The $E_8$, $E_8^2, D_{16}^+$ and Leech lattices are the only even unimodular extremal lattices up to dimension $24$.
In dimensions $32$ and $40$, there are more than $10^7$ and $10^{51}$ of such lattices, respectively; however, in dimension $48$, there are (so far) only $4$ known lattices: $P_{48p},P_{48q},P_{48m}$ and $P_{48n}$ (see Nebe \cite{N14}). Moreover, it is known that extremal lattices cannot exist in sufficiently large dimensions \cite{MOS75} (as modular forms with several vanishing Fourier coefficients and very large weight necessarily have negative coefficients). The current best bound is due to Jenkins and Rouse \cite{JR11}, and it states that
$$
d_{\max} := {\sup}\{\text{rank}(\Lambda) : \Lambda \text{ is an even unimodular extremal lattice} \}\leq 163264.
$$
Indeed, one can show that for any $\be>0$, there exists $D$ such that there is no even unimodular lattice of rank $d > D$ and minimal squared norm larger than $a_{d}-\be$. For more information on extremal lattices, see \cite{SS99}.

We now state three other main results of this paper.  The first is an analogue of the Cohn and Elkies linear programming bound for $\Delta_d(K)$.
 
\begin{theorem}\label{thm:LP-bounds}
Let $K\subset[1,+\infty)$ be bounded and such that $1\in K$.
Define $$\Delta_d^{\LP}(K):=\vol \left( \bd\right)\inf\displaystyle\frac{F(0)}{\ft{F}(0)},$$ where the infimum is taken over all nonzero functions $F\in L^1(\r^d) \cap C(\r^d)$ such that :
\begin{align*}
F(x)  \leq 0 \ \text{ for }\ |x|\in K\cup(\sup(K),\infty) \quad \text{and} \quad
\ft{F}(x)  \geq 0  \text{ for all } x.
\end{align*}
Then
\begin{equation*}
    \Delta_d(K)\leq\Delta_d^{\LP}(K).
\end{equation*}
\end{theorem}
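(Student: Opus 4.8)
The plan is to transcribe the Cohn--Elkies linear programming argument \cite{CE03}, keeping careful track of which distances actually occur between the centers of a $K$-admissible packing. First I would reduce to \emph{periodic} packings: exactly as in the unconstrained case, periodic $K$-admissible packings approach the supremal density $\Delta_d(K)$, and this reduction is the one place where one must check that the constraints survive. Given $P=X+\bd\in\P_d(K)$ and $\ep>0$, choose a large cube $tQ_d$ on which the relative volume of $P$ is within $\ep$ of $\dens(P)$, discard every sphere not entirely contained in $tQ_d$ (which changes the relative volume by $O(1/t)$), and repeat the remaining finite configuration periodically along the lattice $(t+2\sup(K))\z^d$. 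Two centers lying in the same translate are at a distance already realized in $P$, hence in $\{0\}\cup K\cup(\sup(K),\infty)$, while two centers lying in different translates are at distance larger than $\sup(K)$; thus the new packing is again $K$-admissible, and its density tends to $\dens(P)$ as $t\to\infty$. So it suffices to show $\dens(P)\le\Delta^{\LP}_d(K)$ for periodic $P$.

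Fix then a periodic $K$-admissible packing with centers $X=\Lambda+\{v_1,\dots,v_N\}$, where $\Lambda$ has covolume $V:=\vol(\r^d/\Lambda)$ and $v_1,\dots,v_N$ are distinct modulo $\Lambda$, so that $\dens(P)=\vol(\bd)\,N/V$; let $F$ be any admissible test function, which we may assume satisfies $\ft F(0)>0$. The heart of the matter is the double sum
\[
 S:=\frac1N\sum_{i,j=1}^N\ \sum_{\la\in\Lambda}F(v_i-v_j+\la),
\]
evaluated in two ways. Summing termwise, the $N$ contributions with $v_i-v_j+\la=0$ add up to $F(0)$, while every other term equals $F(x-y)$ for a pair of distinct centers $x,y\in X$; by $K$-admissibility $|x-y|\in K\cup(\sup(K),\infty)$, so each such term is $\le 0$, whence $S\le F(0)$. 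On the other hand, Poisson summation gives
\[
 S=\frac1{NV}\sum_{\mu\in\Lambda^*}\ft F(\mu)\,\Bigl|\sum_{i=1}^N e^{2\pi i\langle\mu,v_i\rangle}\Bigr|^2\ \ge\ \frac{N\,\ft F(0)}{V},
\]
where we kept only the $\mu=0$ term and used $\ft F\ge 0$. Comparing the two estimates gives $N/V\le F(0)/\ft F(0)$, hence $\dens(P)=\vol(\bd)\,N/V\le\vol(\bd)\,F(0)/\ft F(0)$; taking the infimum over $F$ gives $\dens(P)\le\Delta^{\LP}_d(K)$ for periodic $P$, and combined with the reduction above this yields $\Delta_d(K)\le\Delta^{\LP}_d(K)$.

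I do not expect a serious obstacle: the argument is essentially a dictionary translation of \cite{CE03}, and the one genuinely new ingredient is the observation in the first paragraph that enlarging the period of a $K$-admissible configuration can only create \emph{large} separations, so periodicity is available at no cost. The only technical nuisance, already present in \cite{CE03}, is the validity of Poisson summation for test functions of minimal regularity; here one first notes that $\ft F\in L^1$ and $F(0)=\int_{\r^d}\ft F$ (a consequence of $F\in C(\r^d)\cap L^1(\r^d)$ and $\ft F\ge 0$, via a Gaussian approximation), so that $F$ is in particular bounded, and then one reduces, by a routine approximation, to admissible functions with polynomial decay, for which Poisson summation is classical --- this step does not interact with the geometry at all. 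The conceptual point worth isolating is that $K$-admissibility is \emph{precisely} the statement that every difference $x-y$ of distinct centers has length in $K\cup(\sup(K),\infty)$, which is exactly the set on which $F$ is required to be nonpositive; this is why the relaxed sign hypothesis on $F$ is enough, and why the distances in $A=(1,\sup(K)]\setminus K$ may be forbidden for free.
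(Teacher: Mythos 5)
Your outline matches the paper's proof up to the last step: the reduction to periodic packings (with the check that periodization only creates large separations), the two-sided evaluation of the double sum, and Poisson summation for well-behaved $F$ are exactly what the paper does. The gap is in your final paragraph, where you assert that passing from rapidly decaying test functions to general $F\in C(\r^d)\cap L^1(\r^d)$ is ``a routine approximation'' that ``does not interact with the geometry at all.'' This is precisely the point the paper singles out as \emph{not} routine. The standard Cohn--Elkies approximation replaces $F$ by $F\star g_\ep$ for a mollifier with nonnegative Fourier transform; this preserves $\ft F\ge 0$ but smears the region where the sign condition holds, and in the unconstrained problem one repairs the damage by a slight rescaling because the constraint set $[1,\infty)$ is stable under dilation and under $\ep$-thickening. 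Here the constraint set $K\cup(\sup(K),\infty)$ may be a finite set of radii plus a tail --- exactly the case needed for the paper's main applications --- and neither convolution nor rescaling preserves nonpositivity on such a set: $F\star g_\ep$ picks up contributions from radii just off $K$ where $F$ may be positive, while multiplying $F$ by a cutoff instead destroys $\ft F\ge 0$. Moreover, for $F$ merely continuous and integrable the lattice sum $\sum_{\la\in\Lambda}F(v_i-v_j+\la)$ in your quantity $S$ need not converge, so the termwise evaluation of $S$ is not even well defined a priori.

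The paper circumvents this with a different device: it periodizes $F$ to $f_\Lambda\in L^1(\r^d/\Lambda)$ and convolves \emph{on the torus} with the Fej\'er kernel $\F_N$, which is supported in a fundamental cell and has nonnegative Fourier coefficients. The convolved quantity is a trigonometric polynomial, hence bounded below by its zero-frequency term $(\#Y)^2\ft F(0)/\vol(\r^d/\Lambda)$; from above, only finitely many lattice translates $|\la|\le m$ need be kept (the others are discarded using $F\le 0$ beyond $\sup(K)$), and for those finitely many terms $\F_N\star(F\circ A)\to F\circ A$ pointwise using only continuity and boundedness of $F$. To complete your write-up you should either supply an argument of this kind or restrict the class of test functions in the statement; as written, the approximation step is the one place where the constrained problem genuinely differs from \cite{CE03}, and it is the step your proposal leaves open.
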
 

\begin{theorem}\label{thm:theguy}
Let $8\leq d\leq 1200$, where $d$ is divisible by $8$ but $d\nequiv 16 \mod 24$. Define $l_d$ as in \eqref{eq:def_a_d_l_d} and 
$$
K_d = \frac{1}{\sqrt{a_d}}\{\sqrt{a_d},\sqrt{a_d+2},\sqrt{a_d+4},...,\sqrt{l_d}\}.
$$
Also let $c_d$ be given by Table \ref{table:cd} ($c_d=0$ if $d=8,24, 48$). Then there exists a nonzero radial function $H:\r^d\to \r$ of Schwartz class such that:
\begin{itemize}
\item $H(x)\leq 0$ if $|x|^2>l_d$;
\item $\ft H(x)\geq 0$ if $|x| > c_d$;
\item $H(x)=\ft H(x)=0$ if $|x|^2 \in \{a_d,a_d+2,...\}$;
\item $\{|x|^2 : H(x)=0 \text{ and } |x|^2 > l_d^-\}=\{l_d,l_d+2,...\}$;
\item $\{|x|^2 : \ft H(x)=0 \text{ and } |x|^2>c_d\}=\{a_d,a_d+2,...\}$
\end{itemize}
Moreover, if $d=48$ we additionally have that $\{|x|^2 : H(x) < 0\} \cap (0,10) =(6,8)$.
\end{theorem}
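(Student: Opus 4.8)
The plan is to adapt Viazovska's modular-form bootstrap \cite{Vi17,CKMRV17} to the whole family of admissible dimensions at once. I would look for $H$ in the form $H=\alpha\,g_{+}+\beta\,g_{-}$ with $\alpha,\beta\in\r$, where $g_{\pm}$ are radial Schwartz functions with $\widehat{g_{\pm}}=\pm\,g_{\pm}$, each realized as a Gaussian superposition
$$
g_{\pm}(x)\;=\;\int_{\mathcal C}\varphi_{\pm}(\tau)\,e^{\pi i \tau |x|^2}\,\mathrm{d}\tau,
$$
where $\varphi_{\pm}$ is a weakly holomorphic (quasi-)modular form of a suitable negative weight depending only on $d$ (of the order of $2-\tfrac d2$) for a fixed congruence subgroup $\Gamma$ of $\mathrm{SL}_2(\z)$ --- in Viazovska's normalization the theta group --- and $\mathcal C$ is a contour in the upper half plane joining cusps, chosen so that the transformation law of $\varphi_{\pm}$ under $\tau\mapsto-1/\tau$ turns, after deformation of $\mathcal C$, into the Fourier identity $\widehat{g_{\pm}}=\pm g_{\pm}$. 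The space of forms $\varphi_{\pm}$ with prescribed principal parts at the cusps of $\Gamma$ is finite dimensional (valence formula), and the construction consists in choosing these principal parts so that: the $q$-expansion of $\varphi_{\pm}$ at $i\infty$ has the order of vanishing that both makes $g_{\pm}$ Schwartz and produces the double zeros of $H$ at $|x|^2\in\{a_d,a_d+2,\dots\}$; a matching vanishing at the other cusp(s) produces the double zeros of $\widehat H$ there; and the remaining leading Laurent coefficients are tuned so that no further cancellation occurs, making the zero set of $H$ beyond $l_d$, and of $\widehat H$ beyond $a_d$, exactly the stated progressions. The integers $l_d$, and the table entries $c_d$, are precisely the values for which this linear system over $\Gamma$-modular forms is consistent and outputs a nonzero $H$; concretely one sets up and solves the linear algebra for each $d$ in a computer algebra system and records $c_d$ as the smallest radius beyond which the resulting $\widehat H$ comes out nonnegative. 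With $H$ so defined, everything in the bullet list \emph{except} the two sign inequalities --- the Schwartz property, the Fourier/eigenfunction relations, and all vanishing statements --- follows from contour deformation and the modular transformation laws, exactly as in \cite{Vi17,CKMRV17}.

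The real content is the pair of sign conditions $H(x)\le 0$ for $|x|^2>l_d$ and $\widehat H(x)\ge 0$ for $|x|>c_d$, plus the strictness making the zero sets beyond $l_d$ and $c_d$ exactly the prescribed progressions. Dividing out the elementary factor carrying the double zeros (essentially $\sin^2(\pi|x|^2/2)$, up to explicit cusp contributions), each inequality reduces to proving that a single one-variable function of the shape $r\mapsto\int_0^{\infty}\Psi(it)\,t^{m}e^{-\pi r^2 t}\,\mathrm{d}t$, plus an exponentially small correction, has constant sign on a half-line, where $\Psi$ is an explicit real-analytic combination of the modular forms above. In dimensions $8$ and $24$ the integrand $\Psi(it)$ is of one sign --- its $q$-expansion is termwise nonnegative --- so the conclusion is immediate; the essential new difficulty is that here $\Psi$ genuinely changes sign, so no coefficientwise argument is available. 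Following the treatment of the $24$-dimensional function in \cite{CKMRV17}, I would proceed in two steps: \emph{(a)} from the Fourier expansions of $\Psi$ at the two relevant cusps, derive an asymptotic expansion of $g_{\pm}(r)$ --- hence of $H(r)$ and $\widehat H(r)$ --- as $r\to\infty$ with a completely effective remainder bound, reducing the sign check on $[R,\infty)$ for an explicit $R$ to a finite numerical inequality; \emph{(b)} verify the sign on the remaining compact interval by rigorous interval arithmetic, splitting the Laplace-type integral into a main part controlled by a Taylor expansion with explicit error term and an exponentially small tail. Step \emph{(b)} is the unavoidably computer-assisted part, and $d\le 1200$ is exactly the range in which the implementation certifies the estimates; I expect \emph{(b)} --- certifying a sign-\emph{changing} modular integral rigorously and at scale --- to be the main obstacle, the rest being careful asymptotics and finite linear algebra.

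Finally, for $d=48$ one has $a_{48}=6$, $l_{48}=10$, $c_{48}=0$, and the extra assertion $\{|x|^2:H(x)<0\}\cap(0,10)=(6,8)$ comes from running the same interval-arithmetic engine on the single compact window $|x|^2\in[0,10]$ and combining it with the structural facts already proved: within $(0,10)$, $H$ vanishes only at $|x|^2\in\{6,8\}$, changes sign at each, and is positive on $(0,6)\cup(8,10)$. This refinement is exactly what lets one delete the interval $(1,\sqrt{4/3})$ from the forbidden set of Theorem \ref{thm:extremallattices1}: under the rescaling by $\sqrt 6$ that interval becomes $|x|^2\in(6,8)$, and for the enlarged admissible family the linear programming bound of Theorem \ref{thm:LP-bounds} requires $H\le 0$ precisely on $[6,8]\cup[10,\infty)$, which the extra assertion supplies.
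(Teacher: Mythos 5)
Your overall strategy is the same as the paper's: build $H$ as a combination of a $+1$ and a $-1$ Fourier eigenfunction, each a Laplace/Gaussian superposition of a weakly holomorphic (quasi)modular form of weight about $2-\tfrac d2$ (the paper uses $M_k(\Gamma(2))/\Delta^{l/2}$ for the $-1$ eigenfunction and depth-$2$ quasimodular forms for $\mathrm{SL}_2(\z)$ for the $+1$ eigenfunction, both multiplied by the prefactor $\sin^2(\tfrac\pi2|x|^2)$ exactly as you guess), pin down the forms by finite-dimensional linear algebra on the principal parts, get all vanishing and eigenfunction statements from the transformation laws, and certify the two sign conditions by a computer-assisted argument with effective tail bounds. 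The paper certifies positivity with exact rational arithmetic plus Sturm's method on truncated $q$-expansions rather than interval arithmetic, but that is a difference of tooling, not of mathematics.

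There is, however, one concrete step where your plan as written would fail. You reduce both sign conditions to showing that a function of the form $s\mapsto\int_0^\infty\Psi(it)t^m e^{-\pi s t}\,\d t$ (plus a small correction) keeps its sign on a half-line. For $\widehat H$ the half-line is $s=|x|^2>c_d$, but after splitting at $t=1$ and applying modularity, the resulting Laplace integral over $t\in[1,\infty)$ only converges for $s>a_d-2$, because the integrand contains the principal part of $\Delta^{-l/2}$ and grows like $e^{(a_d-2)\pi t}$. Since $c_d<a_d-2$ in all relevant cases (e.g.\ $c_{48}=0$ while $a_{48}-2=4$), the representation you propose to certify simply does not exist on $(c_d,a_d-2)$, and your fallback of interval arithmetic on a compact window still presupposes a convergent Laplace-type integral there. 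The paper's Step 8 supplies the missing idea: one separates the finitely many divergent exponentials, continues the corresponding integral meromorphically in $s$ (the apparent poles at $s=0,2,\dots,a_d-2$ must cancel since $\widehat H$ is entire in $s$), bounds the result from below by an explicit rational function $Q(s)$ via a ``rationalization'' of $\pi$ and $e^{-\pi}$, and then runs Sturm's method on $Q_{\rm num}$ over $(c_d,a_d-2)$. Your sketch needs an analogous continuation-plus-minorant device (or a genuinely different convergent representation of $\widehat H$ near the origin) before step (b) can be carried out; everything else, including the $d=48$ refinement on the window $(0,10)$, matches the paper's argument.
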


In the theorem above, $|x|^2>l_d^-$ means that $|x|^2>l_d-\epsilon_d$ for some small $\epsilon_d>0$. We note that one can indeed build functions $H$ for all dimensions congruent to $16$ modulo $24$ using the same techniques of Theorem \ref{thm:theguy}. However it turns out that $\ft H(x)\leq 0$ for $|x|>o(a_d)$ (numerically), although $H(x)\leq 0$ for $|x|^2>l_d$ and $H(0)=\ft H(0)>0$. One should also notice that the numbers $c_d$ seem to satisfy (for small $d$)
$$
c_d=a_d-2-O(1) \ \text{ if } d\equiv 8 \text{ mod } 24 \quad \text{and} \quad c_d= a_d-6-O(1) \ \text{ if }d\equiv 0  \text{ mod } 24.
$$
Also, in fact, $c_d$ is an approximation from the right of the last simple root of $\ft H(x)$. All these facts give a heuristic explanation why we only get results free from spectral conditions in dimensions $8,24$ and $48$ (hence a result for all sphere packings, periodic or not). It goes as follows: Experimentally, the $O(1)$ in $c_d$ is less than $1$ for small $d$ and $c_d$ increases with $d$ on each equivalence class modulo $24$, which means that if $d\geq 72$, then $a_d\geq 8$, and so $c_d\geq 1$. Thus $\ft H$ would never be nonnegative. For $d=8,24,48$, we have $c_d=0-O(1)<0$. Thus $\ft H\geq 0$ (see Figure \ref{fig:1}). For the remaining small dimensions not equal to $16$ modulo $24$, which are, $d=32$ and $d=56$, we have $c_{32}=2-O(1)$ and $c_{56}=4-O(1)$, which are positive, and so $\ft H$ is not nonnegative.

\begin{table}[h]
\begin{tabular}{cccccccccc}
\hline
$0$ & $\infty$ & $0$ & $1.5880$ & $\infty$ & $0$ & $3.5850$ & $\infty$ & $1.4710$ & $5.5790$ \\%
$\infty$ & $3.3760$ & $7.5720$ & $\infty$ & $5.1550$ & $9.5630$ & $\infty$ & $5.4650$ & $11.554$ & $\infty$ \\ %
$7.4160$ & $13.543$ & $\infty$ & $9.3400$ & $15.530$ & $\infty$ & $11.214$ & $17.514$ & $\infty$ & $11.462$ \\ %
$19.495$ & $\infty$ & $13.429$ & $21.469$ & $\infty$ & $15.384$ & $23.426$ & $\infty$ & $17.323$ & $23.537$ \\ %
$\infty$ & $19.234$ & $25.533$ & $\infty$ & $19.458$ & $27.530$ & $\infty$ & $21.433$ & $29.558$ & $\infty$ \\ %
$23.403$ & $31.519$ & $\infty$ & $25.358$ & $33.515$ & $\infty$ & $27.305$ & $37.454$ & $\infty$ & $29.244$ \\ %
$39.891$ & $\infty$ & $31.483$ & $42.000$ & $\infty$ & $39.277$ & $44$ & $\infty$ & $46$ & $46$ \\ %
$\infty$ & $48$ & $48$ & $\infty$ & $50$ & $50$ & $\infty$ & $52$ & $52$ & $\infty$ \\ %
$54$ & $54$ & $\infty$ & $56$ & $56$ & $\infty$ & $58$ & $58$ & $\infty$ & $60$ \\ %
$60$ & $\infty$ & $62$ & $62$ & $\infty$ & $64$ & $64$ & $\infty$ & $66$ & $66$ \\ %
$\infty$ & $68$ & $68$ & $\infty$ & $70$ & $70$ & $\infty$ & $72$ & $72$ & $\infty$ \\ %
$74$ & $74$ & $\infty$ & $76$ & $76$ & $\infty$ & $78$ & $78$ & $\infty$ & $80$ \\ %
$80$ & $\infty$ & $82$ & $82$ & $\infty$ & $84$ & $84$ & $\infty$ & $86$ & $86$ \\ %
$\infty$ & $88$ & $88$ & $\infty$ & $90$ & $90$ & $\infty$ & $92$ & $92$ & $\infty$ \\ %
$94$ & $94$ & $\infty$ & $96$ & $96$ & $\infty$ & $98$ & $98$ & $\infty$ & $100$ \\ \hline
\end{tabular}
\bigskip
\caption{Values of $c_d$ for $d=8,16,24,...,1200$. One should read it left to right top to bottom.  From dimension $d=536$ onwards, computational time was too high, and we simply took $c_d=a_d-2$, which can be verified much faster. For $d<536$, the numbers ${c_d}$ give a good rational approximation of the last sign change of the function $s\mapsto \ft H(\sqrt{s})$ from Theorem \ref{thm:theguy}. These numbers can also be found in the ancillary file \emph{cnumbers} on the arXiv submission of this paper (arXiv.org:2308.03925)}\label{table:cd}
\end{table}
\vspace{-0mm}

\begin{theorem}\label{cor:theguy}
Let $d$, $a_d$, $K_d$ and $c_d$ be as in Theorem \ref{thm:theguy}. Let $P=\Lambda+Y+\bd$ be some $K_{d}$-admissible periodic sphere packing such that $\min \ell(\Lambda^*)> \sqrt{c_d}$. Then $$\dens(P)\leq \vol \left( B_d\right) \left( \frac{\sqrt{a_d}}{2}\right)^d.$$  In case $\#Y=1$, equality above occurs if and only if $\sqrt{a_d}\Lambda$ is an even unimodular extremal lattice. We conclude that if $d\in\{8,24,48\}$, then
$$
\Delta_d(K_d) = \Delta_d^{\LP}(K_d) = \vol \left( B_d\right) \left( \frac{\sqrt{a_d}}{2}\right)^d.
$$ 
\end{theorem}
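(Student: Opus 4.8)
The plan is to turn the function $H$ of Theorem \ref{thm:theguy} into an admissible test function for a periodic version of the Cohn--Elkies-type bound, run Poisson summation, and read the equality case off the zero sets of $H$ and $\widehat H$. Put $F(x):=H(\sqrt{a_d}\,x)$, so $\widehat F(\xi)=a_d^{-d/2}\widehat H(\xi/\sqrt{a_d})$. From Theorem \ref{thm:theguy} (and Table \ref{table:cd}) one records: $F$ vanishes on each sphere $|x|=k$ with $k\in K_d$, since then $a_d|x|^2\in\{a_d,a_d+2,\dots,l_d\}$; $F(x)\le 0$ whenever $|x|>\sup K_d$, since then $a_d|x|^2>l_d$; $\widehat F(\xi)\ge 0$ whenever $|\xi|^2>c_d$; and, using $H(0)=\widehat H(0)>0$ from the construction, $F(0)=H(0)>0$ with $F(0)/\widehat F(0)=a_d^{d/2}$.

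Let $P=\Lambda+Y+\bd$ be $K_d$-admissible and periodic, $X:=\Lambda+Y$, $N:=\#Y$. As $H$ (hence $F$) is Schwartz, Poisson summation gives
\[
\sum_{\lambda\in\Lambda}\sum_{y_1,y_2\in Y}F(\lambda+y_1-y_2)=\frac{1}{\vol(\r^d/\Lambda)}\sum_{\xi\in\Lambda^*}\widehat F(\xi)\,\Big|\sum_{y\in Y}e^{2\pi i\langle\xi,y\rangle}\Big|^2 .
\]
On the left, the argument vanishes only for $\lambda=0,\ y_1=y_2$ (as $Y$ injects into $\r^d/\Lambda$), contributing $NF(0)$; every other argument is a nonzero vector of $X-X$, whose length lies in $K_d\cup(\sup K_d,\infty)$, where $F\le 0$ — so the left side is $\le NF(0)$. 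On the right, the $\xi=0$ term is $N^2\widehat F(0)/\vol(\r^d/\Lambda)$, and for $\xi\in\Lambda^*\setminus\{0\}$ the hypothesis $\min\ell(\Lambda^*)>\sqrt{c_d}$ gives $|\xi|^2>c_d$, hence $\widehat F(\xi)\ge 0$ — so the right side is $\ge N^2\widehat F(0)/\vol(\r^d/\Lambda)$. Comparing and dividing by $F(0)>0$ yields $\vol(\r^d/\Lambda)\ge N\,a_d^{-d/2}$, hence
\[
\dens(P)=\frac{N\vol(\bd)}{\vol(\r^d/\Lambda)}\le a_d^{d/2}\vol(\bd)=\vol(B_d)\Big(\tfrac{\sqrt{a_d}}{2}\Big)^{d},
\]
the asserted bound.

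Now take $\#Y=1$ (so $X=\Lambda$ is a lattice) and suppose $\dens(P)$ attains the bound; then both inequalities above are equalities. Density equality forces $\vol(\r^d/\Lambda)=a_d^{-d/2}$, i.e.\ $L:=\sqrt{a_d}\,\Lambda$ has $\vol(\r^d/L)=1$. Since $F(v)=H(\sqrt{a_d}v)$ and $\widehat F(\xi)=a_d^{-d/2}\widehat H(\xi/\sqrt{a_d})$, equality in the physical and spectral sums says, respectively, $H(w)=0$ for all $w\in L\setminus\{0\}$ and $\widehat H(u)=0$ for all $u\in L^*\setminus\{0\}$, where $L^*=\tfrac1{\sqrt{a_d}}\Lambda^*$. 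Because $\min\ell(L^*)=\tfrac1{\sqrt{a_d}}\min\ell(\Lambda^*)$ exceeds the threshold above which the only zeros of $\widehat H$ are those with $|x|^2\in\{a_d,a_d+2,\dots\}$ (Theorem \ref{thm:theguy}), we get $\ell(L^*)^2\subseteq\{a_d,a_d+2,\dots\}$; thus $L^*$ is even, hence integral, hence $L^*\subseteq L$, and with $\vol(\r^d/L)=\vol(\r^d/L^*)=1$ this forces $L=L^*$. So $L$ is even and unimodular with $\min\ell(L)^2\ge a_d$, hence extremal by the bound $\min\ell(L)^2\le a_d$ for even unimodular lattices (and the physical-side equality $H=0$ on $L\setminus\{0\}$ is then automatic, since $\ell(L)^2\subseteq\{a_d,a_d+2,\dots\}$). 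Conversely, if $\sqrt{a_d}\,\Lambda$ is even unimodular extremal then $\Lambda$ has minimal norm $1$, its squared lengths equal $1+2j/a_d$ ($j\ge 0$) and so lie in $K_d^2$ or beyond $(\sup K_d)^2$, making $P_\Lambda$ $K_d$-admissible, while $\min\ell(\Lambda^*)=a_d>\sqrt{c_d}$ and $\dens(P_\Lambda)=\vol(\bd)\,a_d^{d/2}=\vol(B_d)(\sqrt{a_d}/2)^d$; equality holds.

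Finally, for $d\in\{8,24,48\}$ we have $c_d=0$, so the properties recorded above give $\widehat F\ge 0$ everywhere; then $F$ is admissible in Theorem \ref{thm:LP-bounds}, which forces $\Delta_d(K_d)\le\Delta_d^{\LP}(K_d)\le\vol(\bd)\,F(0)/\widehat F(0)=\vol(B_d)(\sqrt{a_d}/2)^d$ without any periodicity hypothesis, and the even unimodular extremal lattices available in those dimensions ($E_8$, Leech, $P_{48p},\dots$) furnish $K_d$-admissible packings meeting this value, so all three quantities coincide. The substantial work in this whole circle of results is the construction of $H$ in Theorem \ref{thm:theguy} and the verification of its sign pattern; inside the present argument the one point needing care is the equality analysis, where the spectral-gap hypothesis $\min\ell(\Lambda^*)>\sqrt{c_d}$ is exactly what keeps the short vectors of $L^*$ clear of the sub-$c_d$ sign change of $\widehat H$ and thereby forces $\ell(L^*)^2\subseteq\{a_d,a_d+2,\dots\}$.
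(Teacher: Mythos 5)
Your proof is correct and follows essentially the same route as the paper's: Poisson summation over $\Lambda+Y-Y$ with $F(x)=H(\sqrt{a_d}\,x)$, the sign conditions of Theorem \ref{thm:theguy} controlling the two sides of the identity, and the zero sets of $H$ and $\widehat H$ driving the equality analysis, with the $c_d=0$ cases feeding directly into Theorem \ref{thm:LP-bounds}. The only (immaterial) difference is how you conclude unimodularity in the equality case — you use that $L^*$ integral implies $L^*\subseteq L$ and then compare covolumes, whereas the paper notes that an integral lattice has covolume $\sqrt{N}$ for an integer $N$ and forces $N=1$; both are standard and equivalent.
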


\begin{figure}[h]
\begin{tikzpicture}
\begin{axis}[
scale=1.5,
xtick={0,2,4,6,8,10},
ytick={1},
ymin=0,
ymax=10.5,
xmax=10,
xmin=0,
]
\addplot[
  solid,
]
table{H8};
\addplot[
  solid,
  blue,
]
table{H24};
\addplot[
  solid,
  red,
]
table{H48};
\end{axis}
\end{tikzpicture}
\caption{This is a plot of the functions $s\mapsto \ft H(\sqrt{s})e^{\pi s}$ for $d=8$ (black), $d=24$ (blue) and $d=48$ (red), normalized so $\ft H(0)=1$.}\label{fig:1}
\end{figure}
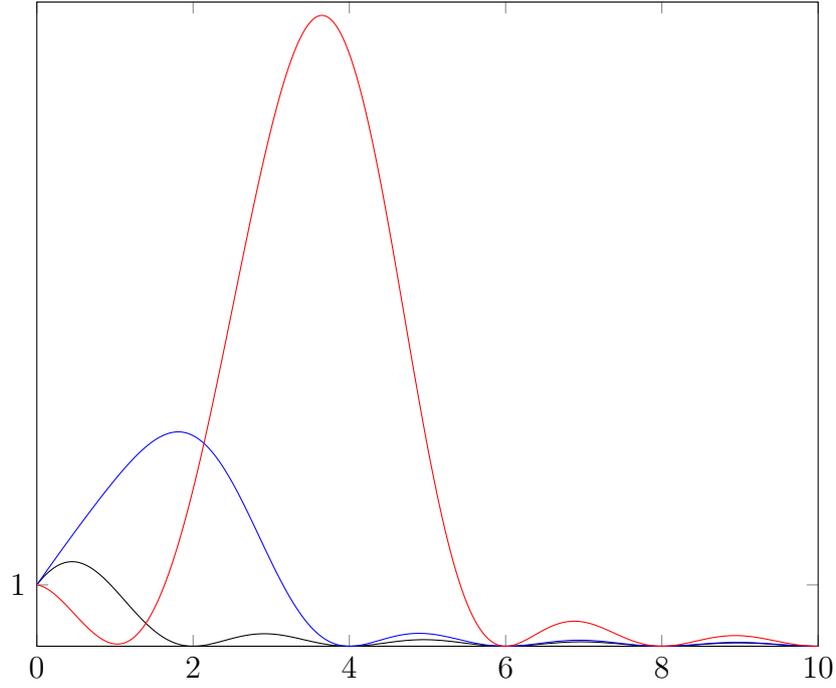

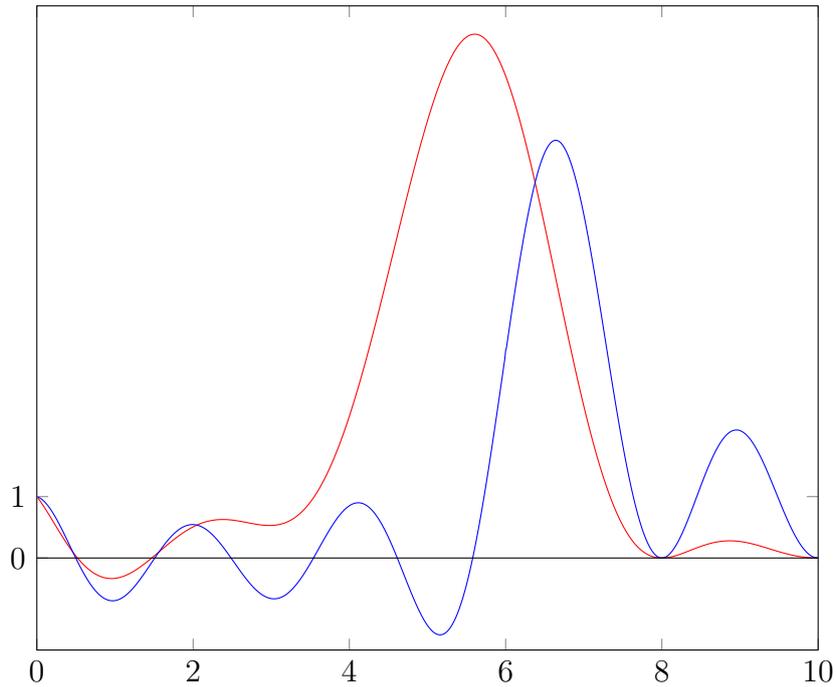
\begin{figure}[h]
\begin{tikzpicture}
\begin{axis}[
scale=1.5,
xtick={0,2,4,6,8,10},
ytick={0,1},
ymin=-1.5,
ymax=9,
xmax=10,
xmin=0,
]
\addplot[
  solid,
  red,
]
table{H72};
\addplot[
  solid,
  blue,
]
table{H80};
\addplot[mark=dot] coordinates {(0,0) (10,0)};
\end{axis}
\end{tikzpicture}
\caption{This is a plot of the functions $s\mapsto \ft H(\sqrt{s})e^{\pi s}$ for $d=72$ (red) and $d=80$ (blue), normalized so $\ft H(0)=1$. For $d=80$, we have multiplied the function by $(s+1)^2$ for aesthetic reasons.}\label{fig:2}
\end{figure}

Note that if $d\in \{8,24,48\}$, then $c_d=0$, and this shows that even unimodular extremal lattices maximize density for the among all $K_d$-admissible sphere packings (periodic or not). Since $K_8=K_{24}=\{1\}$, the packing problem in these dimensions is unconstrained, and the $E_8$ and Leech lattices are the only extremal lattices in these dimensions, this shows they have maximal density. The above theorem puts the results of \cite{CKMRV17,Vi17} in a larger family of packing problems that can be solved by linear programming methods and construction via modular forms. The fact that for $d=48$ we have $H(x) < 0$ for $6<|x|^2 < 8$, allows us to enlarge $K_{48}$ to $\frac1{\sqrt{6}}([\sqrt{6},\sqrt{8}]\cup {\sqrt{10}})$ and deduce Theorem \ref{thm:extremallattices1}. The same enlargement is possible in every dimension $d$; that is, we could fill $K_d$ between a couple of its points and prove a slightly stronger result. However, for simplicity, we left the statement as it is.

One could ask if it is possible to extend Theorems \ref{thm:theguy} and \ref{cor:theguy} to all dimensions $d\leq 163264$. For this, one would need to greatly optimize the numerical procedure we explain in the proof of Theorem \ref{thm:theguy}, and use specialized software and several days of running time to increase $1200$ to something of the order of $10000$. Rough experimental estimations show that the running time we have for the proof of Theorem \ref{thm:theguy} is roughly $O(1.1^{d/8})$-secs, so it seems the complexity of our algorithm is increasing exponentially. Even if one manages to reduce this ratio to (being generous) $1.001$,  reaching $d\approx 170000$ seems unreasonable. 

One could also ask if we could prove Theorem \ref{cor:theguy} with no assumption on the minimal norm on $\Lambda^*$. That might be possible, but we believe it to be impossible via the linear programming approach that we use with the same set $K_d$. The functions $H$ computed in Theorem \ref{thm:theguy} are in a way unique, and one could actually show they are by extending the interpolation formulas of \cite{CKMRV21} to all dimensions multiple of $8$. The issue is that the functions $\ft H(x)$ of Theorem \ref{thm:theguy} do have a simple zero very near $|x|^2=c_d$ and so have negative values in the region $0<|x|^2<c_d$.  However, it might be possible to remove the condition on $\Lambda^*$ by enlarging $l_d$ and finding the corresponding `magic' functions. We have tried this approach in small dimensions, replacing $l_d$ by $l_d+\delta$, for some small even $\delta>0$, although unsuccessfully. It might be the case that $\delta$ needs to be very large; however, that greatly complicates the modular form constructions. We leave this question for future work. Nevertheless, we expect this $\delta$ to exist because when $\delta=\infty$, the only $K_d$-admissible lattices  with minimal norm $\sqrt{a_d}$ are integral even lattices, and such lattices are less dense than extremal ones.

\begin{conjecture}\label{conj:evenlatt}
Let $\Lambda\subset \r^d$ be an even unimodular lattice with minimal norm $\sqrt{a}$, for some even integer $a$. Then for some even $l>a$, we have that $\Lambda$ has maximal density among any $\frac1{\sqrt{a}}\{\sqrt{a},\sqrt{a+2},...,\sqrt{l}\}$-admissible sphere packing; that is, 
$$
\Delta_d\left(\frac1{\sqrt{a}}\{\sqrt{a},\sqrt{a+2},...,\sqrt{l}\}\right)  = \dens(\Lambda).
$$
\end{conjecture}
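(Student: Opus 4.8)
The plan is to prove Conjecture \ref{conj:evenlatt} along the lines of Theorems \ref{thm:extremallattices1}, \ref{thm:extremallattices2} and \ref{cor:theguy}: feed the linear programming bound of Theorem \ref{thm:LP-bounds} with a ``magic function'' tailored to $\Lambda$, but now demand that its Fourier transform be nonnegative on \emph{all} of $\r^d$, so that the conclusion holds for every packing with no spectral hypothesis. Fix an even unimodular $\Lambda\subset\r^d$ with $\min\ell(\Lambda)=\sqrt a$; then $8\mid d$, $a$ is even, $\Lambda^*=\Lambda$, $\ell(\Lambda)^2\subseteq\{a,a+2,a+4,\dots\}$, $\vol(\r^d/\Lambda)=1$, and $\dens(\Lambda)=\vol(\bd)\,a^{d/2}$. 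For every even $l>a$ the lattice $\Lambda$ is $K_l$-admissible, with $K_l=\tfrac1{\sqrt a}\{\sqrt a,\sqrt{a+2},\dots,\sqrt l\}$. Hence it suffices to produce, for \emph{some} even $l>a$, a nonzero radial Schwartz function $H:\r^d\to\r$ with $H(0)=\ft H(0)>0$ such that: (a) $\ft H(x)\ge 0$ for all $x\in\r^d$; (b) $H(x)\le 0$ whenever $|x|^2\in\{a,a+2,\dots,l\}\cup(l,\infty)$; (c) $H(x)=\ft H(x)=0$ whenever $|x|^2\in\{a,a+2,a+4,\dots\}$, with $\ft H$ (and $H$ on $|x|^2>l$) vanishing to even order. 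Indeed $F(x):=H(\sqrt a\,x)$ is then admissible for $\Delta_d(K_l)$ with $F(0)/\ft F(0)=a^{d/2}$, so the chain $\dens(\Lambda)\le\Delta_d(K_l)\le\Delta_d^{\LP}(K_l)\le\vol(\bd)\,a^{d/2}=\dens(\Lambda)$ collapses to equalities; conditions (b) and (c) are exactly what Poisson summation over $\Lambda$ and $\Lambda^*=\Lambda$ forces on an optimal $F$ and serve as the specification for the construction.

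To build $H$ I would proceed as in the proof of Theorem \ref{thm:theguy}: take $H=\alpha H_++\beta H_-$ with $H_\pm$ radial $(\pm1)$-eigenfunctions of the $d$-dimensional Fourier transform, realised as Laplace transforms along $i\r_{>0}$ of (quasi)modular forms of weights $d/2$ and $d/2+2$ for the theta group, with principal parts at the cusps prescribed so that $H$ and $\ft H$ acquire the even-order zeros at the radii $\sqrt{2n}$ demanded by (c). These vanishing conditions are linear and cut the family down to a finite-dimensional space parametrised by $\alpha,\beta$ and the remaining coefficients of the principal parts; within this space one must secure the sign conditions (a) and (b), and securing (a) is the nontrivial part. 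The reason to take $l$ large is that a longer admissible window $K_l$ corresponds to a longer prescribed principal part and thus to more free parameters, which should allow one to push the last sign change of the profile $s\mapsto\ft H(\sqrt s)$ — the quantity $c_d$ of Theorem \ref{thm:theguy} and Table \ref{table:cd} — strictly below $0$. Making this quantitative, i.e.\ showing that $c_d=c_d(l)$ is eventually decreasing in $l$ and becomes negative for $l$ large in every dimension $8\mid d$, is the heart of the matter and is why this remains a conjecture.

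The main obstacle is therefore the \emph{uniform} positivity (a): $\ft H\ge0$ on all of $\r^d$. As the discussion after Theorem \ref{thm:theguy} stresses, $\ft H$ genuinely changes sign, so (a) cannot be read off from positivity of Fourier coefficients of the underlying forms; even the weaker statement ``$\ft H(x)\ge0$ for $|x|>c_d$'' required the computer-assisted scheme used to prove Theorem \ref{thm:theguy}. I see two routes. The analytic one is to represent $\ft H(\sqrt s)\,e^{\pi s}$ by contour integrals of the associated modular forms, in the spirit of the positivity arguments of \cite{Vi17,CKMRV17,CKMRV21}, and to extract bounds \emph{uniform in $l$} showing that the negative dip of $\ft H$ near the origin disappears once the principal part is long enough; the guiding heuristic, as noted in the paper, is that when $l\to\infty$ the only $K_l$-admissible lattices of minimal norm $\sqrt a$ are integral even lattices, each at most as dense as the unimodular ones, so the linear programming value ought to decrease to $\dens(\Lambda)$ with $\ft H\ge0$ attainable in the limit. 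The alternative is to extend the Fourier interpolation formulas of \cite{CKMRV21} to all dimensions divisible by $8$, with interpolation nodes at $\{\sqrt{2n}\}$ and their derivatives; this would pin $H$ down essentially uniquely and recast (a) as positivity of an explicit interpolation kernel, perhaps more amenable to estimation, though still requiring control in both $d$ and $l$. Either way, the difficulty — and the reason the manuscript stops at $d\le1200$ with a case-by-case numerical check — is to make a delicate sign condition hold simultaneously in $d$ and in $l$. Finally, I would not expect the equality case (a description of the packings attaining $\Delta_d(K_l)$) to follow for free: as remarked after Theorem \ref{thm:extremallattices1}, the magic function has zeros other than at the radii $\{\sqrt{2n}\}$, which blocks the argument of \cite[Appendix]{CE03}.
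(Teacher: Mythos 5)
The statement you are addressing is a \emph{conjecture} in the paper; the authors offer no proof, only the heuristic that for $l=\infty$ the only $K$-admissible lattices of minimal norm $\sqrt a$ are integral even lattices. Your reduction is sound and faithfully mirrors the paper's framework: if for some even $l>a$ one can produce a radial Schwartz $H$ with $\ft H\ge 0$ on all of $\r^d$, $H\le 0$ on $\{|x|^2\ge a\}\setminus\bigcup_{n}\{a+2n\}$ restricted to the relevant window, and the double zeros at $|x|^2\in\{a,a+2,\dots\}$, then $F(x)=H(\sqrt a\,x)$ gives $F(0)/\ft F(0)=a^{d/2}$ and Theorem \ref{thm:LP-bounds} collapses the chain of inequalities to $\Delta_d(K_l)=\dens(\Lambda)$. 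Your normalizations ($\dens(\Lambda)=\vol(\bd)a^{d/2}$, admissibility of $\Lambda$ for every $K_l$) are correct.

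However, this is not a proof: the single step that carries all the content — showing that for \emph{some} finite even $l>a$ the construction admits a choice with $\ft H\ge 0$ globally, i.e.\ that the last sign change $c_d(l)$ of $s\mapsto\ft H(\sqrt s)$ can be driven below $0$ by enlarging $l$ — is exactly the conjecture in dual form, and you leave it open (as you acknowledge). Nothing in the paper supplies it either; on the contrary, the authors report that they tried precisely this route, replacing $l_d$ by $l_d+\delta$ for small even $\delta$ in low dimensions, without success, and Theorem \ref{thm:theguy} shows that for $d\ge 56$ (and for $d\equiv 16\bmod 24$ already at the default $l_d$) the function $\ft H$ is genuinely negative somewhere, so there is no soft positivity mechanism to invoke. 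Two further points your plan glosses over: (i) the conjecture concerns arbitrary even unimodular lattices of minimal norm $\sqrt a$, not only extremal ones, so for $a<a_d$ the vanishing conditions defining the space of candidate forms differ from those in Theorem \ref{thm:theguy} and the dimension count $\dim\F_{d,l}=\tfrac l2-\lfloor\tfrac{d-4}{12}\rfloor$ must be redone (more parameters, but also more sign constraints on the longer window $\{a,\dots,l\}$); (ii) the existence of a feasible LP function with ratio exactly $a^{d/2}$ is a strong form of LP-sharpness that is not known to follow from the mere truth of $\Delta_d(K_l)=\dens(\Lambda)$, so even granting the conjecture one cannot reverse-engineer $H$. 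In short: correct strategy, consistent with the paper's own discussion, but the decisive positivity step is missing, so the conjecture remains unproved.
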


If this conjecture is true in dimension $d$, then for $2\leq a \leq a_d$, one could define $L(a,d)=l$, where $l$ is the smallest such that Conjecture \ref{conj:evenlatt} is true. We already know that $L(2,8)=2, L(4,24)=4$, and we have shown that, $L(6,48)\leq 10$. We believe (and it is somewhat believed in the community) that $L(6,48)=6$ and that extremal lattices maximize density with no constraints in dimension $48$. It would be also interesting to find $L(2,16)$ and $L(2,24)$ and show that the lattices $E_8^2, D_{16}^+$ and all the $24$-dimensional Niemeier lattices with root are optimal.

Another curious question is: Is there a finite $K$ such that $\z^2$ is $K$-admissible and with maximal density?\footnote{This problem has a negative answer for dimensions $4$ or higher because by the Four Squares Theorem and scale invariance, the checkerboard lattice will always be $K$-admissible whenever $\z^d$ is, but it is a denser lattice.} If so, how small $\#K$ can be?

\subsection{One-dimensional sphere packings}\label{sec:1DD}
Unconstrained one-dimensional sphere packings are trivial to construct as unit intervals tile the line. However, finding optimal one-dimensional $K$-admissible sphere packings for an arbitrary given set $K$ seems to be a difficult question. Here, we are concerned with periodicity: \emph{When can we make sure that there exists some optimal one-dimensional packing which is periodic?} Unfortunately, greedy choice usually does not give an optimal construction. By greedy choice, we mean one starts with some configuration $\cup_{i=1}^N (a_i+I)$ for $a_1<a_2<...<a_{N}$ (with $I=[0,1]$) and then takes an interval $a_{N+1}+I$ with $a_{N+1}\geq 1+a_N$ as small as possible so that $\cup_{i=1}^{N+1} (a_i+I)$ still is $K$-admissible. For example, first consider the case where $K=\{1,\al\}$ for some $\al>2$. Then greedy choice gives the packing $P$ where one puts two unit intervals glued together, and then a gap of length $\al-1$, and the repeats this configuration periodically. One can see this is optimal by noting that, in any given $K$-admissible packing, the distance between the centers of any unit interval and the second one after it must be at least $1+\al$. This means that an interval of size $N(1+\al)$ contains at most $2N$ unit intervals, which shows that $\Delta_1(\{1,\al\}) \leq 2/(1+\al)$, and this is attained by $P$. However, this strategy does not produce the best packing in general. For instance, if $K=\{1,\al,\be \}$ with $2\al\geq \be>2\al-1>\al$, then greedy choice gives the packing $P_\be=I +(1+\be)\z+\{0,1\}$, which has density $2/(1+\be)$.  This is not optimal since $P_\al=I + \al \z $ has density $1/\al>2/(1+\be)$.  In Lemma \ref{thm:Kalbe}, we completely solve this problem for all choices of $\al$ and $\be$.

In Proposition \ref{prop:existence}, we show that when $K$ is a compact set, then optimal packings for $\Delta_d(K)$ always exist. However it is not guaranteed that they are periodic, as this is not even known in the unconstrained case. Nevertheless, we expect them to be periodic in the one-dimensional case. The following result proves this in the `almost' finite case.

\begin{theorem}\label{thm:1D}
Let $K\subset [1,\infty)$ be a compact set such that $1\in K$. Assume that $K$ has no accumulation points from the left and only finitely many accumulation points from the right. Moreover, let $K'$ be its set of accumulation points and assume that $(K'+K') \cap K'=\emptyset$.
Then there exists a $K$-admissible periodic sphere packing $P$ of $\r$ such that $\dens(P)=\Delta_1(K)$. 
\end{theorem}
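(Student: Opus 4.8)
The plan is to reduce the statement to the case of a \emph{finite} set $K$, which I can settle by a minimal-mean-cycle argument on a finite weighted graph, and then to remove the accumulation points of $K$ by ``collapsing'' the gaps that lie near them.

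Throughout write $M=\sup K=\max K$ (it lies in $K$ since $K$ is compact). \textbf{Step 1 (no big gaps).} First I would observe that in computing $\Delta_1(K)$ one may restrict to packings all of whose consecutive gaps lie in $K$: if $P=X+\bd$ is $K$-admissible and $x_{i+1}-x_i>M$ for some $i$, then replacing each such gap by $M$ simultaneously keeps the packing $K$-admissible — the only pair whose distance drops to something $\le M$ is $\{x_i,x_{i+1}\}$, which now has distance $M\in K$, while every other affected pair still has distance $\ge M+1$ — and it only compresses the packing, hence does not decrease the density. \textbf{Step 2 (the finite case).} Assume $K$ is finite. I would form a finite directed weighted graph $G$: its vertices are the \emph{admissible states}, that is, finite sets $S\subset(0,M]$ such that $S\cup\{0\}$ has all its pairwise differences in $K$ (finitely many, since $\#S\le M$ and the alphabet is finite), and it has an edge $S\xrightarrow{\,g\,}S'$ of weight $g$ whenever $g\in K$, $S'=\{g\}\cup\{s+g:s\in S,\ s+g\le M\}$, and $s+g\in K$ for all $s\in S$ with $s+g\le M$. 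Then bi-infinite $K$-admissible packings with gaps in $K$ correspond to bi-infinite paths in $G$, the density being the reciprocal of the asymptotic mean edge weight. By the classical minimal-mean-cycle characterization, the infimum over bi-infinite paths of the (lower) mean weight equals the minimum mean weight $\mu^*$ over the finitely many cycles of $G$ and is attained on a cycle; hence $\Delta_1(K)=1/\mu^*$, and the cycle realizing $\mu^*$ produces a \emph{periodic} $K$-admissible packing of density exactly $\Delta_1(K)$. (The graph is nonempty — the constant-gap-$M$ loop is always present — so $\mu^*$ is well defined.)

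\textbf{Step 3 (reduction to a finite subset).} For general $K$ the goal is to find a finite $\tilde K\subseteq K$ with $M\in\tilde K$ and $\Delta_1(\tilde K)=\Delta_1(K)$: Step 2 applied to $\tilde K$ then gives a periodic $\tilde K$-admissible packing of density $\Delta_1(\tilde K)$, and since $\tilde K\subseteq K$ and $\max\tilde K=M=\max K$ such a packing is automatically $K$-admissible, which finishes the proof. Only $\Delta_1(\tilde K)\ge\Delta_1(K)$ needs work. Using the hypotheses, write $K=F\cup\bigcup_{t\in K'}T_t$, where $F$ is finite — the isolated points of $K$ bounded away from $K'$, together with $K'$ itself — and each $T_t\subset(t,t+\varepsilon_t)$ is a sequence decreasing to $t$; here one uses that $K'$ is finite and, crucially, that $K$ has no left accumulation points, so $K$ sits only \emph{to the right} of each point of $K'$. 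Let $P^*$ be an optimal $K$-admissible packing (it exists by Proposition \ref{prop:existence}), which by Step 1 we may take to have all gaps in $K$. If $P^*$ uses only finitely many distinct gap values, take $\tilde K$ to be the finite set of its distances that are $\le M$, together with $M$. Otherwise the gap values of $P^*$ accumulate at points of $K'$, and for a suitably small $\delta>0$ I would replace every gap of $P^*$ lying in some interval $(t,t+\delta)$, $t\in K'$, by $t$ itself. This only shortens the packing, so the density does not decrease; the resulting packing uses finitely many distinct gap values (its remaining gaps lie in the finite set $K\setminus\bigcup_{t\in K'}(t,t+\delta)$); and — this is where I would invoke $(K'+K')\cap K'=\emptyset$ and again the absence of left accumulation points — for $\delta$ small enough it is still $K$-admissible, after which we take $\tilde K$ as in the finite-gap case.

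\textbf{Main obstacle.} The hard part is verifying, in Step 3, that collapsing the gaps near the accumulation points preserves $K$-admissibility. If a distance $|x-y|$ between two centers crosses $r$ collapsed gaps it decreases by a nonzero amount (at most $r\delta$, with $r\le M$ bounded), and one must rule out that it lands in a ``hole'' of $K$ — just to the left of an isolated point of $K$, or just below $M$. The no-left-accumulation hypothesis says precisely that such holes lie immediately to the left of every point of $K'$ and of $M$, while $(K'+K')\cap K'=\emptyset$ (together with the finiteness of $K'$ and closedness of $K$) should let one choose $\delta$ so small that a distance crossing a collapsed region either stays above $M$ or else equals one of the (unchanged) ``skeleton'' distances, which is already in $K$. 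Making this quantitative — in particular controlling the cumulative effect of several simultaneous collapses inside one distance, and choosing $\delta$ uniformly — is the delicate technical core; everything else (Steps 1 and 2, and the graph-theoretic input) is routine.
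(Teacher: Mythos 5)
Your overall architecture (reduce to a finite $\wt K\subset K$, then solve the finite case by a cycle argument on a finite graph) is the same as the paper's: your Step 2 is a reformulation of Theorem \ref{thm:finitedomino} (the paper's ``atoms'' are exactly your minimal-mean cycles), and your Step 1 matches the paper's reduction to gaps drawn from $K$. The difference, and the problem, is entirely in Step 3, which you yourself flag as the ``delicate technical core'' and do not carry out. That step is not a routine quantification; it is the actual content of the theorem, and the mechanism you propose for it does not obviously work.

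Concretely: after collapsing, a distance $d=\gamma+\sum_j(\al_{i_j}+\lambda_j)\in K$ (with each $\lambda_j<\delta$) becomes $d'=\gamma+\sum_j\al_{i_j}$, and you must show $d'\in K\cup(M,\infty)$. Knowing only that $d'+\ep\in K$ for one small $\ep>0$ gives nothing: $K$ is closed, so $d'$ could sit in a gap of $K$ just below an isolated point, and no uniform choice of $\delta$ rules this out a priori, because the set of candidate values $d'$ itself depends on $\delta$ (a compactness argument in $\delta$ only shows $d'\to$ a point of $K$, not $d'\in K$). The paper avoids this by never choosing a metric threshold: writing the tails as $\al_j+\lambda^j_k$ with $\lambda^j_k\downarrow 0$, it defines $M(\gamma,\al_i,r)=\sup\{s:\gamma+\al_i+\lambda^r_s\in K\cup(\beta_M,\infty)\}$, proves a uniform bound $C$ on this quantity whenever it is \emph{finite} (this is where $(K'+K')\cap K'=\emptyset$ and the absence of left accumulation points enter, via the dichotomy ``$\al_j+\al_i$ is an accumulation point of $K$, hence either in $K'$ --- excluded --- or $\ge\beta_M$''), and only collapses gaps whose index exceeds $C$. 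For such a gap the quantity $M(d_2,\al_i,i)$ is forced to be infinite, i.e.\ $d_2+\al_i+\lambda^i_t\in K\cup(\beta_M,\infty)$ for \emph{infinitely many} $t$, and passing to the limit along that infinite set yields $d_2+\al_i\in K\cup(\beta_M,\infty)$; this is then propagated inductively along $d_3,d_4,\dots$ for at most $\lceil\beta_M\rceil$ steps. So the membership of the collapsed distance in $K$ is extracted from the infinitude of admissible perturbations indexed along the tail sequence, not from smallness of the perturbation. To repair your Step 3 you would need to replace ``$\delta$ small'' by an index-based cutoff of this kind and supply the uniform-bound lemma; as written, the step where a collapsed distance ``equals one of the unchanged skeleton distances, which is already in $K$'' is asserted, not proved, and is precisely where the two structural hypotheses on $K'$ must be used.
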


In particular, optimal periodic $K$-admissible packings exist whenever $K$ is finite. However, optimal periodic packings will also exist in the (illustrative) case 
$$
K=\{1,\sqrt{2}, e,\pi\} +  \{10^{-n}\}_{n\geq 1}.
$$

\begin{conjecture}\label{conj:2}
Let $K\subset [1,\infty)$ be a compact set such that $1\in K$. Then there exists a $K$-admissible periodic sphere packing of $\r$ with maximal density.
\end{conjecture}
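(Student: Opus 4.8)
The plan is to turn the one-dimensional problem into a finite combinatorial optimization and then run a minimum-mean-cycle argument.

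\smallskip
\emph{Reformulation.} First I would pass from packings to gap sequences. A packing of $\r$ is an increasingly ordered center set $\cdots<x_{-1}<x_0<x_1<\cdots$; put $g_i:=x_{i+1}-x_i$. Since $\min K=1$ and every inter-center distance lies in $K\cup(\sup K,\infty)\subset[1,\infty)$, one has $g_i\ge1$, and $K$-admissibility is equivalent to the requirement that every partial sum $g_i+\cdots+g_{j-1}$ lie in $K\cup(M,\infty)$, where $M:=\sup K$; equivalently, every partial sum that is $\le M$ must lie in $K$. As each $g_i\ge1$, such a partial sum has at most $\lfloor M\rfloor$ terms, so $K$-admissibility is a condition on windows of at most $N:=\lfloor M\rfloor$ consecutive gaps. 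Moreover $\dens(P)$ is the reciprocal of the lower average gap, so $\Delta_1(K)^{-1}=\inf\liminf_{n\to\infty}\tfrac1{2n+1}\sum_{|i|\le n}g_i$ over bi-infinite $K$-admissible gap sequences. By Proposition \ref{prop:existence} this infimum is attained, and since $M\z$ is always $K$-admissible we have $\Delta_1(K)\ge1/M>0$, so an optimizer can be taken bi-infinite.

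\smallskip
\emph{Reducing to a finite alphabet.} The decisive step is to show the infimum above is unchanged if we only allow gap values in a finite subset of $K$. Let $K'=\{\kappa_1<\dots<\kappa_m\}$ be the (finite) accumulation set of $K$. By the no-left-accumulation hypothesis there is $\delta_0>0$ with $K\cap(\kappa_j-\delta_0,\kappa_j)=\emptyset$ for all $j$; near $\kappa_j$ the set $K$ is $\{\kappa_j\}$ plus a sequence strictly decreasing to $\kappa_j$, and $F:=K'\cup\big(K\setminus\bigcup_j(\kappa_j-\delta_0,\kappa_j+\delta_0)\big)$ is finite. Call a gap \emph{wild} if it lies in some $(\kappa_j,\kappa_j+\delta_0)$ and \emph{tame} otherwise. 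I would prove that some bi-infinite optimizer has all gaps tame. The mechanism: wild gaps are rigid and nearly free. If $g_i,g_{i+1}$ are wild near $\kappa_a,\kappa_b$ and $g_i+g_{i+1}\le M$, then $g_i+g_{i+1}\in K$ lies within $2\delta_0$ of $\kappa_a+\kappa_b$, which by $(K'+K')\cap K'=\emptyset$ is \emph{not} an accumulation point of $K$; so for $\delta_0$ small this sum is one of finitely many points of $K$ near $\kappa_a+\kappa_b$, which pins the relation between the perturbations $g_i-\kappa_a$ and $g_{i+1}-\kappa_b$. Iterating along a maximal run of consecutive wild gaps shows the run is determined, up to finitely many discrete choices, by its first entry, and that its length exceeds the relevant integer combination of the $\kappa_j$ by only $O(\delta_0)$ per gap. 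Hence, for $\delta_0\to0$, one may replace each maximal wild run by the cheapest tame run of the same combinatorial type: $K$-admissibility survives because $\kappa_j\in K$ and because the empty interval $(\kappa_j-\delta_0,\kappa_j)$ gives room to round partial sums down to the previous tame value, and the density does not decrease because the discarded wild perturbations contributed only $O(\delta_0)\cdot(\#\text{gaps})\to0$ to the total length. Feeding in the exact optimizer of Proposition \ref{prop:existence} then produces an honest optimizer with all gaps in the finite set $F$ (for some fixed small $\delta_0$).

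\smallskip
\emph{Minimum mean cycle.} Now form the finite directed graph $G$ whose vertices are the length-$N$ tuples over $F$ all of whose partial sums $\le M$ lie in $K$, with an edge of weight $a_1$ from $(a_1,\dots,a_N)$ to $(a_2,\dots,a_N,b)$ whenever both endpoints are vertices. Bi-infinite walks in $G$ correspond exactly to tame $K$-admissible gap sequences (any constrained partial sum, having at most $N$ terms, sits inside a window that is a vertex), and the lower average edge-weight of a walk equals the lower average gap of the packing; by the previous step the minimum of this over all walks is $\Delta_1(K)^{-1}$. A minimum-mean-cycle argument shows it is attained on a closed walk, whose periodic repetition is a $K$-admissible periodic packing $P$ with $\dens(P)=\Delta_1(K)$. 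This finite graph is also the object behind the algorithmic domino reformulation of Section \ref{sec:1DD}.

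\smallskip
\emph{Where the difficulty lies.} The last step is routine once the finite alphabet is in place; the real content, and the main obstacle, is the reduction to tame gaps. The two hypotheses on $K$ enter there in genuinely different ways: no left accumulation is what makes rounding partial sums \emph{down} safe (there is a definite gap in $K$ just below each $\kappa_j$), while $(K'+K')\cap K'=\emptyset$ is what prevents a surviving continuous modulus in a run of wild gaps — such a modulus, if allowed, could encode an irrational rotation and force genuinely aperiodic optimizers, in the same way non-locally-constant potentials on shift spaces admit Sturmian minimizers. Converting the rigidity-and-cheapness heuristics into a clean proof, and in particular upgrading a sequence of near-optimizers into one exact periodic optimizer, is the delicate part.
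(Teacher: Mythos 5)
There is a fundamental mismatch between what you are asked to prove and what your argument addresses. The statement is Conjecture \ref{conj:2}, which concerns an \emph{arbitrary} compact $K\subset[1,\infty)$ with $1\in K$; the paper does not prove it, and in fact explicitly warns that it ``could be very hard to prove, perhaps even false.'' Your proposal silently imports the three extra hypotheses of Theorem \ref{thm:1D}: that the accumulation set $K'$ is finite, that accumulation happens only from the right, and that $(K'+K')\cap K'=\emptyset$. None of these is available here. For a general compact $K$ your decisive ``reduction to a finite alphabet'' collapses at the very first line: $K'$ need not be finite (take $K$ containing a Cantor set, or a set with $K''\neq\emptyset$), there need not be any $\delta_0>0$ with $K\cap(\kappa-\delta_0,\kappa)=\emptyset$ (left accumulation), and sums of accumulation points can again be accumulation points, so the set $F$ you construct is not finite and the rigidity mechanism for ``wild runs'' has nothing to bite on. Your own closing paragraph correctly identifies that without $(K'+K')\cap K'=\emptyset$ a surviving continuous modulus could encode an irrational rotation and force aperiodic optimizers --- which is precisely why the general statement is left as a conjecture rather than a theorem.

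Even restricted to the hypotheses of Theorem \ref{thm:1D}, where your outline does parallel the paper's strategy (periodization and gap sequences, a finite-alphabet reduction as in the paper's Step 1--2 with the bound $C$ on $M(\gamma,\alpha_i,r)$, then the finite domino graph and the decomposition of closed paths into atoms as in Theorem \ref{thm:finitedomino}), the central step is not actually carried out: you describe the replacement of wild runs by tame ones as a ``rigidity-and-cheapness heuristic'' and explicitly defer ``converting [it] into a clean proof.'' The paper's version of this step is a concrete induction replacing each $d_n=\alpha_i+\lambda_k^i$ with $k>C$ by $\alpha_i$ and verifying compatibility with the next $\lceil\beta_M\rceil$ gaps via the finiteness of $M(\gamma,\alpha_i,r)$; your sketch gestures at this but does not supply it. So the proposal neither proves the conjecture as stated nor completes a proof of the weaker theorem it implicitly targets.
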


A compact set $K$ can be classified by its sequence of derived sets; that is, 
$$K,K', K'', K''',...,$$ where $S'$ is the set of points $p\in S$ such that $(p-\ep,p+\ep)\cap S \setminus \{ p\} \neq \emptyset$ for any $\ep>0$ (the accumulation points of $S$). Theorem \ref{thm:1D} solves the above conjecture for the case $K'=\emptyset$ (i.e., $K$ is finite) and deals with the case $K''=\emptyset$ (i.e., $K'$ is finite) under the condition that points only accumulate from the right and no accumulation point is a sum of two others.

We believe that Conjecture \ref{conj:2} could be very hard to prove, perhaps even false, as this is equivalent to (when $\sup(K)\in K$) a generalization of Theorem \ref{thm:finitedomino} (which is about linear domino tilings) for an infinite compact sets of symbols $\Sigma$ and domino pieces $\D\subset \Sigma^*\times \Sigma^*$.

\section{Generalities}

In this section, we establish some basic facts about sphere packing with forbidden distances. Throughout this section, $K\subset [1,\infty)$ will always be a bounded set such that $1\in K$, and the word sphere will be used to denote any  $x+\bd$ for some $x\in \r^d$.

\begin{proposition}
\label{prop:existence}
Assume $K$ is compact. Then there exists a packing $P\in\pazocal{P}_d(K)$ such that $\dens(P)=\Delta_d(K)$.
\end{proposition}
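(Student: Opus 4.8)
The plan is to run a standard compactness / diagonal-extraction argument on a density-maximizing sequence of packings, taking care that the closedness of $K$ is exactly what keeps the admissibility constraint in the limit. First I would pick a sequence $P_n=X_n+\bd\in\pazocal{P}_d(K)$ with $\dens(P_n)\to\Delta_d(K)$. By translating each $P_n$ I may assume $0\in X_n$ (a sphere is always present near the origin; more carefully, translate so that some sphere of $P_n$ meets $Q_d$, which does not change the density since it is defined by a limsup over large cubes). Since $K$ is bounded, say $K\cup(\sup K,\infty)\subset[1,\infty)$ and in particular centers are $1$-separated, the point sets $X_n$ are uniformly locally finite: any ball of radius $R$ contains at most $C_d(R)$ points of $X_n$, uniformly in $n$. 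Hence, passing to a subsequence and using a diagonal argument over an exhausting sequence of balls $B(0,m)$, the sets $X_n$ converge to a locally finite set $X\subset\r^d$ in the local Hausdorff sense: every point of $X$ is a limit of points $x_n\in X_n$, and conversely on each bounded region the points of $X_n$ eventually lie within $o(1)$ of points of $X$, with matching multiplicities for $n$ large.

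Next I would check that $P:=X+\bd$ is admissible, i.e. $P\in\pazocal{P}_d(K)$. Take two distinct $x,y\in X$ and write $x=\lim x_n$, $y=\lim y_n$ with $x_n,y_n\in X_n$ and $x_n\neq y_n$ for large $n$. Then $|x-y|=\lim|x_n-y_n|$, and each $|x_n-y_n|$ lies in the closed set $\{0\}\cup K\cup[\sup K,\infty)$ — here I use that $K$ is compact, so $K\cup[\sup K,\infty)$ is closed (note $\sup K\in\overline K=K$), hence $\{0\}\cup K\cup[\sup K,\infty)$ is closed, and the limit of a sequence in a closed set stays in it. Since $x\neq y$ the value is not $0$, so $|x-y|\in K\cup[\sup K,\infty)$, which is what membership in $\pazocal{P}_d(K)$ requires (the constraint only excludes the half-open set $(1,\sup K]\setminus K$; the endpoint behaviour is absorbed by $K$ being closed). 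Thus $P\in\pazocal{P}_d(K)$.

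Finally I would show $\dens(P)\ge\Delta_d(K)$, which combined with the trivial inequality $\dens(P)\le\Delta_d(K)$ finishes the proof. The point is lower semicontinuity of density under local convergence: for any fixed large $t$, the number of spheres of $P$ whose centers lie in $tQ_d$ is at least the liminf of the corresponding counts for $P_n$ (centers can only ``escape to infinity'' in the limit, never appear from nowhere, by local finiteness and the convergence with multiplicity). More carefully, fix $t$ and $\eta>0$; for $n$ large the centers of $X_n$ inside $(t-\eta)Q_d$ are within $\eta$ of distinct centers of $X$, so $\#(X\cap tQ_d)\ge\#(X_n\cap(t-\eta)Q_d)$ for $n$ large. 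Turning this into a statement about volumes and then taking $t\to\infty$, one gets $\dens(P)\ge\limsup_n\dens(P_n)=\Delta_d(K)$; the manipulation here is exactly the standard one showing that the packing density functional is upper semicontinuous along locally convergent sequences of packings (as in the existence of optimal periodic-free packings for the classical problem), and the uniform $1$-separation makes all the counting estimates uniform in $n$.

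The main obstacle is the last step: one must be careful that the $\limsup$ defining $\dens$ does not lose mass in the limit. The cleanest way around this is to first replace each $P_n$ by a packing of density $\ge\dens(P_n)-1/n$ that is ``uniformly dense at scale $t_n$'' for a suitable $t_n\to\infty$ — e.g. by translating a large cube of near-optimal density of $P_n$ to the origin — so that along the extracted subsequence the density of $P$ can be read off from a single growing family of cubes rather than from an uncontrolled limsup. This periodization-type trick (standard in the sphere packing literature) is what makes the semicontinuity estimate go through cleanly; everything else is routine compactness.
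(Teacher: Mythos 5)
Your overall strategy---maximizing sequence, uniform local finiteness of the center sets, diagonal extraction of a local limit, and closedness of $\{0\}\cup K\cup[\sup(K),\infty)$ to preserve admissibility in the limit---is exactly the paper's (it is the Groemer-type existence argument, and the paper cites \cite{G63}). The admissibility step is correct and is indeed precisely where compactness of $K$ enters.

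The genuine gap is in the density step, and you have correctly located it but not closed it. To conclude $\dens(P)\ge\Delta_d(K)$ you need, for each \emph{fixed} scale $t_m$, a lower bound on $\#(X_n\cap t_mQ_d)$ holding for all large $n$ (uniformly in $n$ with $m$ fixed), since only then does the local limit inherit enough spheres in $t_mQ_d$. Your proposed fix---translate one cube of near-optimal density of $P_n$ to the origin---only guarantees that $P_n$ is dense in its \emph{own} large cube $t_nQ_d$; it says nothing about the sphere count of $P_n$ inside the much smaller fixed cube $t_mQ_d$ for $m<n$, where $P_n$ could still be sparse (its near-optimal mass could sit far from the origin inside $t_nQ_d$). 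The paper's device is stronger: by Proposition \ref{prop:periodic_packings_approximate_Delta} it replaces $P_n$ by a $k_n\z^d$-periodic packing that \emph{maximizes} the number of admissible spheres in $k_nQ_d$; a swapping argument (if $P_n$ had noticeably fewer spheres in $k_mQ_d$ than the optimal configuration $P_m$, one could excise a boundary collar and graft in $P_m$'s configuration, contradicting maximality, at a cost of only $O(k_m^{d-1})$ spheres) then yields the uniform-in-$n$ bound $\vol(P_n\cap k_mQ_d)/\vol(k_mQ_d)\ge\dens(P_m)+O(1/k_m)$, which passes to the local limit. Alternatively your translation trick can be repaired by a \emph{nested} selection of good sub-cubes at all scales simultaneously, with controlled accumulation of boundary errors---but that has to be carried out; as written, the single-scale translation does not suffice.
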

\begin{proof}
The proof is exactly the same as for unconstrained sphere packings \cite{G63}. Let  $(P_n)_{n\geq 1}$ be a maximizing sequence of sphere packings such that $\dens(P_n)$ is increasing and converges to $\Delta_d(K)$. By Proposition \ref{prop:periodic_packings_approximate_Delta}, we can assume that each $P_n$ is ${k_n} \z^d$ periodic for some integer $k_n>0$ such that $k_n\nearrow\infty$ and that $P_n$ maximizes the number of spheres one can put inside $k_nQ_d$. Using Hausdorff's topology for compact sets and a standard Cantor's diagonal argument, we can assume that $(P_n)_{n\geq 1}$ converges locally to some packing $P$, which is $K$-admissible since $K$ is compact. By maximality, the number of spheres of $P_n$ inside $k_mQ_d$ must not be much smaller than that of $P_m$ (the error must be bounded by the surface area of the boundary of $k_mQ_d$). We obtain
$$
\vol(P_n \cap k_m Q_d)/\vol(k_m Q_d) > \vol(P_m \cap k_m Q_d)/\vol(k_m Q_d) + O(1/k_m) =  \dens(P_m) + O(1/k_m)
$$
for all $n>m$, where $O(1/k_m)$ comes from the spheres that touch the boundary of $k_mQ_d$. Taking $n\to\infty$, we obtain $\vol(P \cap k_m Q_d)/\vol(k_m Q_d) \geq \dens(P_m) + O(1/k_m)$. Taking $m\to\infty$, we conclude that $\dens(P)\geq \Delta_d(K)$, which finishes the proof.
\end{proof}

\begin{proposition}
\label{prop:periodic_packings_approximate_Delta}
   Any $K$-admissible sphere packing can be approximated by a periodic one. In particular, if $N_t$ denotes the maximum number of spheres one can put inside $tQ_d$ such that the configuration is $K
$-admissible, then 
$$
\lim_{t\to\infty} \frac{N_t \vol(\bd)}{\vol(tQ_d)} = \Delta_d(K).
$$
\end{proposition}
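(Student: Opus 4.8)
The plan is to show that any $K$-admissible packing can be replaced, with negligible loss of density, by one that is $k\z^d$-periodic for a large parameter $k$, and then to tie the periodic approximation to the counting quantity $N_t$. The key observation is that $K$-admissibility is a condition that only involves \emph{pairs} of centers, and since $K$ is bounded with $R:=\sup(K)<\infty$, any constraint between two centers is vacuous once their distance exceeds $2rR$ (here $r=\tfrac12$, so once the distance exceeds $R$). This \emph{finite range} of interaction is what makes the standard periodization argument go through.

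First I would fix an arbitrary $K$-admissible packing $P=X+\bd$ with $\dens(P)$ close to $\Delta_d(K)$, say within $\ep$. Pick $t$ large (to be chosen depending on $\ep$) with $\dens(P)\le \vol(P\cap tQ_d)/\vol(tQ_d)+\ep$ along a suitable subsequence of scales realizing the $\limsup$. Now take the centers of $P$ lying in the slightly shrunk cube $(t-2R)Q_d$ and discard the rest; call this finite set $X_0\subset (t-2R)Q_d$. Tile $\r^d$ by translates of $tQ_d$ along the lattice $t\z^d$ and place a translated copy of $X_0$ in each tile. The resulting set $X_{\mathrm{per}}=X_0+t\z^d$ is $t\z^d$-periodic. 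It is still $K$-admissible: two centers in the same tile inherit admissibility from $P$; two centers in different tiles are at distance $>2R$ (because each copy sits in the interior sub-cube $(t-2R)Q_d$, leaving a guard region of width $2R$ between copies), so no forbidden distance can occur. This is exactly the step where boundedness of $K$ is used, and it is the one requiring a little care — one must make the guard region wide enough (width $2R$ suffices, since the diameter of the forbidden ``annulus'' of center-distances is at most $2R$), and one must make sure discarding the boundary spheres costs only $O(R/t)$ in density, which follows from the usual surface-to-volume estimate $\vol(tQ_d\setminus (t-2R)Q_d)=O(R/t)\cdot \vol(tQ_d)$.

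Next I would compare densities. The periodic packing $P_{\mathrm{per}}=X_{\mathrm{per}}+\bd$ has density exactly $\#X_0\cdot\vol(\bd)/\vol(tQ_d)$, and since $X_0$ contains all centers of $P$ in $(t-2R)Q_d$ we get
\begin{equation*}
\dens(P_{\mathrm{per}})=\frac{\#X_0\,\vol(\bd)}{\vol(tQ_d)}\ge \frac{\vol(P\cap (t-2R)Q_d)}{\vol(tQ_d)}\ge \dens(P)-O(R/t)-\ep.
\end{equation*}
Letting $t\to\infty$ and then $\ep\to0$ shows $\sup$ over periodic $K$-admissible packings of the density equals $\Delta_d(K)$, which is the first assertion. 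For the ``in particular'' clause, note that a $K$-admissible configuration of $N_t$ spheres inside $tQ_d$ whose centers we again restrict to $(t-2R)Q_d$ can be periodized exactly as above, giving $\liminf_{t\to\infty} N_t\vol(\bd)/\vol(tQ_d)\ge \Delta_d(K)$; conversely, any $K$-admissible packing intersected with $tQ_d$ produces at most $N_t+O(R/t)\cdot(t^d/\vol(\bd))$ spheres (an extra $O$-term absorbing spheres that straddle the boundary), so $\limsup_{t\to\infty} N_t\vol(\bd)/\vol(tQ_d)\le \Delta_d(K)$, and the two inequalities give the stated limit. The main obstacle is really just bookkeeping the boundary/guard-region estimates cleanly; there is no deep difficulty once one notes that $R=\sup(K)<\infty$ makes the constraint finite-range, exactly as in Groemer's classical argument \cite{G63} for the unconstrained case.
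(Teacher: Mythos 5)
Your first assertion (periodization of an arbitrary $K$-admissible packing) is correct and is essentially the paper's argument: the paper keeps all centers of $X\cap tQ_d$ and spaces the copies out on the lattice $(t+\sup(K))\z^d$, whereas you keep the period $t$ and shrink to $(t-2R)Q_d$; both create the same $O(1/t)$-wide guard region forcing inter-copy distances into $(\sup(K),\infty)$, and both lose only $O(1/t)$ in density.

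The ``in particular'' clause, however, has the two inequalities attached to the wrong arguments, and as written neither implication is valid. Periodizing the maximal $N_t$-configuration produces a periodic $K$-admissible packing of density $\delta_t+O(1/t)$ (with $\delta_t:=N_t\vol(\bd)/\vol(tQ_d)$); since every periodic packing has density at most $\Delta_d(K)$, this yields $\limsup_t\delta_t\le\Delta_d(K)$ --- the \emph{upper} bound, not the lower bound $\liminf_t\delta_t\ge\Delta_d(K)$ you claim it gives (to get a lower bound this way you would need to know a priori that the maximal finite configuration is nearly optimal, which is the point at issue). Conversely, the maximality of $N_t$ gives $\#(X\cap tQ_d)\le N_t$ for any $K$-admissible $X$, i.e.\ $\delta_t\ge \vol(P\cap tQ_d)/\vol(tQ_d)-O(1/t)$; this is the \emph{lower} bound on $\delta_t$, not the upper bound you attribute to it. Moreover, for a general packing $\dens(P)$ is only a $\limsup$, so this inequality approaches $\dens(P)$ only along a subsequence of $t$'s and yields $\limsup_t\delta_t\ge\Delta_d(K)$ rather than the needed $\liminf$; to conclude $\liminf_t\delta_t\ge\Delta_d(K)$ you should apply it to a \emph{periodic} near-optimal packing (available by your first part, and this is what the paper does), for which the density is a genuine limit and the bound holds for all large $t$. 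With the two ingredients reattached to the correct conclusions and this last point added, your proof is complete.
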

\begin{proof}
Let $P=X+\bd$ be a $K$-admissible sphere packing.
Then $\wt P_t = X\cap tQ_d + (t+\sup(K))\z^d + \bd$  is $K$-admissible, periodic and
$$
\dens(\wt P_t)=\frac{\vol(P\cap tQ_d)}{\vol(tQ_d)}(1+O(1/t)).
$$
We obtain $\limsup_{t\to\infty} \dens(\wt P_t) = \dens(P)$. Let $\delta_t=\frac{N_t \vol(\bd)}{\vol(tQ_d)}$. The same periodization argument shows that $\limsup_{t\to\infty} \delta_t \leq \Delta_d(K)$. However, if $P=X+t\z^d$ is a $K$-admissible periodic sphere packing, by maximality, we must have that $\#(X \cap tQ_d) \leq N_t$; hence,
$$
\delta_t \geq \dens(P) + O(1/t),
$$
where $O(1/t)$ accounts for boundary intersections. We obtain $\liminf_{t\to\infty } \delta_t \geq \Delta_d(K)$. This finishes the lemma.
\end{proof}

\begin{lemma}\label{lem:enumerableK}
For any compact $K$, there is a countable set $\wt K$ such that $\Delta_d(K)=\Delta_d(\wt K)$.
\end{lemma}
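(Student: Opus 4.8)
The plan is to let $\wt K$ be a countable subset of $K$ obtained by keeping the two ``structural'' points $1$ and $\sup K$ (the latter lies in $K$ since $K$ is compact) together with every interpoint distance realized in a sequence of near-optimal packings. The key observation is that although a single $K$-admissible packing may a priori realize uncountably many distances of $K$, a \emph{periodic} packing realizes only countably many, while periodic packings already exhaust $\Delta_d(K)$ in the limit by Proposition~\ref{prop:periodic_packings_approximate_Delta}.

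Concretely, I would first use Proposition~\ref{prop:periodic_packings_approximate_Delta} together with the definition of $\Delta_d(K)$ as a supremum to fix a sequence $(P_n)_{n\ge1}$ of periodic $K$-admissible packings, $P_n=Y_n+\Lambda_n+\bd$ with $Y_n$ finite and $\Lambda_n$ a lattice, with $\dens(P_n)\to\Delta_d(K)$. For each $n$ the distance set
$$
D_n:=\{\,|y-y'+\lambda|\ :\ y,y'\in Y_n,\ \lambda\in\Lambda_n\,\}
$$
is countable, being a countable union of finite sets, and I would then set
$$
\wt K:=\{1,\sup K\}\cup\bigcup_{n\ge1}\bigl(D_n\cap K\bigr),
$$
which is a countable subset of $K$ with $1\in\wt K$ and $\sup\wt K=\sup K$.

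Next I would verify the two inequalities. For $\Delta_d(\wt K)\le\Delta_d(K)$: if $X+\bd\in\mathcal{P}_d(\wt K)$ then all its interpoint distances lie in $\{0\}\cup\wt K\cup(\sup\wt K,\infty)$, and since $\wt K\subseteq K$ and $\sup\wt K=\sup K$ this set is contained in $\{0\}\cup K\cup(\sup K,\infty)$, so $X+\bd\in\mathcal{P}_d(K)$; hence $\mathcal{P}_d(\wt K)\subseteq\mathcal{P}_d(K)$ and the supremum of densities can only decrease. For $\Delta_d(K)\le\Delta_d(\wt K)$: each $P_n$ is already $\wt K$-admissible, since its interpoint distances lie in $\{0\}\cup D_n$ and
$$
D_n\subseteq\{0\}\cup(D_n\cap K)\cup\bigl(D_n\cap(\sup K,\infty)\bigr)\subseteq\{0\}\cup\wt K\cup(\sup\wt K,\infty),
$$
using $D_n\cap K\subseteq\wt K$ and $\sup K=\sup\wt K$. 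Thus $P_n\in\mathcal{P}_d(\wt K)$ for every $n$, giving $\Delta_d(\wt K)\ge\limsup_n\dens(P_n)=\Delta_d(K)$. Combining the two estimates yields $\Delta_d(\wt K)=\Delta_d(K)$, and $\wt K$ is countable.

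I do not expect a genuine obstacle here; the one place where care is needed — and where a careless choice of $\wt K$ would fail — is the requirement that $\sup\wt K=\sup K$. If $\wt K$ had strictly smaller supremum, the effective forbidden set $(1,\sup\wt K]\setminus\wt K$ would be strictly smaller than $A=(1,\sup K]\setminus K$ near $\sup K$, so $\mathcal{P}_d(\wt K)$ could contain packings using distances that $K$ forbids and the inclusion $\mathcal{P}_d(\wt K)\subseteq\mathcal{P}_d(K)$ would break. Everything else is bookkeeping; in particular no compactness of configuration spaces is needed, only the elementary fact that a periodic packing has a countable distance set.
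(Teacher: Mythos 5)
Your proof is correct, and it is essentially the same idea as the paper's: collect the countably many distances actually realized by (near-)optimal admissible packings, throw in $1$ and $\sup K$, and check both inequalities. The paper takes a slightly shorter route: it invokes Proposition~\ref{prop:existence} to get a single optimal packing $P=X+\bd$ with $\dens(P)=\Delta_d(K)$, notes that $X$ is countable, and sets $\wt K=\{1,\max K\}\cup\{\alpha\in K: \alpha=|x_i-x_j|\ \text{for some}\ i<j\}$; then $\Delta_d(\wt K)\le\Delta_d(K)=\dens(P)\le\Delta_d(\wt K)$. Your version replaces the optimal packing by a maximizing sequence of periodic packings, which buys you independence from Proposition~\ref{prop:existence} (and hence from the compactness/diagonal argument behind it), at the cost of a union over $n$ and a limsup. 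Both are fine.

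One remark on your motivating ``key observation'': it is not true that a general $K$-admissible packing may realize uncountably many distances. The center set $X$ of any sphere packing is uniformly discrete (distinct centers are at distance $\ge 2r$), hence countable, so its distance set is countable as well — this is exactly what the paper's proof uses. Your restriction to periodic packings is therefore unnecessary for countability; any maximizing sequence of admissible packings (or a single optimal one) would do. This does not affect the validity of your argument, only its stated rationale. Your emphasis on keeping $\sup\wt K=\sup K$ is well placed and matches the paper's inclusion of $\max(K)$ in $\wt K$.
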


\begin{proof}
Since $K$ is compact, there is a packing $P=X+\bd$ such that $\dens(P)=\Delta_d(K)$. Since $X$ is countable, we can write $X=\{x_1,x_2,...\}$. Define the set
\begin{equation*}
    K_0 =\{\al \in K : |x_i-x_j|=\al \text{ for some } i < j\}.
\end{equation*}
Define $\wt{K}:=K_0\cup\{1,\max(K)\}$. Then, $\wt{K}\subset K$ is a countable subset such that $\max( K)=\max(\wt K)$, and by construction, $P$ is $\wt K$-admissible. We have
$$
\Delta_d(\wt K)\leq \Delta_d(K) = \dens(P) \leq \Delta_d(\wt K).
$$
This concludes the proof.
\end{proof}

Define
\begin{equation*}
    n_d(K):=\max \{\#X : X\subset \r^d \text{ and } |x-y|\in K \text{ for all } x,y\in X \text{ with } x\neq y\} .
\end{equation*}
Since $K$ is bounded, it is clear that $X$ is finite and any maximal set $X$ (which always exist) can be placed inside a sphere of radii $\sup(K)$. For instance, if $K=\{1\}$, then $n_d(K)=d+1$, and this is realized by the $(d+1)$-simplex. Let $\kiss_d$ denote the kissing number of $\r^d$-that is, the largest number of equal size spheres that can touch a central sphere with no overlapping. Then it is easy to see that 
$$
n_d([1,2]) \geq 1+\kiss_d.
$$

\begin{conjecture}
For all $d$,  we have $n_d([1,2]) = 1+\kiss_d$.
\end{conjecture}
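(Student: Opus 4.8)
\emph{Proof proposal.} The inequality $n_d([1,2]) \geq 1 + \kiss_d$ is the easy direction already noted: scale a kissing configuration so that the $\kiss_d$ outer sphere-centres lie on the unit sphere. They then have pairwise distances in $[1,2]$ (at least $1$ by the kissing constraint, at most $2$ since they lie on a unit sphere), each is at distance $1$ from the origin, and adjoining the origin produces a $[1,2]$-configuration of size $1 + \kiss_d$. The whole content is the reverse inequality $n_d([1,2]) \leq 1 + \kiss_d$. This is trivial for $d = 1$, since four collinear points $a_1 < a_2 < a_3 < a_4$ would force $a_4 - a_1 \geq 3$, whence $n_1([1,2]) = 3 = 1 + \kiss_1$.

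For $d \geq 2$ the obvious attempt fails. Let $X$ be a maximal $[1,2]$-configuration, fix $x_0 \in X$, and radially project the remaining points onto the unit sphere about $x_0$, in the hope of landing on a kissing configuration. Writing $a = |x - x_0|$, $b = |y - x_0|$, $c = |x - y|$, the projected unit vectors have inner product $(a^2 + b^2 - c^2)/(2ab)$, and demanding this be $\leq \tfrac12$ is equivalent to $c^2 \geq a^2 - ab + b^2$; as $a, b$ range over $[1,2]$ the right-hand side ranges over $[1,4]$, so this is a genuine constraint on $c$ that is violated the moment two points of $X$ lie far from $x_0$ but close to one another. Invoking Jung's theorem to put $X$ in a ball of radius $< \sqrt{2}$ and projecting from its centre onto the bounding sphere, then applying the Delsarte--Goethals--Seidel spherical-code bound, fails for the same reason: points in the interior of the ball, or clustered along one direction, project to nearly coincident points and destroy the angular separation.

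What I believe is actually needed is a structural reduction: show that some maximal $[1,2]$-configuration $X$ consists of a point $x_0$ together with the remaining $\#X-1$ points lying on a single sphere of some radius $\rho \in [1,2]$ centred at $x_0$. On that sphere these points form a spherical code whose pairwise angular distance is at least $2\arcsin(1/(2\rho))$ (from the lower bound $1$ on Euclidean distances) and \emph{at most} $2\arcsin(1/\rho)$ (from the upper bound $2$, which becomes an active constraint as soon as $\rho > 1$). One would then have to show that the resulting two-sided spherical-code optimum is largest at $\rho = 1$ --- where the upper constraint disappears and the lower one reads ``angular distance $\geq 60^{\circ}$'', i.e.\ exactly the kissing problem --- so that $\#X - 1 \leq \kiss_d$.

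The main obstacle is this structural reduction together with the comparison across $\rho$: it is not clear that an extremal $[1,2]$-configuration has the ``centre plus spherical code'' shape, the naive moves that push points onto a sphere destroy admissibility, and even the one-parameter comparison (larger $\rho$ buys a smaller minimal angle but imposes a maximal angle) is delicate. Moreover, even granting all of this, the conjecture is only as strong as our knowledge of $\kiss_d$, which is exact solely for $d \in \{1,2,3,4,8,24\}$; in those dimensions --- and especially $d = 8, 24$, where the linear programming bound for $\kiss_d$ is sharp --- the scheme above would give an unconditional proof, while in the remaining dimensions it would at best reduce the conjecture to the determination of the kissing number.
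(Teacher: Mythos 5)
The statement you are addressing is stated in the paper as a \emph{conjecture}, and the paper offers no proof of it: it only records the easy inequality $n_d([1,2]) \geq 1+\kiss_d$ (via exactly the rescaled-kissing-configuration argument you give) and remarks that equality holds trivially for $d=1$, seems to hold for $d=2$, and is ``unlikely to be false'' for $d=3$. So there is no proof in the paper to compare yours against, and your proposal does not supply one either --- as you yourself acknowledge. What you have actually established is the lower bound (already noted in the paper) and the case $d=1$; the entire content of the conjecture, namely $n_d([1,2]) \leq 1+\kiss_d$ for $d\geq 2$, remains open in your write-up.

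Your diagnosis of why the naive arguments fail is sound: radial projection from a point $x_0$ onto the unit sphere does not preserve the $60^\circ$ separation because the constraint $c^2 \geq a^2-ab+b^2$ can fail for admissible $a,b,c\in[1,2]$, and Jung's theorem plus Delsarte--Goethals--Seidel breaks for the same reason. Your proposed structural reduction (extremal configuration $=$ centre plus a spherical code on a sphere of radius $\rho\in[1,2]$, followed by a monotonicity-in-$\rho$ comparison) is a plausible strategy, but both steps are unproven and, as you note, the reduction itself is not obviously true: nothing forces an extremal $[1,2]$-configuration to have a distinguished centre, let alone to have all other points equidistant from it. (For instance, already the question of whether one may assume all points lie on a common sphere is unclear.) You should also be careful with the closing remark: even if the scheme worked, it would prove $n_d([1,2]) = 1+\kiss_d$ as an identity regardless of whether $\kiss_d$ is known explicitly; the conjecture does not become ``conditional on determining $\kiss_d$'' --- it is an equality of two quantities, not a numerical evaluation. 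In short: no gap in honesty, but no proof either; the conjecture remains a conjecture.
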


\noindent Trivially, this is attained for $d=1$. It seems to be the case for $d=2$ and unlikely to be false for $d=3$.  It turns out that the number $n_d(K)$ can be extracted from a constrained packing problem if one sets $K_\la = K \cup \{\la\}$ and sends $\la\to\infty$.

\begin{proposition}\label{prop:limitcase}
Let $K \subset [1,\infty)$ be bounded with $1\in K$ and let
$
K_\la = K \cup \{\la\}.
$
Then
$$
\lim_{\la\to\infty} \la^d \Delta_d(K_\lambda)=n_d(K)\Delta_d
$$
\end{proposition}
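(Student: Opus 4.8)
The plan is to prove matching asymptotic lower and upper bounds for $\la^d\Delta_d(K_\la)$, both tending to $n_d(K)\Delta_d$ as $\la\to\infty$. The structural fact driving everything is that, once $\la>2\sup(K)$, any $K_\la$-admissible configuration is \emph{two-scale}: since the set of admissible distances is $\{0\}\cup K\cup[\la,\infty)$ and $K\subset[1,\sup(K)]$, every distance between two distinct centers is either at most $\sup(K)$ (hence lies in $K$) or at least $\la$, with nothing allowed in between. Throughout I use $\Delta_d=\Delta_d(\{1\})$ and Proposition \ref{prop:periodic_packings_approximate_Delta} to pass freely between densities and counts of separated points in large boxes.

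\textbf{Lower bound.} First I would pick, using Proposition \ref{prop:periodic_packings_approximate_Delta} with $K=\{1\}$, a periodic packing $P_0=\Lambda_0+Y_0+\bd$ of minimal distance $1$ and with $\dens(P_0)$ as close as we like to $\Delta_d$. Then fix a set $C_0\subset\r^d$ of $n_d(K)$ points all of whose pairwise distances lie in $K$ (it exists by definition of $n_d(K)$), so $\operatorname{diam}(C_0)\le\sup(K)$. Set $\mu=\la+2\sup(K)$ and $X=\mu(\Lambda_0+Y_0)+C_0$. A short triangle-inequality check shows $X+\bd$ is a sphere packing which is $K_\la$-admissible: two centers from the same translate $\mu z+C_0$ are at a distance lying in $K$, while two centers from different translates are at distance at least $\mu-\sup(K)=\la+\sup(K)>\la$. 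Since $X$ is $\mu\Lambda_0$-periodic with $n_d(K)\cdot\#Y_0$ centers per period (for $\la$ large the translates $\mu y+C_0$, $y\in Y_0$, are disjoint and distinct modulo $\mu\Lambda_0$), we get $\dens(X+\bd)=n_d(K)\mu^{-d}\dens(P_0)$, hence $\la^d\Delta_d(K_\la)\ge(\la/\mu)^d\,n_d(K)\dens(P_0)\to n_d(K)\dens(P_0)$ as $\la\to\infty$. Letting $\dens(P_0)\uparrow\Delta_d$ yields $\liminf_{\la\to\infty}\la^d\Delta_d(K_\la)\ge n_d(K)\Delta_d$.

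\textbf{Upper bound.} Given a $K_\la$-admissible packing $P=X+\bd$ with $\la>2\sup(K)$, partition $X$ into the connected components of the graph joining two centers whenever their distance is at most $\sup(K)$. The key claim is that each component $C_i$ has diameter at most $\sup(K)$: otherwise, choosing $x,y\in C_i$ at distance $>\sup(K)$ and a shortest path $x=z_0,\dots,z_k=y$ in $C_i$, the first index $j$ with $|z_0-z_j|>\sup(K)$ satisfies $|z_0-z_j|\le|z_0-z_{j-1}|+|z_{j-1}-z_j|\le 2\sup(K)<\la$, contradicting the two-scale dichotomy. Hence $\#C_i\le n_d(K)$, and by the two-scale structure distinct components are at distance $\ge\la$ from one another. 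Picking one representative $p_i\in C_i$ per component, the $p_i$ are $\rho$-separated with $\rho=\la-2\sup(K)$, so by Proposition \ref{prop:periodic_packings_approximate_Delta} (with $K=\{1\}$, after rescaling by $\rho$) the number of components meeting a box $tQ_d$ is at most $\Delta_d\,\vol(tQ_d)\,\rho^{-d}\vol(\bd)^{-1}(1+o_t(1))$. Feeding this back through Proposition \ref{prop:periodic_packings_approximate_Delta} for $K_\la$ gives $\Delta_d(K_\la)\le n_d(K)\Delta_d\,\rho^{-d}$, whence $\limsup_{\la\to\infty}\la^d\Delta_d(K_\la)\le\limsup_{\la\to\infty}(\la/\rho)^d\,n_d(K)\Delta_d=n_d(K)\Delta_d$. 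Combining the two bounds proves the proposition.

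The step I expect to be the main obstacle is the diameter claim in the upper bound, that is, ruling out large or ``chain-like'' components: a priori a $K_\la$-admissible set could contain a long connected string of centers whose extremities are at distance $\ge\la$, and such a string need not consist of boundedly many points, which would break the counting. The resolution is precisely the threshold $\la>2\sup(K)$ together with the triangle inequality as above; the rest is routine bookkeeping with Proposition \ref{prop:periodic_packings_approximate_Delta}.
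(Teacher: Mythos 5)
Your proposal is correct and follows essentially the same route as the paper: the lower bound plants a maximal $K$-cluster of $n_d(K)$ points at the centers of a near-optimal unconstrained packing at scale $\approx\la$, and the upper bound exploits the two-scale dichotomy (distances either in $K$ or $\geq\la$) to decompose any admissible configuration into clusters of diameter $\leq\sup(K)$, hence of size $\leq n_d(K)$, whose representatives are $\la$-separated. The paper phrases your connected-component/diameter argument as the transitivity of the relation $y_1\sim y_2\iff|y_1-y_2|\leq\sup(K)$, but the underlying triangle-inequality step is identical, and your slightly lossier constants ($\la+2\sup(K)$ and $\la-2\sup(K)$ in place of the paper's $\la+\sup(K)$ and $\la$) wash out in the limit.
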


\begin{proof}
First, we claim that $\Delta_d(K_\lambda)\geq\frac{n_d(K)\cdot \Delta_d}{\left(\beta+\lambda\right)^d}$, where $\be=\sup{K}$. In order to do that, we will construct a packing $P_\la$ that is $K_\la$-admissible and show that $\dens(P_\lambda)\geq\frac{n_d(K)\cdot \Delta_d}{\left(\beta+\lambda\right)^d}$. The packing $P_\lambda$ will not necessarily have maximal density; however it will have a nice structure which makes it easy to estimate its density.  Let $Y\subset \be B_d$ be a maximal cluster of $K$-admissible points attaining $\#{Y}=n_d(K)$.  Let $\wt{P_\lambda}=X_\lambda+\frac{\be+\la}{2} B_d$ be an unconstrained periodic sphere packing (with spheres of diameter $\be+\la$) such that $\dens(\wt{P_\lambda})>\Delta_d-\ep$. Define the packing 
$$
P_\lambda = X_\la + Y+\bd 
$$
We claim $P_\la$ is $K_\la$ admissible. To see this note that if $x_\la+y$ and $x_\la'+y'$ are two points in $X_\la+Y$, then their distance is $\geq \la$ if $x_\la\neq x_\la'$. If $x_\la=x_\la'$, then their distance is $|y-y'|\in K$. We obtain
\begin{align*}
  \dens(P_\la) &  = \lim_{t\to\infty} \frac{\#((X_\la+Y)\cap tQ_d) \vol(\bd)}{\vol(tQ_d)} \\ & =  \lim_{t\to\infty} \frac{\#(X_\la\cap (t-2\be -1)Q_d) n_d(K)\vol(\frac{\be+\la}{2} B_d)}{(\be+\la)^d\vol(tQ_d)} \\
  & = \frac{n_d(K)\dens(\wt{P_\lambda})}{(\be+\la)^d} > \frac{n_d(K)(\Delta_d-\ep)}{(\be+\la)^d}.
\end{align*}
Since both $\wt{P_\la}$ and $P_\la$ are periodic, equality between limits above is justified. Letting $\ep\to 0$ proves our claim.

Now we claim that $\Delta_d(K_\lambda)\leq\frac{n_d(K)\cdot\Delta_d}{\lambda^d}$.  Let $Y+\bd$ be a periodic $K_\la$-admissible sphere packing such that $\dens(Y_\la)>\Delta_d(K_\la)-\ep$. We can assume that $\la>2\be$.  Define an equivalence relation in $Y$ by saying that $y_1 \sim y_2$ if $|y_1-y_2|\leq \be$. This is an equivalence relation since if $y_1\sim y_2$ and $y_2\sim y_3$ but $|y_1-y_3|>\be$, then $|y_1-y_3|\geq \la$, but triangle inequality shows that $|y_1-y_3|\leq 2\be < \la$, a contradiction. Let $\wt Y=\{[y_1],[y_2],...\}$ be these equivalence classes, where the $y_j$'s are representatives of each class. Observe that $|y_i-y_j|\geq \la$ if $i<j$, that the set $[y_j]$ has only distances in $K$ and is contained in $y_j + {\be}B_d$. Thus, $\#[y_j]\leq n_d(K)$. We obtain
\begin{align*}
 \Delta_d(K_\lambda) -\ep & <  \lim_{t\to\infty} \frac{\#(Y\cap tQ_d) \vol(\bd)}{\vol(tQ_d)} \\ & \leq \limsup_{t\to\infty} \frac{ \vol(\bd) \sum_{ [y_j]\cap tQ_d \neq \emptyset } \#[y_j]}{\vol(tQ_d)} \\ & \leq \frac{n_d(K)}{\la^d} \limsup_{t\to\infty} \frac{\#(\{y_1,y_2,...\}\cap (t+2\be)Q_d) \vol(\tfrac{\la}{2}B_d)}{\vol(tQ_d)} \\
 & \leq \frac{n_d(K)}{\la^d} \Delta_d.
\end{align*}
Letting $\ep\to 0$ finishes the proof.
\end{proof}

Observe that we have actually proven the stronger result
$$
\frac{n_d(K)\cdot \Delta_d}{\left(\beta+\lambda\right)^d} \leq \Delta_d(K_\lambda) \leq \frac{n_d(K)\cdot \Delta_d}{\la^d}.
$$
In particular, in conjunction with Theorem \ref{thm:LP-bounds}, we have the new linear program to produce upper bounds for kissing numbers.

\begin{corollary}
Let $\la\geq 4$. Let $F:\r^d\to\r$ be a continuous $L^1$-function such that $\ft F\geq 0$ and $F(x)\leq 0$ if $1<|x|<2$ or if $|x|>\la$. Then 
$$
1+\kiss_d \leq \frac{(2+\la)^d}{\Delta_d} \frac{F(0)}{\ft F(0)} \vol(\bd).
$$
\end{corollary}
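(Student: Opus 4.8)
The idea is to combine three ingredients that are already in place: the elementary inequality $1+\kiss_d \le n_d([1,2])$ recorded just above the statement, the two–sided estimate $\frac{n_d(K)\Delta_d}{(\beta+\lambda)^d}\le \Delta_d(K_\lambda)\le\frac{n_d(K)\Delta_d}{\lambda^d}$ established in (and just after) the proof of Proposition \ref{prop:limitcase}, and the linear programming bound of Theorem \ref{thm:LP-bounds}. For completeness I would first re-derive $1+\kiss_d\le n_d([1,2])$: place a sphere of radius $r$ at the origin together with $\kiss_d$ mutually non-overlapping spheres of radius $r$ tangent to it; the central-to-outer center distances equal $2r$, while two distinct outer centers lie at distance in $[2r,4r]$ by non-overlap and by the triangle inequality through the origin. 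Rescaling by $(2r)^{-1}$ and adjoining the origin produces $1+\kiss_d$ points with all pairwise distances in $[1,2]$, whence $1+\kiss_d\le n_d([1,2])$.

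Next I would specialize the lower bound in the estimate above to $K=[1,2]$, so that $\beta=\sup(K)=2$ and $K_\lambda=[1,2]\cup\{\lambda\}$, obtaining $n_d([1,2])\le \frac{(2+\lambda)^d}{\Delta_d}\,\Delta_d(K_\lambda)$ (here $\Delta_d>0$, so dividing is harmless). It remains to bound $\Delta_d(K_\lambda)$. Since $\lambda\ge 4>2$ we have $\sup(K_\lambda)=\lambda$, and therefore, as a set of admissible radii, $K_\lambda\cup(\sup K_\lambda,\infty)=[1,2]\cup[\lambda,\infty)$. The function $F$ of the statement is continuous, lies in $L^1$, has $\ft F\ge 0$, and satisfies $F\le 0$ on $\{1<|x|<2\}$ and on $\{|x|>\lambda\}$; by continuity of $F$ this forces $F\le 0$ on the closures $\{1\le|x|\le 2\}$ and $\{|x|\ge\lambda\}$, i.e. on exactly the set of radii appearing in $K_\lambda\cup(\sup K_\lambda,\infty)$. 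Hence $F$ is admissible in the infimum defining $\Delta_d^{\LP}(K_\lambda)$, so Theorem \ref{thm:LP-bounds} gives $\Delta_d(K_\lambda)\le\Delta_d^{\LP}(K_\lambda)\le\vol(\bd)\,F(0)/\ft F(0)$ (if $\ft F(0)=0$ the asserted inequality is vacuous, so we may assume $\ft F(0)>0$).

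Chaining the three bounds yields
$$
1+\kiss_d\ \le\ n_d([1,2])\ \le\ \frac{(2+\lambda)^d}{\Delta_d}\,\Delta_d(K_\lambda)\ \le\ \frac{(2+\lambda)^d}{\Delta_d}\,\frac{F(0)}{\ft F(0)}\,\vol(\bd),
$$
which is the claim. I do not expect any real obstacle here: the only two points that deserve a sentence are the passage to the closed sign-condition regions via continuity of $F$ (Theorem \ref{thm:LP-bounds} is stated with the constraint on the closed set $\{|x|\in K\cup(\sup K,\infty)\}$), and the bookkeeping that the constant $\beta+\lambda$ coming out of Proposition \ref{prop:limitcase} is precisely $2+\lambda$ when $K=[1,2]$.
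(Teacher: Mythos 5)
Your proposal is correct and is exactly the argument the paper intends: the corollary is stated as an immediate consequence of the chain $1+\kiss_d\le n_d([1,2])$, the lower bound $\frac{n_d(K)\Delta_d}{(\beta+\lambda)^d}\le\Delta_d(K_\lambda)$ recorded after Proposition \ref{prop:limitcase} with $K=[1,2]$, $\beta=2$, and Theorem \ref{thm:LP-bounds} applied to $K_\lambda=[1,2]\cup\{\lambda\}$. Your extra remarks (passing to closed sign regions by continuity, and the harmless case $\ft F(0)=0$) are correct bookkeeping that the paper leaves implicit.
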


The structure of the best configuration for $d=2$, $K=[1,2]\cup \{\la\}$ and large $\la$ will look like Figure \ref{fig:la4}.

\begin{figure}[h]
\includegraphics[scale=.6]{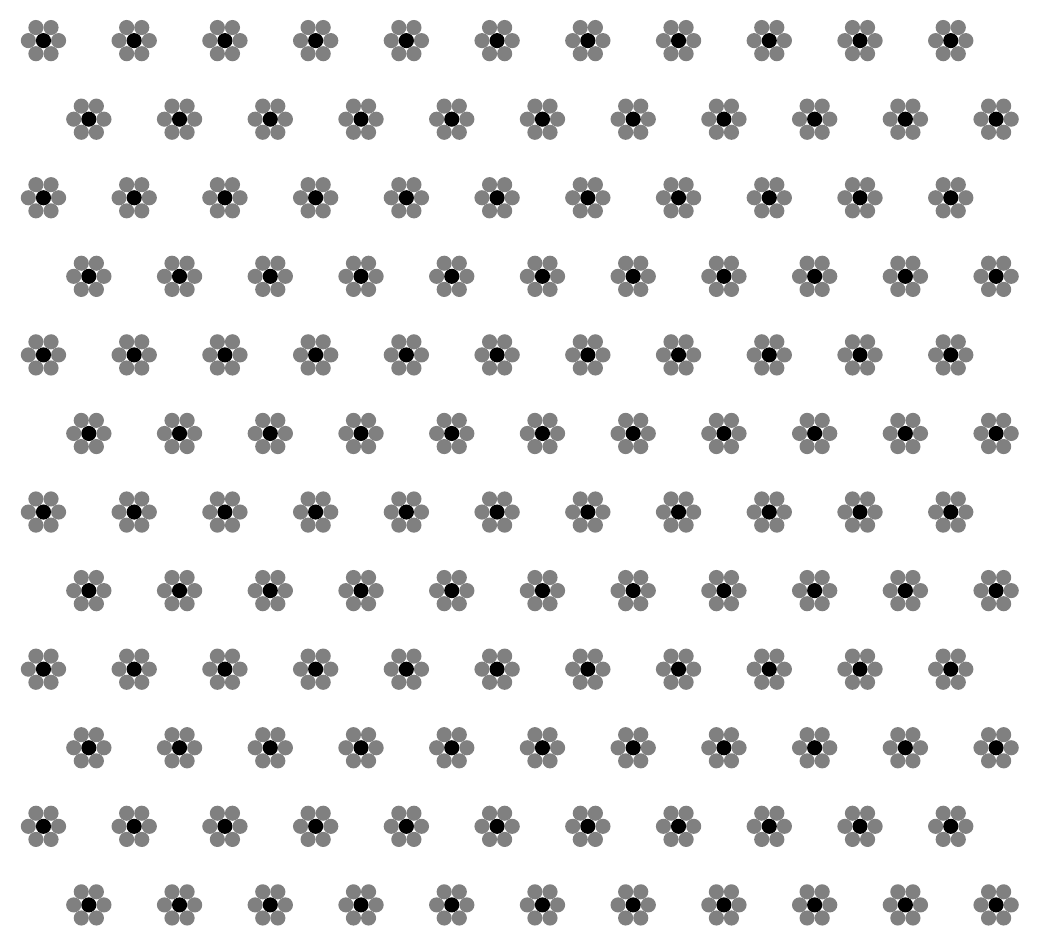}
\caption{Best configuration for  $d=2$ and $K=[1,2]\cup \{4\}$.}\label{fig:la4}
\end{figure}

As a proof of concept, we now prove that this linear program is sharp for $d=1$. In this case, we have the inequality $3\leq (2+\la)F(0)/\ft F(0)$. Define the following symmetric set
$$
E_\la=\bigcup_{n=-N}^{N}[3n-1/2,3n+1/2] 
$$
with $N=\lfloor (\la-1)/6\rfloor$. Then it is easy to see that $F_\la={\rm 1}_{E_\la}\star {\rm 1}_{E_\la} $ (where $\star$ is the convolution operator) is positive definite and $F_\la(x)=0$ if $|x|>\la$ or if $1<|x|<2$, since $E_\la+E_\la =\cup_{n=-2N}^{2N}[3n-1,3n+1] $. Moreover, $F_\la(0)/\ft F_\la(0)=1/\vol(E_\la)=1/(1+2N) \sim 3/\la$. We conclude that $\lim_{\la\to\infty} (2+\la)F_\la(0)/\ft F_\la(0) = 3$, proving the bound is sharp.

\subsection{Proof of Theorem \ref{thm:LP-bounds}.}
In view of Proposition \ref{prop:periodic_packings_approximate_Delta}, it suffices to consider just periodic sphere packings $P=Y+\Lambda+\bd $. Assume first $F$ is of Schwartz class. We apply Poisson summation to obtain
$$
(\#Y) F(0) \geq \sum_{y,y'\in Y}\sum_{\la \in \Lambda} F(\la+y-y') = \frac{1}{\vol(\r^d/\Lambda)}\sum_{\xi \in \Lambda^*} \ft F(\xi) \left| \sum_{y\in Y} e^{2\pi iy\cdot \xi}\right|^2 \geq \frac{(\#Y)^2\ft F(0)}{\vol(\r^d/\Lambda)}.
$$
Rearranging terms, we deduce that $\dens(P)\leq \vol(\bd)F(0)/\ft F(0)$. In the unconstrained packing problem one could now, by a standard convolution approximation argument, show the same inequality holds when $F$ is only continuous and $L^1$. However, this trick does not work for constrained sphere packings, since convolutions destroy $K$-admissibility of functions. Instead, we give below a direct proof based on a new trick involving the Féjer kernel.

Let $A$ be a full rank matrix such that $\Lambda=A\cdot\z^d$ and let $$f_\Lambda(x)=\displaystyle\sum_{\lambda\in\Lambda} F(x+\lambda).$$
Then we see that $f_\Lambda$ is $\Lambda-$periodic, that the summation above converges a.e. and that $f_\Lambda\in L^1(\r^d/\Lambda)$. Note also that $$f_\Lambda\circ A(x)=\displaystyle\sum_{k\in\z^d} F\circ A(x+k).$$
Observe that $f_\Lambda\circ A \in L^1(\r^d/\z^d)$ and $(f_\Lambda\circ A)^\wedge (\alpha)= |\det(A)|^{-1} \ft F\circ A^{-\top}(\al)$, where $\vol(\r^d/\Lambda)=|\det(A)|$ and we use $\top$ for transpose.
For $N$ a positive integer, consider the $N$-th Féjer kernel
\begin{equation*}
    \F_N(x)={\bf 1}_{Q_d}(x)\displaystyle\sum_{\substack{\al\in\z^d\\ |\al|_\infty<N}} \displaystyle\prod_{j=1}^{d} \left(1-\displaystyle\frac{|\al_j|}{N}\right)e^{2\pi i\al\cdot x} =  N^d \prod_{j=1}^d \left( \frac{\sin(\pi N x_j)}{N\sin(\pi x_j)} \right)^2 {\bf 1}_{|x_j|<1/2}
\end{equation*}
Denote by $\star$ the convolution operator. It is well known that the $\F_N$ is an approximate identity, that is, $\F_N\star G(x)\to G(x)$ as $N\to\infty$, for any $x$ and any bounded continuous function $G$. The nice bit is that if $G$ is $\z^d$-periodic, then convolution simply multiplies Fourier coefficients. Routine computations show that
$$
(f_\Lambda\circ A) \star \F_N(x) = \frac{1}{\vol(\r^d/\Lambda)}\sum_{\substack{\al\in\z^d\\ |\al|_\infty<N}} \ft F\circ A^{-\top}(\al) \displaystyle\prod_{j=1}^{d} \left(1-\displaystyle\frac{|\al_j|}{N}\right)  e^{2\pi i \al \cdot x},
$$
is indeed a trigonometric polynomial. We conclude that
\begin{align*}
\sum_{y,y'\in Y}  (f_\Lambda\circ A) \star \F_N(A^{-1}(y-y')) \geq \frac{(\#Y)^2}{\vol(\r^d/\Lambda)} \ft F(0)
\end{align*}
However, letting $m:=\max_{y\neq y'} {|y-y'|}+\sup(K)+\max_{x\in \r^d/\Lambda} |x|$, the left-hand side above is equal to
\begin{align*}
    &\sum_{y,y'\in Y}\displaystyle\int_{Q_d} \F_N(x)\left(f_\Lambda\circ A\right)\left(A^{-1}(y-y')-x\right)dx\\
     & = \frac{1}{|\det(A)|}\displaystyle\sum_{y,y'\in Y}\displaystyle\int_{\r^d/\Lambda} \F_N(A^{-1}x)f_\Lambda\left(y-y'- x\right)dx\\
    &\leq\displaystyle\frac{1}{\vol(\r^d/\Lambda)}\displaystyle\sum_{y,y'\in Y}\displaystyle\sum_{\substack{\lambda\in\Lambda\\
    |\lambda|\leq m}} \displaystyle\int_{\r^d/\Lambda} \F_N(A^{-1}x)F(y-y'-x+\lambda)dx\\
    &=\displaystyle\sum_{y,y'\in Y}\displaystyle\sum_{\substack{\lambda\in\Lambda\\
    |\lambda|\leq m}} \F_N\star(F\circ A)(A^{-1}(y-y'+\lambda))\\
    &\xrightarrow{N\rightarrow\infty} \displaystyle\sum_{y,y'\in Y}\displaystyle\sum_{\substack{\lambda\in\Lambda\\
    |\lambda|\leq m}} (F\circ A)(A^{-1}(y-y'+\lambda))\\
    &=\displaystyle\sum_{y,y'\in Y}\displaystyle\sum_{\substack{\lambda\in\Lambda\\
    |\lambda|\leq m}} F(y-y'+\lambda)\\
    &\leq \#Y\cdot F(0).
\end{align*}
The limit above is justified because we have a finite sum. The theorem follows. \qed

\section{One-dimensional packings and dominos}\label{sec:1D}

We now focus on the one-dimensional case and we ask for the existence of periodic packings with the maximal density. Let $I=[0,1]$ be the unit interval. We begin with the observation that we can restrict our attention to packings of the half-line $[0,\infty)$ only; that is,
$$
\Delta_1(K)=\sup_{P \in \P_1(K) \text{ and } P\subset [0,\infty)} \dens^+(P),
$$
where $\dens^+(P)=\limsup_{t\to\infty} \vol(P\cap [0,t])/t$.
Indeed, if $P\subset (-\infty,\infty)$ is a $K$-admissible periodic packing, then $$\dens(P) \leq \max\{\dens^+(P\cap \r_+),\dens^+(P \cap \r_-)\}.$$ Now note that for any $K$-admissible packing of $[0,\infty)$, we can assume that $I$ starts at $0$ and so the packing can be uniquely described as a sequence
$$
w=a_{1}a_{2}...a_{k}...,
$$
where $a_j\in K\cup (\sup(K),\infty)$  represent the distance between consecutive centers of intervals in the packing. From now on, we assume that $\sup(K)\in K$, and we set $N:=\lceil\sup(K)\rceil$. Hence, we can always assume that $a_j\in K$ for all $j$ since otherwise, we can reduce that distance without destroying $K$-admissibility and  obtain a denser packing. For simplicity, we just write $\pazocal{P}_+(K)$ for $K$-admissible packings of $[0,\infty)$ with distances between adjacent centers of intervals drawn from $K$ that start with $I$ touching $0$.

A \emph{word} will be a finite ordered sequence of elements of the alphabet $\Sigma=K$. For instance, $w=a_{1}a_{2}...a_{k}$ is a word of \emph{length} $\#w=k$. It will be useful to consider the \emph{empty} word, denoted by $\varnothing$, which has no elements and has length $\#\varnothing=0$. We can also define the \emph{norm} of $w$ by
$$
|w|=\sum_{i=1}^k a_i.
$$
We denote by $\Sigma^*$ the set of all finite words. We will now construct a domino set $\D$ by
\begin{align}
    \D_K:=\big\{(w,w')\in\Sigma^*\times\Sigma^*: & \#w=\#w'=N \text{ and any subword } {s} \text{ of }  ww' \\ & \text{ satisfies } |s| \in K \cup (\sup(K),\infty) \big\}.
\end{align}
Thus, $\D_K\subset \Sigma^* \times \Sigma^*$. Here, $w{w'}$ is the concatenated word. A linear domino tiling from $\D_K$ is an infinite word 
$$
\gamma=w_1w_2w_3...
$$
where $(w_i,w_{i+1})$ is a domino piece from $\D_K$. We let $\T(\D_K)$ be the set of domino tiling built this way. Given a tiling $\ga\in \T(\D_K)$, we let
$$
\dens(\ga) = \limsup_{t\to\infty} \frac{\#(w_1...w_t)}{|w_1....w_t|}.
$$
We say $\ga$ is periodic if for some $n$ we have $w_{i+n}=w_i$ for all $i$.

\begin{proposition}
    \label{prop:isomorphism_packings_and_tillings}
    There is a canonical bijection between $\P \in \pazocal{P}_+(K)\mapsto \ga_P \in \T(\D_K)$. This bijection maps periodic to periodic and satisfies
    $$
    \dens^+(P)=\dens(\ga_P).
    $$
\end{proposition}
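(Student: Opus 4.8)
The plan is to realize the bijection by \emph{blocking}. Given an (infinite) packing $P\in\pazocal{P}_+(K)$, write it as its distance word $w=a_1a_2a_3\cdots$ with every $a_j\in K$ (legitimate since $\pazocal{P}_+(K)$ uses the alphabet $\Sigma=K$), and cut the letters into consecutive blocks of length $N=\lceil\sup(K)\rceil$, setting $w_i:=a_{(i-1)N+1}\cdots a_{iN}$ and $\ga_P:=w_1w_2w_3\cdots$. First I would check that $\ga_P\in\T(\D_K)$: each $w_i$ has length exactly $N$, and for every $i$ the concatenation $w_iw_{i+1}$ is literally a factor of $w$, so any subword of $w_iw_{i+1}$ is a subword of $w$ and hence has norm in $K\cup(\sup(K),\infty)$ because $P$ is $K$-admissible — indeed the norm of a subword of $w$ is precisely the distance between the two corresponding centers of $P$. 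Thus $(w_i,w_{i+1})\in\D_K$ for all $i$, so $\ga_P$ is a genuine domino tiling.

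Conversely, given $\ga=w_1w_2w_3\cdots\in\T(\D_K)$, concatenate to a word $w=a_1a_2a_3\cdots$ over $K$ and let $P_\ga$ be the packing of $[0,\infty)$ with centers $x_0=0$ and $x_k=a_1+\cdots+a_k$. The real content is to show $P_\ga$ is $K$-admissible, i.e. every subword $s=a_p\cdots a_q$ of $w$ has $|s|\in K\cup(\sup(K),\infty)$. Let $i,j$ be the blocks containing $a_p,a_q$. If $j\leq i+1$, then $s$ is a subword of $w_iw_{i+1}$ and the domino condition $(w_i,w_{i+1})\in\D_K$ gives the claim at once. If $j\geq i+2$, then $s$ contains the entire block $w_{i+1}$, so $|s|\geq|w_{i+1}|\geq N\geq\sup(K)$ since each letter is $\geq1$; hence $|s|\in[\sup(K),\infty)=\{\sup(K)\}\cup(\sup(K),\infty)\subseteq K\cup(\sup(K),\infty)$, using $\sup(K)\in K$. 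Since blocking-then-concatenating and concatenating-then-blocking are visibly inverse to each other, this yields the bijection. The main obstacle — really the only step that needs an idea rather than bookkeeping — is exactly this verification that $P_\ga$ is $K$-admissible: the point is that the window length $N=\lceil\sup(K)\rceil$ is chosen large enough that any subword meeting three consecutive blocks automatically has norm $\geq\sup(K)$, so only the length-$2N$ windows impose real constraints, which is precisely what $\D_K$ records.

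Finally, periodicity and density. A packing $P\in\pazocal{P}_+(K)$ is periodic precisely when its distance word $w$ is purely periodic, of some period $p$; replacing $p$ by $\mathrm{lcm}(p,N)$ we may assume $N\mid p$, and then $\ga_P$ is purely periodic of period $p/N$, and conversely — so the bijection maps periodic to periodic. For the density, set $M(s)=\#\{k\geq0:x_k\leq s\}$; since consecutive centers lie at distance $\geq1$ one has $\vol(P\cap[0,s])=M(s)+O(1)$, hence $\dens^+(P)=\limsup_{s\to\infty}M(s)/s$. On the one hand $M(x_{tN})=tN+1$, so $\dens(\ga_P)=\limsup_{t}(tN)/x_{tN}\leq\dens^+(P)$. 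On the other hand, for arbitrary $s$ pick $t$ with $x_{tN}\leq s<x_{(t+1)N}$; then $M(s)\leq(t+1)N$ while $s\geq x_{tN}$, so $M(s)/s\leq\tfrac{t+1}{t}\cdot\tfrac{tN}{x_{tN}}$, and taking $\limsup$ (where $\tfrac{t+1}{t}\to1$) gives $\dens^+(P)\leq\dens(\ga_P)$. Therefore $\dens^+(P)=\dens(\ga_P)$, completing the proof.
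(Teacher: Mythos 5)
Your proposal is correct and follows exactly the paper's route: block the distance word into length-$N$ pieces with $N=\lceil\sup(K)\rceil$, observe that consecutive blocks form domino pieces, and invert by concatenation. The only difference is that you spell out the details the paper leaves implicit (in particular why a subword spanning three or more blocks automatically has norm $\geq\sup(K)$, and the two-sided density comparison), which is a faithful filling-in rather than a different argument.
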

\begin{proof}
    Indeed, given a packing $P\in\pazocal{P}_+(K)$, we can write $P$ as (infinite) sequence $a_1a_2...$, with $a_{j}\in K$. We then break this sequence into pieces of the form $w_{m+1}=a_{mN+1}...a_{(m+1)N}$, for $m\in\z_{\geq0}$, which are words belonging to $\Sigma^*$. By construction, the pairs $(w_m,w_{m+1})\in \D_K$, and then we can associate $P$ to the tiling $w_1w_2....\in\pazocal{T}(K)$. Clearly, this map is injective. To show it is surjective, let $\gamma=w_1w_2...\in\pazocal{T}(K)$ be any tiling. Then, by the choice of $N$, the packing $P$ whose sequence of distances between consecutive centers is given by the concatenation $w_1w_2w_3...$ belongs to $\pazocal{P}_+(K)$, and its image is the tiling $\gamma$. This establishes a bijection between $\P_+(K)$ and $\pazocal{T}$. Moreover, it is easy to check that $P$ is periodic if and only if $\ga_P$ is and that $\dens^+(P)=\dens(\ga_P)$ holds for any packing $P \in \P_+(K)$.
\end{proof}

In general, we define a linear domino game as a tuple $(\Sigma,\D)$, where $\Sigma$ is a set of symbols and $\D\subset \Sigma^*\times \Sigma^*$ are domino pieces,  where $\Sigma^*$ is the set of all finite words in the alphabet $\Sigma$. The set of linear domino tilings $\T(\D)$ are those infinite words such that $\gamma=w_1w_2w_3...$, where $(w_i,w_{i+1})\in \D$ for all $i$. We say that a map $f:\Sigma^* \to \r_+$ is a \emph{norm function} when it satisfies the following properties:

\begin{itemize}
\item[(a)] For any $w_1,w_2,w_3\in\Sigma^*$, the function $k \geq 0 \mapsto \frac{\#(w_1w_2^kw_3)}{f(w_1w_2^kw_3)}$ is monotone;
\item[(b)] The function $k>0\mapsto \frac{\#(w_2^k)}{f(w_2^k)}$ is a positive constant;
\item[(c)] For any $\epsilon>0$, there is $\delta>0$ such that if $w'$ is a sub-word of $w$ with $\#w-\#w'\leq\epsilon$, then $|f(w)-f(w')|\leq \delta$.
\end{itemize}
Note that if $f$ is a norm function, then
$$
\frac{\#(w_1w_2^kw_3)}{f(w_1w_2^kw_3)} = \frac{\#(w_2^k)+O(1)}{f(w_2^k)+O(1)} = \frac{\#(w_2^k)+O(1)}{\frac{f(w_2)}{\#w_2}\#(w_2^k)+O(1)} = \frac{\#w_2}{f(w_2)}+O(1/k)
$$
as $k\to\infty$. For a norm function $f$ and a linear domino tiling $\gamma=w_1w_2w_3...\in\T(\D)$, we let
$$
\dens_f(\ga)=\limsup_{t\to\infty} \frac{\#(w_1....w_t)}{f(w_1....w_t)}.
$$
In particular, $\dens_f(\be\be\be...)=\frac{\#\be}{f(\be)}$. This is a generalization of the scenario described before by taking $f=|\cdot|$ and $\D=\D_K$.

\begin{theorem}\label{thm:finitedomino}
Let $\Sigma$ be a set of symbols and $\D\subset \Sigma^*\times \Sigma^*$ be a finite domino set. Let $f:\Sigma^* \to \r_+$ be a norm function. Assume that $\sup_{\ga\in \T(\D)} \dens_f(\ga) \in (0,\infty)$. Then there exists a periodic linear domino tiling $\ga \in \T(\D)$, with period $\leq \#\D$, such that $$\dens_f(\ga)=\sup_{\al\in\T(\D)} \dens_f(\al).$$
\end{theorem}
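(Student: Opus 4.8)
\emph{Proof plan.} This is a maximum--cycle--ratio statement, and the natural approach is the classical reduction to optimal cycles in a finite digraph, with the axioms (a)--(c) substituting for additivity of edge weights. First I would encode $\D$ as a finite directed graph $G$: its vertices are the finitely many words occurring as a coordinate of some domino in $\D$, and each domino $(w,w')\in\D$ is a directed edge $w\to w'$. Then a linear domino tiling $\ga=w_1w_2w_3\dots$ is precisely an infinite walk in $G$, and a periodic tiling of period $n$ is a closed walk of length $n$; a \emph{simple} closed walk (one with pairwise distinct intermediate vertices) uses distinct edges and so has length $\le\#\D$. For a closed walk $C$ through the words $x_1,\dots,x_m$ set $\rho(C):=\dens_f\big((x_1\cdots x_m)^{\infty}\big)$, which by the computation displayed just before the statement of the theorem equals $\#(x_1\cdots x_m)/f(x_1\cdots x_m)$. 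As $G$ is finite there are only finitely many simple closed walks; let $C^{*}$ maximize $\rho$ among them and put $\rho_{+}:=\rho(C^{*})$. The periodic tiling $\ga_{C^{*}}$ then has period $\le\#\D$ and $\dens_f(\ga_{C^{*}})=\rho_{+}$, so it only remains to prove that $\dens_f(\ga)\le\rho_{+}$ for every $\ga\in\T(\D)$.

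The engine of the argument is an interpolation (``mediant'') lemma drawn from (a)--(c): for any words $v_1,\dots,v_r$ one has
$$
\frac{\#(v_1\cdots v_r)}{f(v_1\cdots v_r)}\ \in\ \Big[\ \min_{i}\tfrac{\#v_i}{f(v_i)}\,,\ \max_{i}\tfrac{\#v_i}{f(v_i)}\ \Big].
$$
To see this, first note — exactly as in the displayed computation — that $\#(w_1w_2^{k}w_3)/f(w_1w_2^{k}w_3)\to\#w_2/f(w_2)$ as $k\to\infty$, using (b) together with the near--additivity (c); hence by the monotonicity (a) the value at $k=1$ lies between its value at $k=0$, namely $\#(w_1w_3)/f(w_1w_3)$, and the limit $\#w_2/f(w_2)$. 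Taking $w_3$ to be the empty word gives the two--factor case, and an immediate induction on $r$ completes the lemma.

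Now fix $\ga=w_1w_2w_3\dots$ and a prefix $w_1\cdots w_t$, and run the standard loop--decomposition on the corresponding walk in $G$: one extracts a list of simple closed walks $C_1,\dots,C_r$ and is left with a simple path of length bounded independently of $t$, and, reading the prefix from left to right, this realizes $w_1\cdots w_t$ as a concatenation $P_0\,B_1\,P_1\,B_2\cdots B_r\,P_r$ in which each $B_k$ is one traversal of $C_k$ and $P_0P_1\cdots P_r$ is the path word, of total symbol length at most a constant $B$ not depending on $t$. Deleting the $P_i$'s removes at most $B$ symbols and, by (c), changes $f$ by at most a fixed $\de$, so
$$
\frac{\#(w_1\cdots w_t)}{f(w_1\cdots w_t)}\ \le\ \frac{\#(B_1\cdots B_r)+B}{f(B_1\cdots B_r)-\de}\,;
$$
applying the interpolation lemma to $B_1\cdots B_r$ (each $\#B_k/f(B_k)=\rho(C_k)\le\rho_{+}$) gives $f(B_1\cdots B_r)\ge\#(B_1\cdots B_r)/\rho_{+}$, while $\#(B_1\cdots B_r)=\#(w_1\cdots w_t)-O(1)\to\infty$, so the right--hand side is $\rho_{+}(1+o(1))$ as $t\to\infty$. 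Taking $\limsup_{t\to\infty}$ yields $\dens_f(\ga)\le\rho_{+}$, which finishes the proof.

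I expect the only genuinely delicate point to be the combinatorial bookkeeping in the loop--decomposition: checking that it really factors the prefix \emph{word} (not merely the walk) into the cycle blocks $B_k$ and a remainder, and that the cycles it produces are honestly simple; one then uses the finiteness of $\D$ twice — to bound the length of the remainder and to keep $f$ bounded away from $0$ on the finitely many simple cycles and remainder pieces — and combines this with axiom (c) to make the remainder negligible in the limit. Everything else (the equivalence between tilings and walks, the well--definedness of $\rho$ up to the bounded error in (c), and the reduction to periodic packings via Proposition~\ref{prop:isomorphism_packings_and_tillings}) is routine.
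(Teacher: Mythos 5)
Your overall strategy is the same as the paper's: encode $\D$ as a finite digraph, show that the density ratio of every closed walk is dominated by the best ratio among the simple closed walks (the paper calls these atoms), and then reduce an arbitrary tiling to closed walks. Your interpolation lemma is correct, and its proof --- comparing the value at $k=1$ with the value at $k=0$ and with the limit as $k\to\infty$ via axioms (a)--(c) --- is exactly the mechanism the paper exploits. The genuine gap is the step you yourself flag as delicate: the claim that the loop decomposition factors the prefix \emph{word} as a flat concatenation $P_0B_1P_1\cdots B_rP_r$ of contiguous simple-cycle blocks plus a short remainder. This is false in general, because the cycles extracted from a walk nest rather than concatenate. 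For instance, in the closed path $w_1w_2w_3w_4$ with $w_1=a$, $w_2=b$, $w_3=c$, $w_4=b$ (edges $(a,b),(b,c),(c,b),(b,a)\in\D$), the inner atom $b\to c\to b$ occupies positions $2,3$, while the outer atom $a\to b\to a$ is realized by positions $1$ and $4$; the latter is not a contiguous block of the word, so your flat interpolation lemma (which only glues blocks end to end) cannot be applied to it.

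The repair is precisely the three-block inequality you already derived en route to your lemma, $\#(v_1v_2v_3)/f(v_1v_2v_3)\le\max\bigl\{\#(v_1v_3)/f(v_1v_3),\,\#v_2/f(v_2)\bigr\}$: it lets you delete a contiguous \emph{middle} simple cycle $v_2$ (one always exists at the first repeated vertex) and recurse on the shorter, re-glued closed path $v_1v_3$. That induction on path length is the paper's proof of the bound $\#\beta/f(\beta)\le\max_{\al\in\A}\#\al/f(\al)$ for every closed path $\beta$; you only invoked the specialization $w_3=\varnothing$, which is too weak for the nested structure. The paper also avoids your open-walk bookkeeping altogether: it chooses a vertex recurring infinitely often in $\ga$, drops a prefix so that this vertex is $w_1$ (harmless by (c)), and appends to each prefix $w_1\cdots w_{m_n}$ a \emph{simple}, hence uniformly bounded, return path to $w_1$, producing a closed path to which the inductive bound applies directly; condition (c) then shows the $O(1)$ appended symbols do not affect the limiting ratio. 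With these two adjustments your argument coincides with the paper's.
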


The following corollary is an immediate consequence of the above theorem by using $\Sigma=K$ and $f=|\cdot|$.

\begin{corollary}
\label{existence_of_maximal_periodic_packing_finite_case}
    If $K$ is finite, then there exists a periodic $K$-admissible sphere packing of $\r$ of maximal density.
\end{corollary}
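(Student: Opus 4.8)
The plan is to apply Theorem~\ref{thm:finitedomino} to the linear domino game $(\Sigma,\D)=(K,\D_K)$ equipped with the norm function $f=|\cdot|$, and then transport its conclusion back to packings via Proposition~\ref{prop:isomorphism_packings_and_tillings}. The first step is bookkeeping: since $K$ is finite, $\Sigma=K$ is a finite alphabet, $\sup(K)=\max(K)\in K$, and $N=\lceil\sup(K)\rceil$ is fixed, so there are only $(\#K)^N$ words of length $N$ and hence $\D_K$ is a \emph{finite} domino set. The second step is to check that $f=|\cdot|$ is a norm function: axiom (a) holds because $\#(w_1w_2^kw_3)$ and $|w_1w_2^kw_3|$ are affine in $k$, say equal to $A+kB$ and $C+kD$ with $B,D\ge 0$, and $k\mapsto (A+kB)/(C+kD)$ has derivative of constant sign $(BC-AD)/(C+kD)^2$; axiom (b) holds since $\#(w_2^k)/|w_2^k|=\#w_2/|w_2|$ is a positive constant for nonempty $w_2$; axiom (c) holds because every letter of $K$ lies in $[1,\max(K)]$, so deleting at most $\lfloor\epsilon\rfloor$ letters changes the norm by at most $\epsilon\max(K)$. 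All three are routine.

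Next I would identify the relevant supremum. By the reduction recorded at the start of Section~\ref{sec:1D} one has $\Delta_1(K)=\sup_{P\in\pazocal{P}_+(K)}\dens^+(P)$, and by the bijection of Proposition~\ref{prop:isomorphism_packings_and_tillings}, which preserves density and periodicity, this equals $\sup_{\ga\in\T(\D_K)}\dens(\ga)=\sup_{\ga\in\T(\D_K)}\dens_f(\ga)$. This supremum lies in $(0,\infty)$: it is $\le 1$ because each letter is $\ge 1$, so $|w_1\cdots w_t|\ge\#(w_1\cdots w_t)$; and it is $\ge 1/\max(K)>0$ because the constant tiling built from the single word $\be=(\max K)^{N}$ is admissible — every subword of $\be\be$ has norm a multiple of $\max K$, hence lies in $K\cup(\sup(K),\infty)$ — and has $\dens_f=\#\be/|\be|=1/\max(K)$. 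Theorem~\ref{thm:finitedomino} then produces a \emph{periodic} tiling $\ga^\ast\in\T(\D_K)$ with $\dens_f(\ga^\ast)=\Delta_1(K)$, and Proposition~\ref{prop:isomorphism_packings_and_tillings} yields a periodic $P^\ast\in\pazocal{P}_+(K)$ with $\dens^+(P^\ast)=\Delta_1(K)$.

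The last step — and the only one that is not entirely automatic — is to upgrade $P^\ast$, which is a packing of the half-line $[0,\infty)$, to a genuinely lattice-periodic packing of all of $\r$. Write the centers of $P^\ast$ as $0=c_1<c_2<\cdots$; periodicity of $\ga^\ast$ means the gap sequence $a_j=c_{j+1}-c_j$ satisfies $a_{j+p}=a_j$ for all $j$, so $c_{j+p}=c_j+T$ with $T=a_1+\cdots+a_p$. Extend the indexing to $j\in\z$ by $c_{j+p}:=c_j+T$ and set $\wt P=\{c_j+\bd:j\in\z\}$, a $T\z$-periodic packing. The key point is that the bi-infinite extension creates \emph{no new pairwise distances}: for $i<j$ in $\z$, picking $m$ with $i+mp\ge 1$ gives $c_j-c_i=c_{j+mp}-c_{i+mp}$, a distance already occurring in $P^\ast$, hence lying in $K\cup(\sup(K),\infty)$. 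Thus $\wt P$ is $K$-admissible, periodic, and $\dens(\wt P)=p/T=\dens^+(P^\ast)=\Delta_1(K)$, which by definition is the supremal density of a $K$-admissible packing of $\r$, so $\wt P$ is optimal. I expect this half-line-to-full-line transfer to be the main (if minor) obstacle: one must verify both that periodic extension does not manufacture a forbidden distance and that it preserves the density, and both follow from the observation above that the distance set is unchanged.
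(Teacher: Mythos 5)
Your proposal is correct and follows essentially the same route as the paper, which derives the corollary directly from Theorem~\ref{thm:finitedomino} with $\Sigma=K$ and $f=|\cdot|$ via the bijection of Proposition~\ref{prop:isomorphism_packings_and_tillings}; your verifications of the norm-function axioms, the finiteness of $\D_K$, and the positivity of the supremum are exactly the routine checks the paper leaves implicit. The only point you treat more carefully than the paper — the periodic extension from the half-line to all of $\r$ without creating new distances — is handled correctly and is a worthwhile detail, but it does not constitute a different approach.
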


In order to prove Theorem \ref{thm:finitedomino}, we need to introduce some terminology. Let $V:= \{w\in \Sigma^* : \exists w'\  s.t. \ (w,w')\in \D \text{ or } (w',w)\in \D\}$. We then can define a directed graph by $\G=(V,\D)$, with vertex set $V$ and directed edges $\D$. From now on, in $\G$, we only consider directed paths (i.e., finite sequences of the form $\ga=w_1...w_m$ such that $(w_j,w_{j+1}) \in \D$ for any $j$). We say that the \emph{path length} of $\ga$ is $m$. We also say that $\ga$ is a \emph{closed path} when $(w_m,w_1)\in\D$ and \emph{open} otherwise. An atom is a simple closed path (closed and with no self-intersections). 

\begin{proof}[Proof of Theorem \ref{thm:finitedomino}]
Since $\D$ is finite, then $V$ is finite and the set of atoms $\A$ is also finite. First, we claim that for any closed path $\beta=w_1...w_m$ in $\G$, it holds that

\begin{equation}
\label{eq:inequality_density_closedpath_atoms}
\frac{\#\beta}{f(\beta)} \leq \max_{\al\in\A} \frac{\#\al}{f(\al)}.
\end{equation}
The inequality is trivial if $\beta$ is an atom. Assume by induction the claim is true if the path length of $\be$ is $\leq M$. For $m=M+1$ and $\beta$ not an atom, then $\be$ must have some self-intersection (i.e., there exist $1\leq i<j\leq M+1$ such that $w_i=w_j$). Let
$v_1=w_1...w_{i-1}$, 
$v_2=w_i...w_{j-1}$ and
$v_3=w_j...w_{M+1}$. Then $v_1,v_2,v_3\in\Sigma^*$, $v_1v_3$ and $v_2$ are closed paths in $\G$ with path lengths $\leq M$ (note that we may have $v_1$ or $v_3$ the empty word, but not both at the same time). By monotonicity hypothesis and the induction hypothesis, we deduce
\begin{equation}
\frac{\#\beta}{f(\beta)}=\frac{\#(v_1v_2v_3)}{f(v_1v_2v_3)}\leq \max\left\{\frac{\#(v_1v_3)}{f(v_1v_3)},\frac{\#v_2}{f(v_2)}\right\}\leq \max_{\al\in\A} \frac{\#\al}{f(\al)}
\end{equation}
as desired. Let now $\ep>0$ be sufficiently small and $\ga=w_1w_2w_3...\in\T(\D)$ be such that $\dens_f(\ga)+\ep> \sup_{\al\in\T(\D)} \dens_f(\al)>\ep$. Let $m_n\uparrow\infty$ be a sequence such that $$\dens_f(\ga)=\lim_{n\rightarrow\infty} \frac{\#(w_1...w_{m_n})}{f(w_1...w_{m_n})}.$$ Since $\#\D$ is finite, there exists a word $w$ which appears infinitely many times in the sequence of $\ga$. We may assume such word is $w_1$ since the properties of $f$ imply that
$$
\lim_{n\rightarrow\infty} \frac{\#(w_1...w_{m_n})}{f(w_1...w_{m_n})} = \lim_{n\rightarrow\infty} \frac{\#(w_i...w_{m_n})}{f(w_i...w_{m_n})}
$$
for any $i$. For each $n$, we fix a path $\ga_{(w_{m_n},w)}$ in $\G$ from $w_{m_n}$ to $w$ (such a path exists since $w$ appears infinitely many times in $\ga$). By removing atoms, we may assume that $\ga_{(w_{m_n},w)}$ is simple, so that $\ga_{(w_{m_n},w)}$ has path length at most the cardinality of $\D$. Since $\D$ is finite, we have just finitely many simple paths $\ga_{(w_{m_n},w)}$, so $\#\ga_{(w_{m_n},w)}=O(1)$. Let $\ga_{(w_{m_n},w)}^*$ be the path $\ga_{(w_{m_n},w)}$ without the endpoints $w_{m_n}$ and $w$, so that $\ga_{(w_{m_n},w)}=w_{m_n}\ga_{(w_{m_n},w)}^*w_1$. Hence, $w_1...w_{m_n}\ga_{(w_{m_n},w)}^*$ is a closed path. We can now use the properties of $f$ to conclude
\begin{align*}
\lim_{n\to\infty} \frac{\#(w_1...w_{m_n}\ga_{(w_{m_n},w)}^*)}{f(w_1...w_{m_n}\ga_{(w_{m_n},w)}^*)} = \lim_{n\to\infty} \frac{\#(w_1...w_{m_n})}{f(w_1...w_{m_n})}=\dens_f(\ga).
\end{align*}
However, $\frac{\#(w_1...w_{m_n}\ga_{(w_{m_n},w)}^*)}{f(w_1...w_{m_n}\ga_{(w_{m_n},w)}^*)}\leq \max_{\al\in\A} \frac{\#\al}{f(\al)}$.  This concludes the proof.
\end{proof}

\begin{figure}[h]
\vspace{-.1cm}
\begin{center}
\includegraphics[scale=.5]{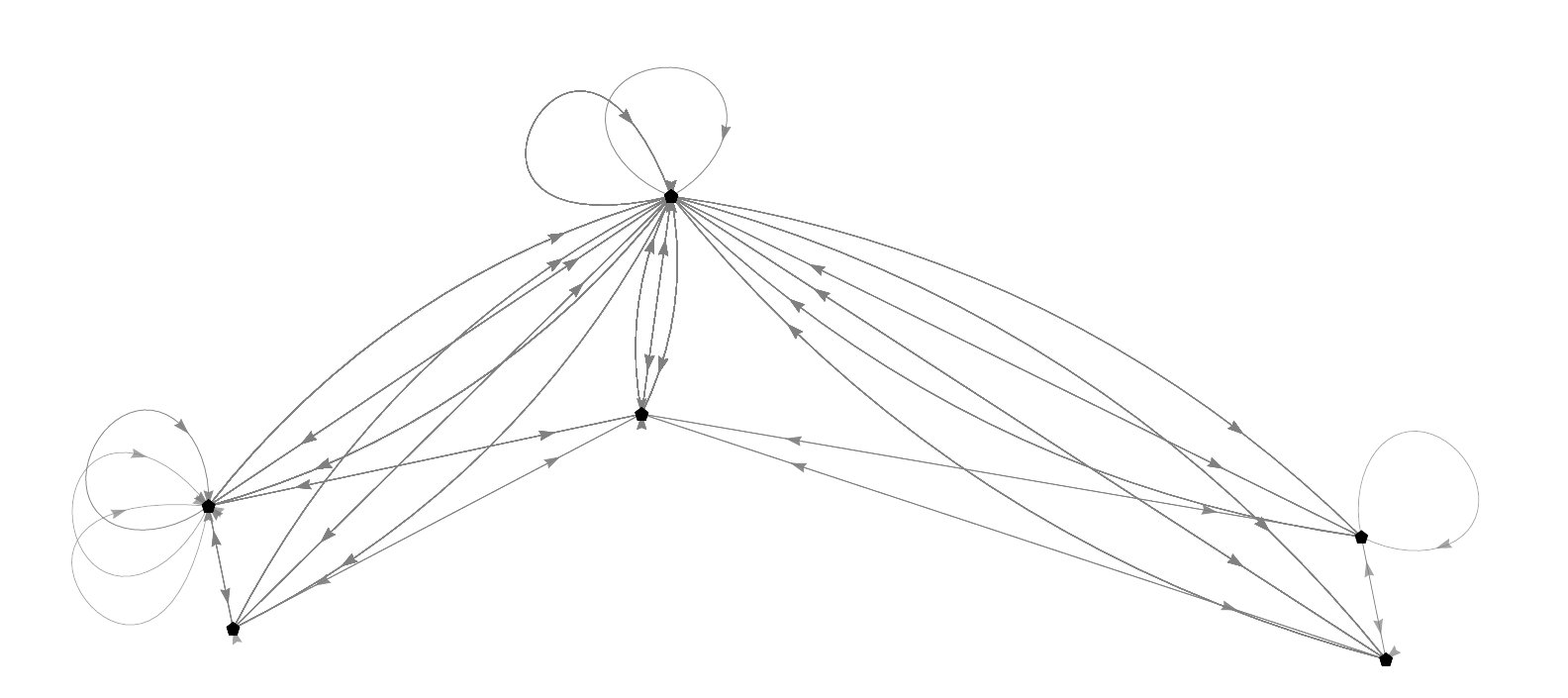}
\end{center}
\vspace{-.5cm}
\caption{A visualization of the graph $\G$ for $K=\{1,2,\be\}$ and $3<\be \leq 4$ via higher-dimensional embedding.}
\end{figure}

The proof of Theorem \ref{thm:finitedomino} does not work for an arbitrary compact $1\in K \subset [1,\infty)$ since it strongly relied on the fact that the graph $\G$ (or the domino set $\D$) was finite, so that any closed path could be decomposed into atoms, and hence, in this case, the atoms control the value $\Delta_1(K)$. Nevertheless, our proof describes an algorithm to find optimal $K$-admissible sphere packings in the case $K$ is finite.

We now analyse the case when $K$ has 3 elements.

\begin{lemma}\label{thm:Kalbe}
Let $1<\al<\be$ and $K=\{1,\al,\be\}$ with $1<\al<\be$. If $\beta\leq 2$, the periodic packing $1111...$ has maximal density. If $\beta>2$, we have the following cases:

\begin{center}
    \begin{tabular}{c|c|c}
        & conditions on $\al,\beta$ & a periodic packing of maximal density \\ \hline
        1 & $\al=2$ and $\beta\leq3$ & $1111...$ \\ \hline
        2 & $\al=2$ and $3<\beta$ & $11\beta 11\beta...$ \\ \hline
        3 & $\al\neq2$ and $\beta\leq 1+\al$ & $1\al1\al...$ \\ \hline
        4 & $\al\neq2$, $1+\al<\beta\leq 2\al$ and $2\al\leq \beta+1$ & $\al\al...$ (and $1\beta 1\beta...$ if $2\al=\beta+1$)  \\ \hline
        5 & $\al\neq2$, $1+\al<\beta\leq 2\al$ and $2\al> \beta+1$ & $1\beta1\beta...$ \\ \hline        
        6 & $\al\neq2$ and $2\al< \beta$ & $1\beta1\beta...$ \\ \hline 
    \end{tabular}
\end{center}

\end{lemma}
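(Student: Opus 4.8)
The plan is to reduce the problem, via Proposition \ref{prop:isomorphism_packings_and_tillings} and Corollary \ref{existence_of_maximal_periodic_packing_finite_case}, to analyzing finitely many candidate periodic words, since $K=\{1,\al,\be\}$ is finite. First I would record the easy general principle used throughout: in any $K$-admissible word $w=a_1a_2\ldots$, every consecutive \emph{block sum} $a_i+a_{i+1}+\cdots+a_j$ must lie in $K\cup(\be,\infty)$; in particular, since $\sup K=\be$, any two consecutive letters summing to something $\le\be$ must actually sum to an element of $K$. When $\be\le 2$ this forces every pair of adjacent letters to sum to $\ge\be$ only if the sum exceeds $\be$, but each letter is $\ge 1$ so any two consecutive $1$'s already sum to $2\ge\be$; hence $1111\ldots$ is admissible and clearly optimal (density $1$, the absolute maximum $\Delta_1(\{1\})$). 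For $\be>2$ the argument splits into the six listed cases, and in each one the strategy is the same two-part scheme: (i) exhibit the claimed periodic packing and check it is $K$-admissible by verifying all its block sums lie in $K\cup(\be,\infty)$ — this is a finite check since the word is periodic and $K$ is finite; (ii) prove the matching upper bound on $\Delta_1(K)$ by a \emph{local counting argument}: find an integer $m$ and a length $\ell$ such that any $\ell$-window of any admissible word contains at most $m$ interval-starts, giving $\Delta_1(K)\le m/\ell$.

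For the upper bounds I would argue as follows. In Case 2 ($\al=2$), note that three consecutive letters have sum $\ge 3>$ nothing forbidden only if the sum is in $K\cup(\be,\infty)$; since the only elements of $K$ are $1,2,\be$ and $3>2$, any three consecutive letters must sum to $\ge\be>3$, but if all three were $1$ the sum is $3<\be$ — contradiction unless $\be\le 3$, which is Case 1. So for $\be>3$, among any three consecutive letters we cannot have three $1$'s; a short combinatorial check (the only letters available that keep pair-sums admissible, given $\al=2$, are $1$ and $\be$, because a pair $1,2$ sums to $3\notin K$) shows the densest legal word alternates $11\be$, giving $\Delta_1(K)\le 2/(2+\be)$, attained by $11\be11\be\ldots$. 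In Case 3 ($\al\ne 2$, $\be\le 1+\al$): any two consecutive letters sum to something in $\{2,1+\al,2\al,1+\be,\al+\be,2\be\}\cap(K\cup(\be,\infty))$; the constraint $\be\le 1+\al$ rules out two consecutive $1$'s (sum $2$: admissible only if $2\in K$, i.e. $\al=2$) — wait, $2$ could exceed $\be$; since $\be>2$, $2<\be$ so $2\notin K\cup(\be,\infty)$ — hence no two consecutive $1$'s, forcing density $\le 1/\al$ since between any two interval-starts the gap is $\ge\al$ except isolated $1$'s which must be followed by a gap $\ge\al$; the word $1\al1\al\ldots$ realizes $2/(1+\al)\le 1/\al$? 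One must compare: actually here the optimal is $1\al1\al\ldots$ with density $2/(1+\al)$, and I'd check $2/(1+\al)\ge 1/\al$ iff $\al\ge 1$, true, and that no denser word exists by the pair constraint. Cases 4, 5, 6 are handled by the same template, comparing the two candidate periodic words $\al\al\ldots$ (density $1/\al$) and $1\be1\be\ldots$ (density $2/(1+\be)$): $1/\al\ge 2/(1+\be)$ iff $1+\be\ge 2\al$ iff $\be\ge 2\al-1$, which is exactly the dividing line $2\al\le\be+1$ separating Cases 4 and 5; in Case 6 ($2\al<\be$) one also checks that words mixing $1,\al,\be$ cannot beat $1\be1\be\ldots$, using that $1+\al<\be$ forces any $1$ to be adjacent to a $\be$ and any isolated $\al$ to be surrounded by letters summing it past $\be$.

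The main obstacle I anticipate is the upper-bound direction in Cases 4, 5, 6, specifically ruling out \emph{all} admissible words — not just the obvious periodic ones — that could in principle interleave $1$'s, $\al$'s and $\be$'s cleverly. The clean way to close this is to invoke the atom-decomposition of Theorem \ref{thm:finitedomino}: build the finite domino graph $\G$ for $\Sigma=K$ (with window size $N=\lceil\be\rceil\le 4$), and observe that $\Delta_1(K)=\max_{\al\in\mathcal A}\#\al/|\al|$ over the finitely many atoms (simple cycles). One then enumerates the atoms — there are only a handful given $|K|=3$ and $N\le 4$ — and checks that the maximum ratio is achieved by the cycle corresponding to the claimed periodic packing. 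This turns each case into a finite, mechanical verification: list the legal length-$N$ words (vertices), list the legal adjacencies (edges), find all simple cycles, and compare their densities. I would present Cases 1–2 in detail as representative (where $\al=2$ creates the extra arithmetic coincidence $1+1=2\in K$), then state that Cases 3–6 follow by the identical atom-enumeration, displaying the winning atom and the key inequality $2\al\lessgtr\be+1$ that decides between $\al\al\ldots$ and $1\be1\be\ldots$ in each regime, with the boundary case $2\al=\be+1$ giving the two coexisting optima noted in row 4 of the table.
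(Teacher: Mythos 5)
Your proposal is correct and takes essentially the same route as the paper: reduce to periodic words via the domino/atom machinery, determine which adjacent pairs are forbidden in each parameter regime, enumerate the simple closed paths (the paper anchors them at occurrences of $\be$, plus the $\be$-free words $222\ldots$, $1\al1\al\ldots$, $\al\al\ldots$), and decide between $\al\al\ldots$ and $1\be1\be\ldots$ by the inequality $2\al\lessgtr\be+1$. Only two small slips to fix in a write-up: the density of $11\be11\be\ldots$ is $3/(2+\be)$, not $2/(2+\be)$, and in Case 2 the letter $2$ is not excluded outright (e.g.\ $222\ldots$ is admissible for $\be\le4$ with density $1/2\le 3/(2+\be)$), so it must appear in the atom comparison rather than being ruled out at the alphabet level.
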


\begin{proof}[Proof of Lemma \ref{thm:Kalbe}.]
Let $P\in\P_+(K)$ be a $K$-admissible periodic packing and let $\ga$ be its sequence of distances. We have two cases: the one in which $\beta$ does not appear in $\ga$ and the second one in which $\beta$ appears infinitely many times in $\ga$. In the second case, using the graph language, we break $\ga$ into simple closed paths with endpoints $\beta$, and we try maximize the density of such paths.


\noindent \textbf{Case 1.} It is trivial.


\noindent \textbf{Case 2.} If $\beta\leq 4$, in the first case ($\beta$ does not appear in $\ga$), we must have $\ga=222...$ whose density is 1/2. In the second case, the densest path is $\beta 11$, with density $3/(2+\beta)$ which gives the packing $\beta 11\beta 11...$. Since $3/(2+\beta)\geq1/2$, the result follows. Now, if $\beta>4$, for the first case, there is no periodic packing, and for the second one, the densest path is again $\beta11$.


\noindent \textbf{Case 3.} For the first case, $1\al 1\al...$ is the densest possibility. For the second, the paths $\beta (1\al 1)^k$ for $k\rightarrow\infty$ have increasing density, and they converge to $1\al 1\al...$, which is the densest periodic packing in this case.


\noindent \textbf{Case 4.} In this case, the pairs $(1,1)$ and $(1,\al)$ cannot be consecutive distances. For the first case, we case the sequence $\al\al...$ with density $1/\al$. For the second case, we have the paths $\beta\al^k$ for $k\geq1$ and $\beta1$. The densest packing is $\al\al...$. If $2\al=\beta+1$, then $1\beta 1\beta...$ has also maximal density.


\noindent \textbf{Case 5.} By the same arguments as in Case 4, we deduce that the densest packing is $1\beta 1\beta...$.


\noindent \textbf{Case 6.} We have to consider just the second case. Since the pairs $(1,1)$, $(1,\al)$ and $(\al,\al)$ cannot be consecutive distances, the densest path is $\beta 1$, which provides the packing $\beta 1\beta 1...$.
\end{proof}

\section{Proof of Theorem \ref{thm:1D}}
Let $K'=\{\al_1,...,\al_N\}$ the set of limit points of $K$. We can then write $K$ as follows: 
\begin{equation}
    K=\ft{K}\cup\displaystyle\bigcup_{j=1}^{N} \left\{\al_j+\lambda_k^j : k\geq1\right\},
\end{equation}
where $\ft{K}:=\left\{\al_1,...,\al_N,\beta_1,...,\beta_M\right\}$, with $1\in\ft{K}$, and such that
\begin{itemize}
    \item[(i)] $\al_i+\al_j\not\in\left\{\al_r : 1\leq r\leq N\right\};$ 
    \item[(ii)] $0<\lambda_k^j<\frac{\delta}{2}$ for any $j,k$, where $\delta:=\displaystyle\min_{1\leq i<j\leq N} \left|\al_i-\al_j\right|$;
    
    \item[(iii)] For any fixed $j$, we have $\lambda_k^j\downarrow0$ when $k\rightarrow\infty$;
    \item[(iv)] $\beta_M=\max{K}$.
\end{itemize}
We will show that under such conditions, for every packing $P\in\pazocal{P}_+(K)$, one can construct another one $\wt P$ with $\dens^+(\wt P)\geq \dens^+(P)$ and such that the set of distances of $\wt P$ lies in a finite set $\wt K \subset K$. Thus, by Lemma \ref{lem:enumerableK} and Corollary \ref{existence_of_maximal_periodic_packing_finite_case}, there exists a $K$-admissible periodic sphere packing of maximal density.
 
For $\gamma\in K$ and $1\leq i,r\leq N$, we set
\begin{align*}
    J\left(\gamma,\al_i,r\right)  :=\left\{s\geq 1 :  \gamma+\al_i+\lambda_s^r\in K\cup\left(\beta_M,\infty\right)\right\} \ \text{ and } \
    M\left(\gamma,\al_i,r\right)  :=\displaystyle\sup{J\left(\gamma,\al_i,r\right)}.
\end{align*}
Observe that, for fixed $i,j$, we may have $\al_j+\lambda_k^j+\al_i=\al_w$ for some $w$. In this case, the index $k$ with such property is unique because of (ii). Therefore, we may write $k(j,i)$ to denote precisely this index $k$, whenever it exists. Observe also that if $M\left(\gamma,\al_i,r\right)=\infty$, then $\ga+\al_i$ is an accumulation point and either $\gamma+\al_i=\al_w$ for some $w$ or $\gamma+\al_i\geq\beta_M$. In particular, if $k\neq k(i,j)$, then $M\left(\al_j+\lambda_k^j,\al_i,r\right)=\infty$ if and only if $\al_j+\lambda_k^j+\al_i\geq\beta_M$. Our first step is to ask how large $M(\ga,\al_i,r)$ can be when it is finite.

\noindent \textbf{Step 1.} We claim that there is $C>0$ such that for all $1\leq i,j,r\leq N$ and $k\geq 1$, with $k\neq k(i,j)$, we have $M\left(\al_j+\lambda_k^j,\al_i,r\right)\leq C$ whenever it is finite. Indeed, assume by contradiction this does not happen. Then, for each $C\in \z_+$ there is $s=s(C)\geq C$, $k\geq 1$ ($k\neq k(i,j)$) and $1\leq i,j,r\leq N$ such that $\al_j+\la^j_k+\al_i+\la^r_s \in K\cup (\be_M,\infty)$ but $M\left(\al_j+\lambda_k^j,\al_i,r\right)<\infty$. Thus, as $s\to\infty$, some triple $(i,j,r)$ is repeated infinitely often, and we choose such triple. If $k=k(C)=O(1)$, then for some $k_0\neq k(i,j)$, we have that $\al_j+\la^j_{k_0}+\al_i \in K\cup (\be_M,\infty)$ is an accumulation point and $M\left(\al_j+\lambda_{k_0}^j,\al_i,r\right)<\infty$. Since $k_0\neq k(i,j)$, we have that $\al_j+\la^j_{k_0}+\al_i \geq \be_M$, but this is absurd since we would have that $M\left(\al_j+\lambda_{k_0}^j,\al_i,r\right)=\infty$, which is not the case. Thus, $k=k(C)$ is unbounded, and so $\al_j+\al_i \in K\cup (\be_M,\infty)$ is an accumulation point. Condition (i) implies that $\al_j+\al_i \geq \be_M$, and again we reach a contradiction since this implies that $M\left(\al_j+\lambda_{k}^j,\al_i,r\right)=\infty$ for all $k$. 

\noindent \textbf{Step 2.} Let $C>0$ be the constant from the previous step. Enlarge $C$ if necessary so that we also have
$$
C\geq M(\ga,\al_i,r)
$$
for all $\ga\in \ft K$ and $1\leq i,r\leq N$ such that $M(\ga,\al_i,r)<\infty$. 
Define the set $\wt{K}$ by
\begin{equation}
    \wt{K}:=\ft{K}\cup\displaystyle\bigcup_{j=1}^{N} \left\{\al_j+\lambda_k^j; \text{ } 1\leq k\leq C\right\} \subset K.
\end{equation}
Let $P=d_1d_2d_3...\in\pazocal{P}_+(K)$ be a packing where $d_i\in K$. By an inductive argument, we will replace $d_i$ by $\wt{d_i}\leq d_i$, with $\wt{d_i}\in\wt{K}$, in such a way that $\wt P=\wt d_1 \wt d_2 ... \in \P_+(\wt K) \subset \P_+(K)$. This clearly implies that $\dens^+(\wt P)\geq \dens^+(P)$.

We start with the base case. If $d_1\in\wt{K}$, there is nothing to prove. Assume we have $d_1\not\in\wt{K}$; hence, $d_1\not\in\ft{K}$. Therefore, we have $d_1=\al_i+\lambda_s^i$ for some $1\leq i\leq N$ and $s>C$. We claim that $M(d_2,\al_i,i)=\infty$. Indeed, if it were finite, then $M(d_2,\al_i,i)\leq C$, and hence, $s\leq C$, a contradiction. Therefore, $M(d_2,\al_i,i)=\infty$; hence, $d_2+\al_i+\lambda_t^i\in K\cup\left(\beta_M,\infty\right)$ for infinitely many indices $t$. Therefore, we must have $d_2+\al_i\in K\cup\left(\beta_M,\infty\right)$. We now claim that we can replace $d_1=\al_i+\lambda_s^i$ by $\wt{d_1}:=\al_i$. In other words, we must verify that $\wt{d_1}$ is compatible with $d_2,...,d_{1+\left\lceil\beta_M\right\rceil}$. This is the reason why we considered the more general sets $J(\gamma,\al_i,r)$ with $r\neq i$. Since $d_2+\al_i\in K\cup\left(\beta_M,\infty\right)$, it follows that $\wt{d_1}$ is compatible with $d_2$.  If $\al_i+d_2\geq\beta_M$, then $\al_i+d_2+...+d_w\geq\beta_M$ for any $w\geq3$; hence, $\wt{d_1}$ satisfies all the required compatibility conditions. Assume we have $\al_i+d_2<\beta_M$. Since $\al_i+d_2+\lambda_t^i\in K\cup\left(\beta_M,\infty\right)$ for infinitely many $t$, it must be the case in which $\al_i+d_2=\al_r$ for some $r$. We claim that $M(d_3,\al_r,i)=\infty$. Indeed, if it were $<\infty$, then $M(d_3,\al_r,i)\leq C$, and, hence $s\leq C$, a contradiction. Therefore, we have $\al_r+d_3+\lambda_t^i\in K\cup\left(\beta_M,\infty\right)$ for infinitely many $t$, and hence, $\al_i+d_2+d_3=\al_r+d_3\in K\cup\left(\beta_M,\infty\right)$ which proves $\wt{d_1}$ is also compatible with $d_3$. If $\al_r+d_3\geq\beta_M$, then we are done. Otherwise, we apply the same procedure to show that $\wt{d_1}$ is compatible with $d_4$, and so on. By repeating this process at most $\left\lceil\beta_M\right\rceil$ times, we conclude that, indeed, it is possible to replace $d_1$ by $\wt{d_1}=\al_i$.

Assume that we have replaced $d_1,...,d_n$ by $\wt{d_1},...,\wt{d_n}\in\wt{K}$. If $d_{n+1}\in\wt{K}$, then we are done. If not, we repeat the same procedure as for $d_1$, but now for both the right- and the left-hand side of $d_{n+1}$. This completes the induction argument. Observe that $d_n\neq\wt{d_n}$ precisely when $d_n=\al_i+\lambda_k^i$, for $k>C$, and in this case, we took $\wt{d_n}=\al_i<d_n$. This process does not reduce the density of the packing.
\hfill \qed

\begin{obs}
The main idea of this proof was to take a packing of maximal density and reduce to the finite case by replacing the distances close enough to an accumulation point by this point. Such technique relies a lot on the structure of the set $K$ (existence of left or right limit points and the arithmetic structure of $K'$). As the complexity of $K$ grows, such technique becomes very hard to implement. Nevertheless, the above proof gives an algorithm to find the best packing in such a case.
\end{obs}

\section{Constructions via modular forms}

In this section, we present some constructions via modular forms that generalize the ones in \cite{CG19,CKMRV17,Vi17} to all dimensions divisible by $4$.

Throughout the rest of the paper, we will always use $z$ for a variable in the upper-half plane $\HH=\{x+it: x\in \r, t>0\}$.  We will use the convention $q=e^{2\pi i z}$ and $r=e^{\pi i z}$ for $z\in \HH$. We will be handling holomorphic modular forms $f:\HH\to\cp$ over the principal congruence subgroups
$$
\Gamma(N) = \{\ga\in {\rm SL}_2(\z) : \ga \equiv  \text{Id} \ (\text{mod } N) \}
$$
(note $\Gamma(1)={\rm SL}_2(\z)$). A holomorphic modular form of weight $k$ for a subgroup $\Gamma<\Gamma(1)$ is a holomorphic function $f: \HH \to \cp$ invariant by the slash operation
$$
f|_k\ga(z) :=  (cz+d)^{-k}f\bigg(\frac{az+b}{cz+d}\bigg)=f(z), \ \ \text{ for all } z\in \HH  \text{ and } \ga=\left(\begin{matrix} a & b\\ c & d\end{matrix}\right)\in \Gamma,
$$
and such that 
$$
f|_k\ga(z) = O(1) \ \ \text{ as } \im z\to\infty.
$$
We denote by $M_k(\Gamma)$  for the space of holomorphic modular forms $f:\HH\to\cp$ of weight $k$ for $\Gamma$. We simply write $M_k$ for $M_k(\text{SL}_2(\z))$. We let
$$
T = \left( \begin{matrix} 1 & 1\\ 0 & 1\end{matrix}\right) \ \ \text{and} \ \ S = \left( \begin{matrix} 0 & -1\\ 1 & 0\end{matrix}\right).
$$
If $T^2 \in \Gamma$, then any $f\in M_k(\Gamma)$ has a Fourier $r$-series of the form
$$
f(z) = \sum_{n\geq 0} a_n r^n.
$$
The Eisenstein series
\begin{align*}
E_4(z)  = 1+240 \sum_{n\geq 1} \si_{3}(n)q^{n} \ \ \text{ and } \ \  E_6(z)  =1- 504 \sum_{n\geq 1} \si_{5}(n)q^{n},
\end{align*}
are classical examples of modular forms in $M_4$ and $M_6$, respectively. It is easy to see that $M_k$ is trivial if $k$ is odd or if $k<4$ is nonzero, while $M_0$ contains only constants.
Indeed, a fundamental result is that
$$
\bigoplus_{j\in \z} M_{j} = \cp[E_4,E_6].
$$
We will also need Ramanujan’s cusp form of weight $12$
$$
\Delta(z)= \frac{E_4^3(z)-E_6^2(z)}{1728} = q \prod_{n\geq 1}(1-q^n)^{24} \in M_{12},
$$
which clearly never vanishes for $z\in \HH$. We will also need the Jacobi theta functions defined by
\begin{align*}
\Theta_{00}(z) = \sum_{n \in \z} r^{n^2}, \quad \Theta_{10}(z)  = \sum_{n \in \z} r^{ (n+1/2)^2}\quad \text{and} \quad \Theta_{01}(z) = \sum_{n \in \z} (-1)^n r^{n^2}.
\end{align*}
We define their fourth powers by
$$
U=\Theta_{00}^4, \quad V=\Theta_{10}^4\quad \text{and} \quad W=\Theta_{01}^4.
$$ 
These are modular forms of weight $2$ that satisfy the Jacobi identity 
$$
U=V+W
$$ and the transformation laws
\begin{align*}
& U|_2T = W, \ \ V|_2T = -V, \ \ W|_2T = U,\\
& U|_2S = -U, \ \ V|_2S = -W, \ \  W|_2S = -V.
\end{align*}
The functions $U,V,W$ are examples of modular forms of weight $2$ for $\Gamma(2)$. Another fundamental result is that
$$
\bigoplus_{j\in \z} M_{j}(\Gamma(2)) = \cp[U,V,W].
$$
Indeed, $M_k(\Gamma(2))$, for even $k\geq 2$, coincides with the space of homogeneous polynomials of degree $k/2$ in any two of the $U,V,W$. Finally, we let 
$$
E_2(z) = 1-24 \sum_{n\geq 1} \si_{1}(n)q^{n}
$$
be the quasimodular Eisenstein series of weight $2$. It satisfies the transformation rules 
$$
E_2(z+1)=E_2(z) \text{ and } E_2(-1/z)z^{-2} = E_2(z) + \frac{6}{\pi i z}.
$$
We define the space of holomorphic quasimodular forms of weight $k$ and depth $p$ over a subgroup $\Gamma$ by
\begin{align}\label{eq:quasimodularcharac}
M_k^{\leq p}(\Gamma) = \bigoplus_{j=0}^p E_2^jM_{k-2j}(\Gamma).
\end{align}
We again omit $\Gamma$ when $\Gamma={\rm SL}_2(\z)$. For more details about all these modular forms we recommend \cite{Z}.

The following proposition is key to build admissible functions for the linear programming bounds we have developed. 
\begin{proposition}\label{prop:Fconstruct}
Let $\Lambda\subset \r$ be finite, with $0\in \Lambda$ and such that $-1/\la\in \Lambda$ whenever $\la\in \Lambda$ and $\la\neq 0$. Let 
$$
p(s) = \sum_{\la\in \Lambda} a_\la e^{\pi i s \la}
$$
be a trigonometric polynomial. Let $\ep \in \{-1,1\}$, $d\geq 1$ be an integer and $f:\HH\to \cp$ be analytic. Suppose that
\begin{enumerate}
\item[(a)] We have
$$
\int_0^1|f(it)|t^{-d/2} \d t <\infty;
$$
\item[(b)] There is $\delta>0$ such that for any $c>0$, we have
$$
f(z)=O_c(e^{\pi \delta \Im z})
$$
if $\Im z > c > |\Re z| $;
\item[(c)] For all $\la\in \Lambda\setminus \{0\}$ and $z\in \HH$, we have
$$
a_\la f(z-\la) = -\ep a_{-1/\la} (z/i)^{d/2-2}f(-1/z+1/\la);
$$
\item[(d)] For all $z\in \HH$, we have
$$
\sum_{\la \in \Lambda} a_\la f(z-\la) = \ep a_0 (z/i)^{d/2-2} f(-1/z).
$$
\end{enumerate}
Then the function
$$
h(s) = p(s)\int_{0}^{\infty i} f(z)e^{\pi i z s} \d z,
$$
defines an analytic function for $\Re s >0$ that extends to an continuous function in $\re s\geq 0$, and satisfies the identity
\begin{align}
\begin{split}\label{eq:h-id}
& h(s) \\ & = \sum_{\substack{\la\in \Lambda \\ \la >0}} \int_\la^i a_\la f(z-\la)(e^{\pi i z s } +\ep (i/z)^{d/2}e^{\pi i (-1/z) s})\d z + a_0\int_0^i f(z)(e^{\pi i z s } + \ep(i/z)^{d/2}e^{\pi i (-1/z) s})\d z.
\end{split}
\end{align} 
if $\re s \geq 0$. In particular, the function
$
F(x)=h(|x|^2)
$
belongs to $L^1(\r^d) \cap C^\infty(\r^d)$ and $\ft F(x) = \ep F(x)$.
\end{proposition}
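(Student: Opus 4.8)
The plan is to prove, in this order, that $h$ is analytic on $\{\re s>\delta\}$, that the right-hand side of \eqref{eq:h-id} furnishes its entire continuation, and finally the two assertions about $F$; the analytic continuation and the $\epsilon$-eigenfunction property both fall out of the single contour-shift identity \eqref{eq:h-id}, which is the analogue here of the ``$\pm1$-eigenfunction'' construction of \cite{CKMRV17,Vi17}. For the first point, write the integral along the imaginary axis as $i\int_0^\infty f(it)e^{-\pi ts}\,\d t$. Condition (a) (recall $d\geq1$, so $t^{-d/2}\geq1$ on $(0,1]$) handles convergence near $t=0$, and condition (b), applied with $\re z=0$ and $c$ arbitrarily small, gives $f(it)=O(e^{\pi\delta t})$; hence the integral converges absolutely and locally uniformly on $\{\re s>\delta\}$, and multiplying by the entire function $p$ keeps $h$ analytic there.

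For the contour shift I would first use $p(s)=\sum_{\la}a_\la e^{\pi i\la s}$ to write $h(s)=\sum_{\la\in\Lambda}a_\la\int_0^{i\infty}f(z)e^{\pi i(z+\la)s}\,\d z$ and shift each contour by $\la$, obtaining $h(s)=\sum_{\la\in\Lambda}a_\la\int_{\la}^{\la+i\infty}f(z-\la)e^{\pi i z s}\,\d z$ over the vertical ray from $\la$ to $\la+i\infty$. Next I would deform each ray into $A_\la\cup I$, where $A_\la$ is the hyperbolic geodesic arc from $\la$ to $i$ and $I=\{it:t\geq1\}$ (so $A_0=[0,i]$); this is legitimate by Cauchy's theorem on $\{\re s>\delta\}$ since the enclosed region lies in $\HH$, the integrand is holomorphic there, (a) controls the corner at $\la$, and the piece at infinity vanishes because $|e^{\pi i z s}|=e^{-\pi\im(z)\re s}$ and (b) holds. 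This gives
\[
h(s)=\sum_{\la\in\Lambda}a_\la\int_{A_\la}f(z-\la)e^{\pi i z s}\,\d z+\int_I G(z)e^{\pi i z s}\,\d z,\qquad G(z):=\sum_{\la\in\Lambda}a_\la f(z-\la).
\]
Now apply the change of variables $z\mapsto-1/z$, which fixes $i$, sends $\la\mapsto-1/\la\in\Lambda$, and hence maps $A_\la$ onto $A_{-1/\la}$ — this is exactly where the hypothesis $-1/\la\in\Lambda$ is used. Using (d) on the $I$-integral and (c) on the arc integrals with $\la\neq0$, and tracking the Jacobian factor together with the elementary identity $(i/z)^{d/2-2}z^{-2}=-(i/z)^{d/2}$ and $\epsilon^2=1$, one checks that $\int_I G(z)e^{\pi i z s}\,\d z=\epsilon a_0\int_{[0,i]}(i/z)^{d/2}f(z)e^{\pi i(-1/z)s}\,\d z$ and that the $\la$ and $-1/\la$ arc contributions are equal; pairing them converts $\sum_{\la\neq0}a_\la\int_{A_\la}f(z-\la)e^{\pi i z s}\,\d z$ into $\sum_{\la>0}a_\la\int_{A_\la}f(z-\la)\big(e^{\pi i z s}+\epsilon(i/z)^{d/2}e^{\pi i(-1/z)s}\big)\,\d z$. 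Collecting this with the $\la=0$ arc term and the transformed $I$-term produces precisely \eqref{eq:h-id}.

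Granting \eqref{eq:h-id} on $\{\re s>\delta\}$, the entire continuation follows: every integral on its right-hand side is over a \emph{bounded} contour, the integrand is entire in $s$ for each fixed $z$, and the sole endpoint on $\r$ is $\la$ (resp.\ $0$), where $f(z-\la)$ is integrable by (a) while the factors $(i/z)^{d/2}$ and $e^{\pi i(-1/z)s}$ stay bounded locally uniformly in $s$ — for $\la>0$ because $-1/\la$ is a finite real point, and for $\la=0$ because the decay of $f$ at the cusp makes $(i/z)^{d/2}e^{\pi i(-1/z)s}$ integrable near $z=0$. Hence the right-hand side of \eqref{eq:h-id} is entire and agrees with $h$ on $\{\re s>\delta\}$.

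It remains to treat $F=h(|\cdot|^2)$. Smoothness is clear since $|x|^2$ is a polynomial and $h$ is entire. For $F\in L^1$, on $\{|x|^2\leq\delta+1\}$ the function $F$ is continuous hence bounded, and on $\{|x|^2>\delta+1\}$ one bounds $|F(x)|\leq\|p\|_\infty\int_0^\infty|f(it)|e^{-\pi t|x|^2}\,\d t$ and splits at $t=1$: for $t\in(0,1)$, Tonelli gives $\int_{|x|^2>\delta+1}\int_0^1|f(it)|e^{-\pi t|x|^2}\,\d t\,\d x\leq\int_0^1|f(it)|t^{-d/2}\,\d t<\infty$ by (a), while for $t\geq1$ condition (b) gives $\int_1^\infty|f(it)|e^{-\pi t|x|^2}\,\d t=O(e^{-\pi(|x|^2-\delta)})$, which is integrable in $x$. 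Finally, for $\ft F=\epsilon F$: in \eqref{eq:h-id} each integrand, as a function of $x$, is $f(z-\la)$ times $e^{\pi i z|x|^2}+\epsilon$ times the Fourier transform of $x\mapsto e^{\pi i z|x|^2}$, the latter being $(i/z)^{d/2}e^{\pi i(-1/z)|x|^2}$ for $\im z>0$; since the Fourier transform is an involution on radial Schwartz functions, interchanging it with the bounded-contour integrals (legitimate by dominated convergence, after first pushing the contours slightly into $\HH$) swaps the two summands, and $\epsilon^2=1$ yields $\ft F=\epsilon F$. The main obstacle I anticipate is the bookkeeping in the contour shift — keeping the branch of $(z/i)^{d/2-2}$, the Jacobians, the signs, and the pairing $\la\leftrightarrow-1/\la$ mutually consistent — together with the more technical points of convergence near the cusp $0$ in the continuation and of the Fourier-transform interchange where the Gaussians degenerate at the endpoints of the contours.
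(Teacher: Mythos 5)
Your proposal is correct and follows essentially the same route as the paper's proof: shift each vertical ray $\la\to\la+i\infty$ to a path through $i$, apply condition (d) on the common ray $[i,i\infty)$ and condition (c) on the arcs after the substitution $z\mapsto-1/z$, pair $\la$ with $-1/\la$ to assemble the $\ep$-eigenfunction kernel $e^{\pi izs}+\ep(i/z)^{d/2}e^{\pi i(-1/z)s}$, and read off the entire continuation and the identity $\ft F=\ep F$ from the resulting bounded-contour representation. Your treatment of the $L^1$ bound and of the endpoint convergence is somewhat more detailed than the paper's (which leaves these as consequences of \eqref{eq:h-id} and condition (a)), but the substance is the same.
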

\begin{proof}
Conditions (a) and (b) guarantee that $h(s)$ is analytic in the region $\Re s>\delta$. The analyticity of $h(s)$ for $0<\re s <2\delta$ and continuity of $h(s)$ for $\re s=0$ follows straightforwardly by identity \eqref{eq:h-id} and condition (b). The fact that $F(x)$ defines a radial $L^1$-function and $\ft F(x) = \ep F(x)$ follows by identity \eqref{eq:h-id}, condition (a) and the fact that $x\in \r^d \to (e^{\pi i z |x|^2 } +\ep (i/z)^{d/2}e^{\pi i (-1/z) |x|^2})$ is a eigenfunction of the Fourier transform in $\r^d$ with eigenvalue $\ep$. It remains to prove identity \eqref{eq:h-id}. By analytic continuation (and condition (b)), it is enough to prove it for $\Re s> \delta$. We then have
\begin{align*}
h(s) & = \sum_{\la \in \Lambda} a_\la \int_{\la}^{\la+\infty i} f(z-\la)e^{\pi i z s}\d z \\
& = \sum_{\substack{\la\in \Lambda \\ \la \neq 0}} a_\la \int_{\la}^{i} f(z-\la)e^{\pi i z s}\d z + \int_{i}^{\infty i} \big(\sum_{\la \in \Lambda} a_\la  f(z-\la)\big) e^{\pi i z s} \d z + a_0\int_0^i f(z) e^{\pi i z s} \d z
\end{align*}
where in the second line above, we applied condition (b) to split the integral from $\la$ to $i$ and $i$ to $\infty i$. We obtain
\begin{align*}
& h(s) \\ &  = \sum_{\substack{\la\in \Lambda \\ \la > 0}} \bigg(a_\la \int_{\la}^{i} f(z-\la)e^{\pi i z s}\d z + a_{-1/\la} \int_{-1/\la}^{i} f(z+1/\la)e^{\pi i z s}\d z\bigg) \\ & \quad + a_0\int_{i}^{\infty i} \ep (z/i)^{d/2-2} f(-1/z) e^{\pi i z s} \d z + a_0\int_0^i f(z) e^{\pi i z s} \d z \\ & = \sum_{\substack{\la\in \Lambda \\ \la > 0}} \int_{\la}^{i}  \big(a_\la f(z-\la)e^{\pi i z s}\d z + a_{-1/\la}  f(-1/z+1/\la)e^{\pi i (-1/z) s}z^{-2}\big)\d z \\ & \quad + a_0\int_{0}^{i}\big(\ep (i/z)^{d/2}  e^{\pi i (-1/z) s} + e^{\pi i z s}\big)f(z) \d z
\\ & = \sum_{\substack{\la\in \Lambda \\ \la > 0}} \int_{\la}^{i}  a_\la f(z-\la)\big(e^{\pi i z s} + \ep(i/z)^{d/2}e^{\pi i (-1/z) s}\big)\d z  + a_0\int_{0}^{i}\big(\ep (i/z)^{d/2}  e^{\pi i (-1/z) s} + e^{\pi i z s}\big)f(z) \d z,
\end{align*}
where in the first equality, we have applied condition (d) and that $\Lambda$ is closed by $\la\to -1/\la$, in the second equality, we used the change of variables $z\to-1/z$, and in the last, we have applied condition (c).
\end{proof}

\begin{obs}
It is straightforward to show that if we strengthen condition (a) to
$$
\int_0^1|f(it)|t^{-k} \d t <\infty;
$$
for all $k>0$, then $\partial_{x}^\al F \in L^1(\r^d)$ for all multi-indexes $\al\in \z_+^d$, and since $\ft F=\ep F$, we conclude that $F$ is of Schwartz class.
\end{obs}

This integral transform was used by Viazovska et al. in \cite{CKMRV17,Vi17} for $p(s)=\sin^2(\pi s/2)$ to build the magic functions for the packing problem, and also by Radchenko and Viazovska in \cite{RV19}, for $p(s)=\sin(\pi s)$, to construct Fourier interpolation formulae with $\sqrt{n}$-nodes. For our purposes, we wish to investigate the case when $p(s)=\sin^2(\pi s/2)$. The following lemmas generalize the constructions in \cite{CKMRV17,Vi17} to all dimensions multiple of $4$.

\begin{obs}
To make computations simpler we will, from now on, adopt the nonstandard notation that
$$
f_T(z) =f|_kT(z)=f(z+1) \text{ and } f_S(z)=f|_kS(z)=f(-1/z)z^{-k},
$$
whenever it can be inferred from the context that $f$ has weight $k$; that is, $f\in M_k^{\leq p}(\Gamma)$, for some $\Gamma$, $k$ and $p$.
\end{obs}

\begin{lemma}[$\Gamma(2)$-construction]\label{lem:Ga2construction}
Let $d\geq 4$ be an integer divisible by $4$. Let $l\geq d/12$ be an even integer and set 
$$
k=2-d/2+6l  \text{ and }   \ep=(-1)^{d/4+1}.
$$
Consider the following linear operator \footnote{We recall the bracket notation for the coefficient of a power series: if $f(x)=\sum_{n\geq0} a_n x^n$, then $[x^n]f=a_n$ is the coefficient of the term $x^n$.}
\begin{align*}
L: \p \in M_k(\Gamma(2)) & \mapsto (\p_T+\p_S - \p,\,  [r^0] \p_S,\, [r]\p_S, ..., \, [r^l]\p_S).
\end{align*}
Then any function in the vector space 
$$
\F_{d,l}:=\frac{1}{\Delta^{l/2}}\ker(L)
$$
satisfies all conditions in Proposition \ref{prop:Fconstruct} for $\ep$, $d$ and $p(s)=\sin^2(\tfrac{\pi}{2} s)$. In particular, for any $f\in \F_{d,l}$, we have an associated radial Schwartz function $F_f:\r^d\to\cp$ such that $\ft {F_f}=\ep F_f$ and
$$
F_f(x) = \sin^2(\tfrac{\pi}{2} |x|^2)\int_0^{\infty i} f(z)e^{\pi i z |x|^2}\frac{\d z}{i}
$$
if $|x|^2>l$.
\end{lemma}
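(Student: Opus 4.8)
The plan is to check, for a function $f=\p/\Delta^{l/2}$ with $\p\in\ker(L)$, the four hypotheses (a)--(d) of Proposition \ref{prop:Fconstruct} with parameters $\ep=(-1)^{d/4+1}$, dimension $d$, and
$$
p(s)=\sin^2\!\big(\tfrac{\pi}{2}s\big)=\tfrac12-\tfrac14 e^{\pi i s}-\tfrac14 e^{-\pi i s},
$$
so that $\Lambda=\{-1,0,1\}$, $a_0=\tfrac12$, $a_{\pm1}=-\tfrac14$; one has $0\in\Lambda$, $-1/\la\in\Lambda$ for $\la\in\Lambda\setminus\{0\}$, and $a_{-1/\la}=a_\la$, as required there. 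Write $\kappa:=2-d/2$ (an even integer, so $d/2-2=-\kappa$) and $k=\kappa+6l$. Since $l$ is even, $\Delta^{l/2}\in M_{6l}(\mathrm{SL}_2(\z))$ is an honest holomorphic modular form, nowhere zero on $\HH$, with $\Delta^{l/2}|_{6l}\gamma=\Delta^{l/2}$ for all $\gamma\in\mathrm{SL}_2(\z)$; hence $f$ is holomorphic on $\HH$, of weight $\kappa$ for $\Gamma(2)$, and $f|_\kappa\gamma=(\p|_k\gamma)/\Delta^{l/2}$ for every $\gamma$. In particular $f|_\kappa T=\p_T/\Delta^{l/2}$ and $f|_\kappa S=\p_S/\Delta^{l/2}$, so the vanishing of the first coordinate of $L(\p)$ is exactly the relation $f|_\kappa T+f|_\kappa S=f$; and, since $\Gamma(2)$ is the mod-$2$ congruence group and is normal in $\mathrm{SL}_2(\z)$, we have $-I,T^{\pm2},S^2\in\Gamma(2)$, so $f|_\kappa(-I)=f$, $f|_\kappa T^{\pm2}=f$, $f|_\kappa S^2=f$, and $\p_S\in M_k(\Gamma(2))$ has an $r$-expansion $\sum_{n\ge0}c_n r^n$. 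Finally the vanishing of the remaining coordinates of $L(\p)$ says $c_0=\dots=c_l=0$, so $\p_S=O(r^{l+1})$; since $\Delta^{l/2}=r^l(1+O(r^2))$, this yields $f|_\kappa S=O(r)=O(e^{-\pi\Im z})$ as $\Im z\to\infty$.

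To obtain (a) and (b): using $f(-1/z)=z^{\kappa}(f|_\kappa S)(z)$ at $z=i/t$ (so $-1/z=it$) gives $f(it)=(i/t)^{\kappa}(f|_\kappa S)(i/t)=O\!\big(t^{d/2-2}e^{-\pi/t}\big)$ as $t\to0^{+}$, and the substitution $u=1/t$ then shows $\int_0^1|f(it)|t^{-k'}\,\d t<\infty$ for \emph{every} $k'>0$. This gives (a), and in fact the strengthened form of (a) from the Remark after Proposition \ref{prop:Fconstruct}, so that $F_f$ will be of Schwartz class. For (b): as $\Im z\to\infty$ with $|\Re z|<c$ one has $\p(z)=O_c(1)$, and from $\Delta(z)=r^2\prod_{n\ge1}(1-r^{2n})^{24}$ one gets $|\Delta(z)^{l/2}|\ge C_c\,e^{-\pi l\Im z}$; hence $f(z)=O_c(e^{\pi l\Im z})$, so (b) holds with $\delta=l$.

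Conditions (c) and (d) are where the precise value of $\ep$ matters. Since $d/2-2$ is even,
$$
(z/i)^{d/2-2}=i^{2-d/2}z^{d/2-2}=(-1)^{d/4+1}z^{d/2-2}=\ep\, z^{d/2-2},\qquad\text{hence}\qquad \ep\,(z/i)^{d/2-2}=z^{d/2-2}=z^{-\kappa}.
$$
Clearing the common factor $-\tfrac14$ and using this, condition (c) for $\la=\pm1$ becomes $f(z\mp1)=-\,z^{-\kappa}f(-1/z\pm1)$, i.e. $f|_\kappa T^{\mp1}=-\,f|_\kappa(T^{\pm1}S)$ (reading off the automorphy factor $z$ of the matrices realising $z\mapsto -1/z\pm1$). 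But applying $|_\kappa S$ to $f|_\kappa T+f|_\kappa S=f$ and using $f|_\kappa S^2=f$ gives $f|_\kappa(TS)=f|_\kappa S-f=-\,f|_\kappa T$, while $f|_\kappa T^{\pm2}=f$ gives $f|_\kappa T^{-1}=f|_\kappa T$, and combining these two facts yields both forms of (c). Likewise, clearing $-\tfrac14$ and using $f(z-1)=f(z+1)$ (from $f|_\kappa T^{2}=f$), condition (d) reduces to $2f(z+1)-2f(z)=-2z^{-\kappa}f(-1/z)$, i.e. $f|_\kappa T-f=-\,f|_\kappa S$, which is once more $f|_\kappa T+f|_\kappa S=f$. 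Hence (a)--(d) all hold, Proposition \ref{prop:Fconstruct} applies (with the Remark after it), and it produces the radial Schwartz function $F_f$ with $\ft{F_f}=\ep F_f$; the displayed integral formula for $|x|^2>l$ is just the integral representation of Proposition \ref{prop:Fconstruct} evaluated at $s=|x|^2$ in its half-plane of absolute convergence $\{\Re s>\delta=l\}$.

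The single genuinely delicate step, I expect, is the slash-operator bookkeeping above: one must track the signs carefully (from $S^2=-I$, the evenness of $\kappa$, and the identity $\ep(z/i)^{d/2-2}=z^{-\kappa}$) and use the normality of $\Gamma(2)$ in $\mathrm{SL}_2(\z)$ to manipulate $\p$, $\p_T=\p|_kT$ and $\p_S=\p|_kS$ inside $M_k(\Gamma(2))$, in order to see that $\p\in\ker(L)$ is precisely what makes (c) and (d) hold. The analytic estimates (a) and (b) are routine once the $q$-expansion of $\Delta$ and the vanishing $[r^{\le l}]\p_S=0$ are used. (The hypothesis $l\ge d/12$ enters only to ensure $k=2-d/2+6l\ge2$, so that $M_k(\Gamma(2))$ is a space of honest holomorphic modular forms; it is not needed to verify (a)--(d).)
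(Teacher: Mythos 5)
Your proof is correct and follows essentially the same route as the paper: verify (a) via the vanishing $[r^{\le l}]\p_S=0$ so that $f|_\kappa S=O(r)$, note (b) from the $r$-expansion of $f$ starting at $r^{-l}$, identify (d) with the kernel relation $\p=\p_T+\p_S$, and deduce (c) from (d) together with $2$-periodicity. You merely spell out in full the slash-operator and sign computations (including $\ep(z/i)^{d/2-2}=z^{-\kappa}$ and the use of $S^2=-I$, $T^2\in\Gamma(2)$) that the paper dismisses as "a simple computation," and you correctly invoke the strengthened form of condition (a) to get the Schwartz property.
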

\begin{proof}
We will show that all conditions in Proposition \ref{prop:Fconstruct} are satisfied for $p(s)=\sin^2(\tfrac{\pi}{2}s)$. First, observe that if $f\in \F_{d,l}$, then condition (a) in Proposition \ref{prop:Fconstruct} is equivalent to
$$
\int_{i}^{\infty i} |f|_kS(z)||z|^{6l} |\d z| <\infty.
$$
This is true because $f=\p/\Delta^{l/2}$ and
$$
z^{6l}f|_kS =\frac{\p_S(z)}{\Delta^{l/2}(z) },
$$
and so we conclude the Fourier expansion of $z^{6l}f|_kS$ starts at $r$. Condition (b) in Proposition \ref{prop:Fconstruct} is trivial since $f$ is $2$-periodic and has Fourier expansion starting at $r^{-l}$. A simple computation shows that condition (d) implies (c) under the $2$-periodicity of $f$. Using the modular properties of $\Delta(z)$, we see that condition (d) in Proposition \ref{prop:Fconstruct} is equivalent to
$$
\p = \p_S+\p_T,
$$
which holds true.
\end{proof}

\begin{obs}
It is worth pointing out that by the fact that (see \cite[Remark after Theorem 8.4]{EZ85} and \cite[Equation (2.13)]{CKMRV21}) 
$$
M_k(\Gamma(2)) = M_k\oplus UM_{k-2}\oplus VM_{k-2}\oplus U^2M_{k-4}\oplus V^2M_{k-4}\oplus UVWM_{k-6},
$$
routine computations show that the kernel of $\p \mapsto\p_T+\p_S - \p$ in $M_k(\Gamma(2))$ coincides with
\begin{align*}
(U-V)M_{k-2} \oplus (U^2 - V^2)M_{k-4},
\end{align*}
which has dimension $ \lceil k/6 \rceil$ for $k\geq 6$, but is trivial otherwise. One can also show that 
$$
\dim \F_{d,l}=\bigg\lceil \frac{k}{6} \bigg\rceil-\frac{l}{2}=\frac{l}{2} -\bigg \lfloor \frac{d-4}{12} \bigg\rfloor.
$$
\end{obs}

\begin{lemma}[$\Gamma(1)$-construction]\label{lem:Ga1construction}
Let $d\geq 4$ be an integer divisible by $4$. Let $l\geq d/12$ be an even integer and set 
$$
k=2-d/2+6l  \quad \text{ and }  \quad  \ep=(-1)^{d/4}.
$$
Consider the following linear operator:
\begin{align*}
L: \psi \in M_{k+2}^{\leq 2} & \mapsto ([q^0] \psi,\, [q^{1}]\psi, ..., \, [q^{l/2}]\psi).
\end{align*}
Then any function in the vector space 
$$
\G_{d,l}:=\frac1{\Delta^{l/2}}(z^2\ker(L)_S),
$$
where $z^2\ker(L)_S:=\{z\mapsto z^2 \psi|_{k+2}S(z): \psi \in \ker(L)\}$, satisfies all conditions in Proposition \ref{prop:Fconstruct} for $\ep$, $d$ and $p(s)=\sin^2(\tfrac{\pi}{2} s)$. In particular, for any $g\in \G_{d,l}$, we have an associated radial Schwartz function $G_g:\r^d\to\cp$ such that $\ft {G_g}=\ep G_g$ and
$$
G_g(x) =\sin^2(\tfrac{\pi}{2} |x|^2)\int_0^{\infty i} g(z)e^{\pi i z |x|^2}\frac{\d z}{i}
$$
if $|x|^2>l$.
\end{lemma}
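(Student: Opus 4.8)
The plan is to check that every $g\in\G_{d,l}$ meets conditions (a)--(d) of Proposition \ref{prop:Fconstruct} for the eigenvalue $\ep=(-1)^{d/4}$, the dimension $d$, and $p(s)=\sin^2(\tfrac\pi2 s)$ --- so that $\Lambda=\{0,1,-1\}$ with $a_0=\tfrac12$, $a_{\pm1}=-\tfrac14$. Granting this, Proposition \ref{prop:Fconstruct} and the remark after it furnish a radial Schwartz function $G_g:\r^d\to\cp$ with $\ft{G_g}=\ep\,G_g$, and the integral formula for $|x|^2>l$ follows from identity \eqref{eq:h-id} together with absolute convergence of $\int_0^{\infty i}g(z)e^{\pi i z s}\,\d z$ for $\Re s>l$, exactly as in Lemma \ref{lem:Ga2construction}. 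The argument runs parallel to the proof of Lemma \ref{lem:Ga2construction}, with two structural differences: here the $S$-involution is already built into $g=\Delta^{-l/2}z^2\psi|_{k+2}S$, and conditions (c)--(d) will be \emph{automatic} from quasimodularity of $\psi$ over the full group $\mathrm{SL}_2(\z)$, rather than cut out by a linear relation in $\ker(L)$ (the part played by $\p=\p_S+\p_T$ in the $\Gamma(2)$ case).

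First I would fix the bookkeeping. Write $\psi=\psi_0+E_2\psi_1+E_2^2\psi_2$ with $\psi_j\in M_{k+2-2j}$, the unique decomposition coming from \eqref{eq:quasimodularcharac}, and set
$$
\phi=\frac{\psi}{\Delta^{l/2}},\qquad \phi^{(1)}=\frac{\psi_1+2E_2\psi_2}{\Delta^{l/2}},\qquad \phi^{(2)}=\frac{\psi_2}{\Delta^{l/2}},
$$
all holomorphic and $1$-periodic on $\HH$ (since $l$ is even), with $\phi$ quasimodular of weight $4-d/2$ because $\Delta^{l/2}\in M_{6l}$ and $k=2-d/2+6l$. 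The hypothesis $\psi\in\ker(L)$, i.e. $[q^j]\psi=0$ for $0\le j\le l/2$, says precisely that the $q$-expansion of $\phi$ starts at $q^1$, while those of $\phi^{(1)},\phi^{(2)}$ start only at $q^{-l/2}$. Using $E_2(\gamma z)=(cz+d)^2E_2(z)+\tfrac{6c}{\pi i}(cz+d)$ and $g(z)=\Delta(z)^{-l/2}z^{-k}\psi(-1/z)=z^{\,d/2-2}\phi(-1/z)$, a short expansion then gives
$$
g(z)=z^2\phi(z)+\frac{6z}{\pi i}\phi^{(1)}(z)+\frac{36}{(\pi i)^2}\phi^{(2)}(z),
$$
and, for any $\gamma=\left(\begin{smallmatrix}a&b\\c&d\end{smallmatrix}\right)\in\mathrm{SL}_2(\z)$,
$$
\phi(\gamma z)=(cz+d)^{4-d/2}\phi(z)+\frac{6c}{\pi i}(cz+d)^{3-d/2}\phi^{(1)}(z)+\frac{36c^2}{(\pi i)^2}(cz+d)^{2-d/2}\phi^{(2)}(z).
$$

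Conditions (a) and (b) are then just growth at the two cusps. Near $i\infty$ the formula for $g$ and the above $q$-expansions yield $g(z)=O_c\!\big((1+|z|)e^{\pi l\,\Im z}\big)$ on $\{\Im z>c>|\Re z|\}$, so (b) holds with any $\delta>l$ (the polynomial factor is harmless, which is also why the integral formula comes out valid for $|x|^2>l$). Near $0$ one has $|g(it)|=t^{\,d/2-2}|\phi(i/t)|$, and $\phi(i/t)=O(e^{-2\pi/t})$ because $\phi$ starts at $q^1$, so $|g(it)|=O\!\big(t^{\,d/2-2}e^{-2\pi/t}\big)$ as $t\to0^+$; hence $\int_0^1|g(it)|t^{-n}\,\d t<\infty$ for every $n>0$, the strengthened form of (a), which gives Schwartz regularity via the remark after Proposition \ref{prop:Fconstruct}. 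This is the only place where $\psi\in\ker(L)$ is used.

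Finally, for (c) and (d) I would use the transformation law of $\phi$ at $ST^{\pm1}=\left(\begin{smallmatrix}0&-1\\1&\pm1\end{smallmatrix}\right)$ and at the two $\mathrm{SL}_2(\z)$-matrices realizing $z\mapsto z/(z+1)$ and $z\mapsto-z/(z-1)$; in each case the relevant power $(cz+d)^{\bullet}=(z\pm1)^{\bullet}$ combines with the prefactor in $g(\cdot)=(\cdot)^{d/2-2}\phi(-1/\cdot)$ so that the exponents collapse exactly as in the displayed formula for $g(z)$, yielding
\begin{align*}
g(z\pm1)&=(z\pm1)^2\phi(z)+\frac{6(z\pm1)}{\pi i}\phi^{(1)}(z)+\frac{36}{(\pi i)^2}\phi^{(2)}(z),\\
g(z+\eta)&=z^{\,d/2-2}\,g(-1/z-\eta)\qquad(\eta=\pm1).
\end{align*}
Taking the second difference of the first line and using $(z+1)^2+(z-1)^2-2z^2=2$ (the $\phi^{(1)},\phi^{(2)}$ contributions cancelling) gives $g(z-1)+g(z+1)-2g(z)=2\phi(z)$; combined with $g(-1/z)=z^{\,2-d/2}\phi(z)$ and the elementary identity $\ep\,i^{\,2-d/2}=-(-1)^{d/4}i^{-d/2}=-1$ (valid since $4\mid d$), this is precisely condition (d) for $a_0=\tfrac12$, $a_{\pm1}=-\tfrac14$, while the second line is precisely condition (c); Proposition \ref{prop:Fconstruct} then completes the proof. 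The step needing the most care is this last one: aligning the powers of $z$ and the scalar $i$-factors so that (c) and (d) emerge with eigenvalue $\ep=(-1)^{d/4}$ rather than its negative --- this is where the dimension, and the necessity of using the $S$-twist $z^2\psi|_{k+2}S$ instead of $\psi$ itself, genuinely enters; everything else is forced by quasimodularity.
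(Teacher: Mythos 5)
Your proof is correct and follows essentially the same route as the paper's: conditions (a)--(b) come from the $q$-expansions at the two cusps (with the kernel condition forcing $\psi/\Delta^{l/2}$ to vanish at $i\infty$, hence controlling $g$ near $0$ through $g(z)=z^{d/2-2}\phi(-1/z)$), and conditions (c)--(d) come from the depth-$2$ quasimodular transformation law of $E_2$, your second-difference identity $g(z+1)+g(z-1)-2g(z)=2\phi(z)$ being precisely the paper's identity $\wt\psi(z)-\tfrac12(\wt\psi(z+1)+\wt\psi(z-1))=\psi(z)$. You merely spell out what the paper dismisses as ``simple computations'' and verify (c) by direct computation rather than via the paper's group-theoretic shortcut $ST^{-1}(STS)^{-1}=T$ combined with $1$-periodicity of $\psi$.
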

\begin{proof}
We will show that all conditions in Proposition \ref{prop:Fconstruct} are satisfied for $p(s)=\sin^2(\tfrac{\pi}{2} s)$. Conditions (a) and (b) follow in a similar manner as in the proof of Lemma \ref{lem:Ga2construction}. Setting $g=\psi|_kS/\Delta^{l/2}$, condition $(c)$ for $g$ is then equivalent to
$$
\psi|_kST^{-1}=\ep(-1)^{d/4}\psi|_kSTS,
$$
which is true since $\psi$ is $1$-periodic, $\ep(-1)^{d/4}=1$ and  $ST^{-1}(STS)^{-1}=T$.
Condition $(d)$ for $g$ is equivalent to
$$
\wt \psi(z)-\tfrac12(\wt \psi(z+1)+\wt \psi(z-1)) = \psi(z),
$$
where $\wt \psi(z) =\psi|_kS(z)$. 
However, simple computations show that this equation holds true for any function $\psi\in M_{k+2}^{\leq 2}$ using the characterization \eqref{eq:quasimodularcharac} and the functional equation of $E_2(z)$.
\end{proof}

\begin{obs}\label{obs:dimG}
It is not hard to show any function $g\in \G_{d,l}$ has an $q$-expansion of the form
$$
g=\sum_{n= -l/2}^\infty a_n q^{n},
$$
where $a_n$ is a quadratic polynomial in $z$ for $n\geq 1$, but affine for $n=-l,...,0$.
\end{obs}

The following lemma is reminiscent of the numerical method employed in \cite{CKMRV17}.

\begin{lemma}[Effective tail bounds]\label{lem:efflobounds}
Let $P(X,Y,Z)$ and $Q(X,Y,Z)$ be homogeneous polynomials of degree $k/2$ and $k+2$, respectively, where $k$ is even. Let $|P|$ and $|Q|$ denote the homogeneous polynomials derived from $P$ and $Q$, where each coefficient is replaced by its absolute value. Assume that $Q$ has no power of $X$ larger than $2$. Define the following holomorphic modular forms:
$$
\p= P(U,V,W)  \ \  \text{ and } \ \  \psi=  Q(E_2,E_4,E_6) .
$$
Let $\p_M$ and $\psi_M$ denote the tail of their respective $r$-series and $q$-series from the $(M+1)$-term onward.
Let 
$$
\p_S= P(U_S,V_S,W_S)
$$
and $(\p_S)_M$  be the tail of the $r$-series of the above function from the $r^{M+1}$-term Furthermore, letting $w=-\pi i z$ and
$$
w^2 \psi_S =  w^2 Q((E_2)_S,E_4,E_6),
$$
denote by $(w^2\psi_S)_M$ the tail of the $q$-series above from the $q^{M+1}$-term. Finally, let 
$$
R_M(p,j)=\sum_{n>M}{(n+1)^p}e^{-\pi{(n-j)}} \ \ \text{ and } \ \ S_M(p,j)=\sum_{n>M}{(n+1)^p}e^{-2\pi{(n-j)}}.
$$
Then for $j\leq M+1$ , $t\geq 1$, $r=e^{-\pi t}$ and $q=r^2$, we have:
\begin{enumerate}
\item  $|\p_M(it)| \leq |P|(8,8,8) R_M(\tfrac{3k-2}{2},j)r^j$;

\item $|\psi_M(it)| \leq |Q|(24,240,504) S_M(\tfrac{5k+10}{4},j)q^j$;

\item $|(\p_S)_{M}(it)| \leq |P|(8,8,8)R_{M}(\tfrac{3k-2}{2},j)r^{j}$;

\item $|(w^2 \psi_S)_M(it)| \leq 13|Q|(24,240,504)|S_M(\tfrac{5k+10}{4},j)q^j$.
\end{enumerate}
\end{lemma}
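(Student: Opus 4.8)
The plan is to reduce each of (1)--(4) to a \emph{pointwise} bound on Fourier coefficients and then to sum the tail of the series by hand. The basic device is that for a series $g=\sum_{n\ge n_0}c_nr^n$ with $r=e^{-\pi t}$, $t\ge 1$, and $0\le j\le M+1$, every term of the tail has $n-j\ge M+1-j\ge 0$, so
\begin{equation*}
|g_M(it)|\le\sum_{n>M}|c_n|e^{-\pi nt}=r^{\,j}\sum_{n>M}|c_n|e^{-\pi(n-j)t}\le r^{\,j}\sum_{n>M}|c_n|e^{-\pi(n-j)},
\end{equation*}
and likewise with $q=r^2$ and $2\pi$ replacing $\pi$. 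Hence, once one proves $|c_n|\le C(n+1)^p$ for all $n$, the definitions of $R_M$ and $S_M$ give exactly the claimed bound; so everything comes down to four coefficient estimates with the stated $C$ and $p$.

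First I would record the ``atomic'' estimates. Jacobi's four-square theorem, $[\Theta_{00}^4]_n=r_4(n)=8\sum_{d\mid n,\,4\nmid d}d$, together with the crude $\sum_{d\mid n}d\le n(n+1)/2$, gives $|[\Theta_{00}^4]_n|\le 8(n+1)^2$; since $\Theta_{01}^4(z)=\Theta_{00}^4(z+1)$ one has $[\Theta_{01}^4]_n=(-1)^n[\Theta_{00}^4]_n$, and $[\Theta_{10}^4]_n$ counts representations of $4n$ by four odd squares so is dominated by $r_4(4n)$ --- both obey the same bound $8(n+1)^2$. For Eisenstein series, $\si_{m-1}(n)\le\sum_{d=1}^nd^{m-1}\le n^m$ yields $|[E_2]_n|\le 24(n+1)^2$, $|[E_4]_n|\le 240(n+1)^4$, $|[E_6]_n|\le 504(n+1)^6$ (with the constant terms checked by hand). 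I would then prove a one-line multiplicativity lemma: if $|[X]_n|\le C_X(n+1)^{p_X}$ and $|[Y]_n|\le C_Y(n+1)^{p_Y}$ for all $n$, then, bounding each factor in $\sum_{a\le n}|[X]_a||[Y]_{n-a}|$ by its value at $n$, $|[XY]_n|\le C_XC_Y(n+1)^{p_X+p_Y+1}$; iterating over $m$ factors each of type $8(n+1)^2$ gives $8^m(n+1)^{3m-1}$.

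These ingredients give (1), (2), (3) at once. Writing $\p=P(U,V,W)$ with $P$ homogeneous of degree $k/2$ and replacing every coefficient of $U,V,W$ by $8(n+1)^2$, the multiplicativity lemma gives $|[\p]_n|\le 8^{k/2}|P|(1,1,1)(n+1)^{3k/2-1}=|P|(8,8,8)(n+1)^{(3k-2)/2}$ since $|P|(8,8,8)=8^{k/2}|P|(1,1,1)$; that is (1). For $\psi=Q(E_2,E_4,E_6)$ one tracks weighted degrees: a monomial $E_2^iE_4^jE_6^l$ with $2i+4j+6l=k+2$ has coefficient bound $24^i240^j504^l(n+1)^{3i+5j+7l-1}=24^i240^j504^l(n+1)^{(k+2)+(i+j+l)-1}$, and since $4(i+j+l)=2i+k+2-2l\le k+6$ the hypothesis $i\le 2$ forces every exponent to be $\le(5k+10)/4$; summing over monomials gives (2) with constant $|Q|(24,240,504)$. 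For (3) note $\p$ has weight $k$, so the transformation laws $U|_2S=-U$, $V|_2S=-W$, $W|_2S=-V$ give $\p_S=P(U_S,V_S,W_S)=(-1)^{k/2}P(U,W,V)$, whose coefficients are bounded exactly as in (1).

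The main obstacle is (4), the only place where the anomaly in $E_2|_2S$ enters. Since $(E_2)_S=E_2+\tfrac{6}{\pi i z}=E_2-\tfrac 6w$, writing $Q=Q_0+XQ_1+X^2Q_2$ (legitimate precisely because $i\le 2$) and clearing denominators,
\begin{equation*}
w^2\psi_S=w^2Q_0(E_4,E_6)+w(wE_2-6)Q_1(E_4,E_6)+(wE_2-6)^2Q_2(E_4,E_6),
\end{equation*}
a polynomial in $w$ of degree $\le 2$ whose three $q$-series coefficients are, respectively, those of $\psi$ itself (weighted degree $k+2$), of $Q_1(E_4,E_6)+2E_2Q_2(E_4,E_6)$ (weighted degree $k$), and of $Q_2(E_4,E_6)$ (weighted degree $k-2$). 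Each of these is estimated by the argument of (2); in particular the constraint $i\le 2$ is again what keeps all three exponents $\le\tfrac{5k+10}{4}$. What remains --- and what I expect to be the fiddly part --- is the bookkeeping: one must verify the weighted-degree count for the two derived forms and combine the three estimates together with the numerical constants $6,12,36$ coming from $(wE_2-6)^i$ (and the powers of $w$, which for $t\ge 1$ only multiply by bounded factors once paired with the exponential decay of the series) so that the total multiplicative constant comes out no larger than $13\cdot|Q|(24,240,504)$.
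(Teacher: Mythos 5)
Your proposal is correct and follows essentially the same route as the paper: pointwise coefficient bounds from the four-square theorem and divisor-sum estimates, the same multiplicativity lemma for products of series, the same weighted-degree count exploiting $i\le 2$ to get the exponent $\tfrac{5k+10}{4}$, and for item (4) the same decomposition $w^2\psi_S=w^2Q_0+w(wE_2-6)Q_1+(wE_2-6)^2Q_2$. The bookkeeping you defer is exactly what the paper carries out, by bounding $\pi\le w\le\tfrac{1}{7r}$ for $t\ge 1$ and shifting $j$ inside $S_M$ to absorb the resulting powers of $1/r$, arriving at $11|Q_0|+283|Q_1|+7283|Q_2|<13|Q|(24,240,504)$.
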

\begin{proof}
First we prove (1). Observe that by Jacobi's four-square theorem, the coefficient of $r^n$ in the $r$-series of each of the functions $U,V$ and $W$ is bounded by $8(n+1)^2$. Also note that whenever we multiply $m$ power series $\sum_{n\geq 0} (n+1)^{a_j} r^n$ for $j=1,...,m$, the coefficient of $r^n$ in the product is bounded by $(n+1)^{a_1+...+a_m+m-1}$. We deduce that the coefficient of $r^n$ in the $r$-series of $\p$ is bounded by $|P|(8,8,8) (n+1)^{(3k-2)/2}$. Since $t\geq 1$, we have $0\leq r\leq e^{-\pi}$. This easily implies item (1). The same argument shows item (3). Essentially the same argument shows item (2) by realizing that the coefficient of $q^n$ in the $q$-series of each of the functions $E_2,E_4$ and $E_6$ are bounded by $24(n+1)^2$, $240(n+1)^4$ and $504(n+1)^6$, respectively. We deduce that the coefficient of $q^n$ in the $q$-series of $\psi$ is bounded by $ |Q|(24,240,504) (n+1)^{a+b+c+k+1}$, where $a+b+c$ is maximal among nonnegative integers $a,b$ and $c$ such that $2a+4b+6c=k+2$ and $0\leq a\leq 2$. Greedy choice shows that the sum $a+b+c$ is maximized (or bounded by) when $(a,b,c)=(2,(k-2)/4,0)$, and thus, the coefficient of $q^n$ is bounded by $|Q|(24,240,504) (n+1)^{(5k+10)/4}$. This proves item (2).  Finally, item (4) is more nuanced since we have the presence of $w$. Let
$$
Q(X,Y,Z) = Q_0(Y,Z) + XQ_1(Y,Z)+X^2 Q_2(Y,Z),
$$
so that $|Q(24,240,504)|=|Q_0|(240,504)+24|Q_1|(240,504)+24^2|Q_2|(240,504)$ and
\begin{align*}
w^2 \psi_S & = w^2Q_0(E_4,E_6) + ({w^2}E_2-{6w})Q_1(E_4,E_6) + ({w}E_2-6)^2Q_2(E_4,E_6) \\
& = Qw^2 + (-12Q_2E_2 - 6Q_1)w + 36Q_2.
\end{align*}
For $z=it$ and $t\geq 1$, we have $\pi\leq w=\pi t \leq \tfrac1{7r}$, and we obtain
\begin{align*}
& {|(w^2 \psi_S)_M(it)|}  \\ 
& \leq (\tfrac1{7r})^2{|Q|(24,240,504) }S_M(\tfrac{5k+10}{4},j+1)q^{j+1} \\ & \quad +  \tfrac1{7r} (288|Q_2|(240,504)+6|Q_1|(240,504))S_M(\tfrac{5k}{4},j+1/2)q^{j+1/2}\\   & \quad +36|Q_2|(240,504)S_M(\tfrac{5k-10}{4},j)q^j \\
& \leq (\tfrac{e^{2\pi}}{49}{|Q|(24,240,504) }  +  \tfrac{e^{\pi}}{7} (288|Q_2|(240,504)+6|Q_1|(240,504))\\   & \quad +36|Q_2|(240,504))S_M(\tfrac{5k+10}{4},j)q^j 
\\ & <  (11|Q_0|(240,504) + 283|Q_1|(240,504) + 7283|Q_2|(240,504))S_M(\tfrac{5k+10}{4},j)q^j\\
& <13|Q|(24,240,504)S_M(\tfrac{5k+10}{4},j)q^j
\end{align*}
This proves item (4).
\end{proof}

\section{Proof of Theorem \ref{thm:extremallattices1} }
We assume Theorem \ref{thm:theguy}. Let $d=48$ and $K=\frac{1}{\sqrt{6}}([\sqrt{6},\sqrt{8}]\cup\{\sqrt{10}\})$. Let $\Lambda\subset \r^{48}$ be an even unimodular extremal lattice. In particular, $\Lambda$ is self-dual and $\ell(\Lambda)=\{\sqrt{6},\sqrt{8},...\}$. It is trivial to see $\Lambda$ is $K$-admissible and that
$$
\dens(\Lambda)= \vol \left( B_{48}\right) \left( \frac{\sqrt{6}}{2}\right)^{48}=\frac{(3\pi/2)^{24}}{24!}
$$
Consider $F(x)=H(\sqrt{6}x)$, with $H$ as in Theorem \ref{thm:theguy} for $d=48$. Poisson summation over $\Lambda$ shows that $H(0)=\ft H(0)>0$. The properties of $H$ imply that $F$ satisfies all conditions of Theorem \ref{thm:LP-bounds}, and hence, 
$$
\dens(\Lambda) \leq \Delta_d(K) \leq \vol(\tfrac12B_{48} ) \frac{F(0)}{\ft F(0)}= \vol \left( B_{48}\right) \left( \frac{\sqrt{6}}{2}\right)^{48}.
$$
This shows that equality above is attained and $\Lambda$ is optimal; that is, $\Delta_d(K) = \dens(\Lambda)$.

Now we prove uniqueness among all periodic packings. We follow the same strategy as in \cite[Section 8]{CE03}. Let $P=L+Y+\frac{1}{2}B_d$ be an optimal admissible periodic packing for some lattice $L$ and a set of translations $Y=\{v_1,...,v_M\}$. By Poisson Summation,

\begin{equation}
\sum_{j,l=1}^{M} \sum_{x\in L} F(x+v_j-v_l)=\frac{1}{|L|}\sum_{y\in L^*} \ft{F}(y)\left|\sum_{j=1}^{M} e^{2\pi i yv_j}\right|^2,
\end{equation}
from which we derive

\begin{equation}
M F(0)\geq\frac{1}{|L|}\ft{F}(0) M^2.
\end{equation}
Since $P$ is optimal, we have equality above, from which we derive that $|\sqrt{6}L|=M$ and that $\{x+v_j-v_l : x\in L, \, 1\leq j,l\leq M\}$ is contained in the set of zeros of $F$. By Theorem \ref{thm:theguy}, we deduce that   $\{\sqrt{6}|x+v_j-v_l| : x\in L, \, 1\leq j,l\leq M\}\subset\{0,\sqrt{6},\sqrt{8},\sqrt{10},...\}$. By \cite[Lemma 8.2]{CE03}, we conclude that the subgroup $G$ of $\r^{48}$ generated by the set $\sqrt{6}(L+Y)$ is an even integral lattice with minimal norm $\geq \sqrt{6}$. It also follows that the volume $|G|=\sqrt{N}$, for some integer $N$, and hence, $G$ has at most one point per unit of volume in $\r^{48}$. However, since $|\sqrt{6}L|=M$, the packing $\sqrt{6}(L+Y)+\frac{\sqrt{6}}{2}B_d$ has one sphere per unit of volume. Therefore, $G$ has exactly one point per unit volume, which implies that $N=1$, $G$ is unimodular and $G=\sqrt{6}(L+Y)$. Therefore, $G$ must be an extremal lattice.
This finishes the proof of Theorem \ref{thm:extremallattices1}.\hfill \qed

\section{Proof of Theorem \ref{thm:theguy}}

Let $8\leq d \leq 1200$ be a multiple of $8$, $a=a_d$ , $l= l_d$, $c=c_d$ and
$
K=K_d=\frac{1}{\sqrt{a}}\{\sqrt{a},\sqrt{a+2},...,\sqrt{l}\}
$.  In what follows, we set $k=2-d/2+6l$ (which is congruent to $2$ modulo $4$), $\ep=-1$,
$$
\b=\frac{l}{2} - \bigg\lfloor \frac{d-4}{12} \bigg\rfloor,
$$
$w=-\pi i z$, $r=e^{\pi i z}$ and $q=r^2$. We will abuse notation and write $u(z)=O(r^k)$ if for some $C>0$, we have $|u(z)|\leq C |z|^C e^{-\pi k \Im z }$ for $\Im z>C$.

The assertions below were done with rational arithmetic via PARI/GP \cite{gp} computer algebra system. Below, we will make certain claims, and we will indicate precisely how to prove them. This proof is computer assisted; hence, the necessary ancillary files can be found with the arXiv submission of this paper (arXiv:2308.03925). 

\noindent {\bf Step 1.} We apply Lemma \ref{lem:Ga2construction} and compute a basis for the vector space $\F_{d,l}$ collected as a row vector of functions
$$
[f_1,...,f_\b] = \Delta^{-l/2} [\p_1,\p_2,...,\p_\b],
$$
where $\dim \F_{d,l} = \b$ and
\begin{align*}
\Phi &= [\p_1,\p_2,...,\p_\b] = W^{l+1}(\M_\p {\bf \Theta}^\top)^\top, \\
\M_\p & \in \q^{\b \times (k/2-l )}, \\
{\bf \Theta} & = [W^{j-1} V^{k/2-l-j}]_{j=1,...,k/2-l}.
\end{align*}
We use the symbol $^\top$ for transpose. $\Theta$ is a row vector of basis functions for  $M_{k-2l-2}(\Gamma(2))$. It is easy to show that any function in $\Delta^{l/2}\F_{d,l}$ must be divisible by $W^{l+1}$, and this is the reason why we have isolated it in $\Phi$. To make sure $\Phi$ is uniquely defined (and so $\M_\p$) we normalize $\p_j$ so that 
$$
{\p_j} = r^{2(j-1)} + O(r^{2\b}).
$$

\noindent {\bf Step 2.} We apply Lemma \ref{lem:Ga1construction} and then proceed to find a basis
$$
[g_1,...,g_\cc] = \Delta^{-l/2} [w^2(\psi_1)_S,...,w^2(\psi_\cc)_S],
$$
collected as a row vector, for the subspace of functions $g\in \G_{d,l}$, with $g=w^2 \psi_S/\Delta^{l/2}$, and such that
$$
[w r^{j}](w^2 \Delta^{-l/2} \psi_S) =0
$$
for $j\in \{-l,...,-a\}$.  By the choice of $l$ and $a$, it turns out that $\cc=\b$ for all cases, we have computed (the proof that these dimensions coincide for all $d$ is lengthy and not worth to include here since we are doing this numerically anyways). Here, we set
\begin{align*}
\Psi & =  [\psi_1,\psi_2,...,\psi_\b] = (\M_\psi {\bf E}^\top)^\top, \\
\M_\psi & \in \q^{ \b \times (k+6)/4},\\
{\bf E} & = [E_2^{i}E_4^{j}E_6^n]_{\substack{2i+4j+6n=k+2 \\ j =0,...,(k+2)/4}}.
\end{align*}
$\bf E$ is a row vector of size $(k+6)/4$  that contains the basis functions for $M_{k+2}$ (for each $j=0,..,(k+2)/4$, the tuple $(i,n)$ is given by $i=((k+2)/2-2j) \text{ mod } 3$ and $n=(k+2-2i-4j)/6$). To make sure that $\Psi$ is uniquely defined, we impose that 
$$
{w^2(\psi_j)_S} = r^{2(j-1)} + O(r^{2\b})
$$
for $j=1,...,\b$. 

\noindent {\bf Step 3.} We now solve a linear system of homogeneous equations and set 
\begin{align*}
\p = \V_\p \M_\p {\bo \Theta}^\top\  \text{ and  } \ \psi & = \V_\psi \M_\psi {\bf E}^\top,
\end{align*}
where $ \V_\p$ and $\V_\psi$ are row vectors of size $\b$ (the solutions) that enforce the following $r$-expansion shapes:
\begin{align*}
{-w^2\psi _S- \p} &= \sum_{n= 0}^{l-a} \al_n r^{n} +\sum_{n=l - a+1}^{l} (\al_n + \al_n'w) r^{n}  + O(r^{l+1}),\\
 {-w^2\psi_S + \p} &= \sum_{n= l - a+1}^{l}  (\be_n + \be'_{n}w) r^{n} + O(r^{l+1}),
\end{align*}
for some $\al_n,\al_n',\be_n,\be_n'\in \q$. Recall that by Remark \ref{obs:dimG}, $[w^2 r^j](-w^2\psi_S)=0$ for $j\leq 0$. Note also that, by construction, we already have that $\al_n,\al_n',\be_n,\be_n'$ vanish for odd $n$ in the range $0\leq n\leq l$.
More precisely, given all the previous constraints,  $ \V_\p$ and $\V_\psi$ are solutions of the homogeneous equations
$$
[w^0 r^j]({-w^2\psi_S + \p})=0
$$ 
for $j\in \{0,2,...,l-a\}$. It turns out that $\V_\p$ and $\V_\psi $ are uniquely defined modulo scaling. We then define the row vectors
$$
C_\p = {n}\V_\p \M_\p \ \text{ and } \ C_\psi={n}\V_\psi \M_\psi,
$$
where we choose $n\in \z$ so that $C_\p$ and $C_\psi $ are vectors of integers where $\gcd(C_\p\cup C_\psi)=1$ and the first nonzero coordinate of $C_\p$ is positive. In this way, $C_\p$ and $C_\psi $ are uniquely defined. For instance, for $d=48$, we have
\begin{align*}
C_\p &=2^7\times 3^8 \times [29393, 117572, 307819, 511955, 539410, 362729, 152114, 36480, 3840] \\
C_\psi & = [565675, 7394933, -38880096, 44550063, 41316945, -107522880, 39169185, \\ & \quad \ \ 40077567, -32756064, 5294597, 790075].
\end{align*}
These vectors cannot be simplified much further nor have some easy to guess combinatorial formula since, for instance, the $8$th entry of $C_\psi$ is divisible by the large prime $4453063$ and $7$th entry of $C_\p$ is divisible by the prime $4003$. Experimentally, large primes are often found in the vectors $C_\p$ and $C_\psi$ as dimension grows. A list of all vectors  $C_\p$ and $C_\psi$ for each dimension $d\leq 1200$ multiple of $8$ can be found on the ancillary files in the arXiv submission of this paper (a file named \emph{Cvectors}).

\noindent {\bf Step 4.} We  use Lemmas \ref{lem:Ga2construction} and \ref{lem:Ga1construction} to create a radial Schwartz function $H:\r^d\to\r$ and obtain the integral representations
\begin{align*}
H(x)& =\sin^2(\tfrac{\pi}{2} |x|^2)\int_0^{\infty } \frac{-\pi^2 t^2\psi_S(it)-\p(it)}{\Delta^{l/2}(it)}e^{- \pi t |x|^2}\d t \\
\ft H(x)& =\sin^2(\tfrac{\pi}{2} |x|^2)\int_0^{\infty}  \frac{-\pi^2 t^2\psi_S(it)+\p(it)}{\Delta^{l/2}(it)}e^{ -\pi t |x|^2}\d t,
\end{align*}
that hold for $|x|^2>l$. This shows that
$$
H(x)=\ft H(x)=0 \text{ for } |x|^2 \in \{l+2,l+4,...\}.
$$
However, if we let $g=-\Delta^{-l/2} w^2 \psi_S$ and $f=\Delta^{-l/2} \p$, then, by construction, the $r^j$-coefficient of $g-f$ is a rational number for $j=-l,...,-a$.  Similarly, by construction, the $r^j$-coefficient of $g+f$ vanishes for $j=-l,...,-a$. A straightforward computation shows also that $H(x)=\ft H(x)=0$ for $|x|^2\in \{a,a+2,...\}$, and the integral representation of $\ft H(x)$ above converges for $|x|^2>a-2$.

\noindent {\bf Step 5.} 
From now on, we assume that $d \nequiv 16 \mod 24$. We claim that
\begin{itemize}
\item[(i)] $|\pi^2 \psi(it)|< -\p_S(it)$ for $t\geq 1$;
\item[(ii)] $|\pi^2 t^2\psi_S(it)| < \p(it)$  for $t\geq 1$.
\end{itemize}
Notice that for $|x|^2>a-2$, we have

\begin{align*}
\frac{\ft H(x)}{\sin^2(\tfrac{\pi}{2} |x|^2)} & = \int_1^{\infty } \frac{-\pi^2 t^2\psi_S(it)+\p(it)}{\Delta^{l/2}(it)}e^{- \pi t |x|^2}\d t +  \int_1^{\infty } \frac{-\pi^2\psi(it)-\p_S(it)}{\Delta^{l/2}(it)}e^{- \pi  |x|^2/t} \frac{\d t}{t^{d/2}} > 0,
\end{align*}
by  conditions (i) and (ii).  This implies that $\ft H(x) \geq 0$ for $|x|^2>a-2$ and vanishes exactly at $|x|^2 \in \{a,a+2,a+4,...\}$ if $|x|^2>a-2$. Similarly, for $|x|^2>l$, we have 
\begin{align*}
\frac{H(x)}{\sin^2(\tfrac{\pi}{2} |x|^2)} & = \int_1^{\infty } \frac{-\pi^2 t^2\psi_S(it)-\p(it)}{\Delta^{l/2}(it)}e^{- \pi t |x|^2}\d t +  \int_1^{\infty } \frac{-\pi^2\psi(it)+\p_S(it)}{\Delta^{l/2}(it)}e^{- \pi  |x|^2/t} \frac{\d t}{t^{d/2}}
 < 0,
\end{align*}
and so $H(x) \leq 0$ for $|x|^2>l$ and vanishes exactly at $|x|^2 \in \{l,l+2,l+4,...\}$ if $|x|^2>l-\ep$, for some small $\ep>0$.

\noindent {\bf Step 6.} To prove the claims (i) and (ii) in Step 5, we introduce the following notation: For a given $u=\sum_{n\geq 0} a_n(w) r^n$, we write
$$
\tu{u} = \sum_{n= 0}^N a_n(w) r^n + r^{l+10} \  \text{ and }  \ 
\td{u} = \sum_{n= 0}^N a_n(w) r^n - r^{l+10}.
$$
Recall from Lemma \ref{lem:efflobounds} that $R_N$ and $S_{N/2}$ are the corresponding tail sums. We will choose $N \geq l+10n$, with $n\geq 1$, to be the first integer such that the quantity
$$
\max\bigg\{{\rm abs}(C_\p){\bf \Theta}^\top|_{r=0}R_N((3k-2)/2,l+10), 13{\rm abs}(C_\psi ){\bf E}^\top|_{q=0}S_{N/2}((5k+10)/4,l/2+5)\bigg\}
$$
is less than $1$. Above, ${\rm abs}(v)$, for a vector $v$, is simply the same vector with each coordinate replaced by its absolute value. By Lemma \ref{lem:efflobounds}, this guarantees that 
\begin{align*}
\td\p &\leq \p \leq \tu \p,  \quad \quad \quad \, \td{\p_S} \leq \p_S \leq \tu {\p_S},\\
\td{\psi} &\leq \psi \leq \tu {\psi},  \quad \quad \td{w^2\psi_S} \leq w^2\psi_S \leq \tu {w^2\psi_S},
\end{align*}
for $z=it$, $w=\pi t$ and $t\geq 1$. A list of all $N$'s for each dimension $d$ can be found on the ancillary files in the arXiv version of this paper (a file named \emph{Nnumbers}). For instance, for $d=48$, we have $N=130$.

To prove that condition (i) is satisfied, first we verify that\footnote{We still do not fully know why this happens, but it is true for every case we have computed.}
\begin{enumerate}
\item[({I})] \ $ C_\p \geq 0$,
\end{enumerate}
since it directly shows that $\p(it)>0$ for $t>0$, which implies that (since $k/2$ is odd) $\p_S(it)<0$ for $t>0$. Next, we show that\footnote{Another mystery, but it is true for every case we have computed.}
\begin{enumerate}
\item[({II})] \   $\td \psi$ has only nonnegative coefficents in its $q$-expansion,
\end{enumerate}
which proves that $\psi(it)>0$ for $t\geq 1$ and that $-\pi^2 \psi(it)<-\p_S(it)$ for $t\geq 1$. Next, we use Sturm's method (which can be done via exact rational arithmetic evaluations)  on the variable $r$ to show that the polynomial
\begin{enumerate}
\item[({III})] \  $- \pi_2^2 \tu \psi - \tu{\p_S} > 0$ for $0 < r< \ga_2$,
\end{enumerate}
where we use $\pi_2 = \lceil \pi 10^{m}\rceil 10^{-m}$ for $m=20,40,...,100$ and $\ga_2=\lceil e^{-\pi} 10^{m'}\rceil 10^{-m'}$ for $m'=2,5,8,11$, where we select $(m,m')$ according to necessity (for large dimensions, more precision is sometimes required). From now on, whenever we apply Sturm's method in the range $0 < r< \ga_2$, we will select $\ga_2$ as before. This shows that  $\pi^2 \psi(it) <- {\p_S}(it)$ for $t\geq 1$, and proves that condition (i) holds. 

\noindent {\bf Step 7.} For condition (ii), we write
\begin{align*}
\td{w^2 \psi_S}+\td \p  & = [1,w,w^2][P_0(e^{-w}),P_1(e^{-w}),P_2(e^{-w})]^\top\\
-\tu{w^2 \psi_S}+\td \p & = [1,w,w^2][Q_0(e^{-w}),Q_1(e^{-w}),Q_2(e^{-w})]^\top,
\end{align*}
where the $P_i$'s  and $Q_i$'s are polynomials with integer coefficients and degree at most $N$. 
Note that $P_2(e^{-w})=\td \psi (it)$ and $Q_2(e^{-w})=-\td \psi (it)-e^{-w(l+10)}$, so $-Q_2$ and $P_2$ have only positive coefficients. We then set $x=e^{-w}$ and 
\begin{align*}
w_1(x)  =\sum_{n=1}^N \frac{(1-x)^n}{n} \text{ and } w_2(x)  = \sum_{n=1}^N \frac{(1-x)^n}{n}+\frac{(1-x)^{N+1}}{(N+1)x},
\end{align*}
and note that $w_1(x)<w < w_2(x)$ for $0<x<1$. We then conclude that condition (ii) is implied by the two conditions below:

\begin{enumerate}
\item[({IV})]  \  $[1,w_{j}(x),w_{1}(x)^2][P_0(x),P_1(x),P_2(x)]^\top>0$ for all $ j\in \{1,2\}$ and $0<x<\ga_2$;
\item[({V})]  \  $[1,w_{j}(x),w_{2}(x)^2][Q_0(x),Q_1(x),Q_2(x)]^\top>0$ for all $ j\in \{1,2\}$ and $0<x<\ga_2$.
\end{enumerate}
Both can now be verified using Sturm's method.

\noindent {\bf Step 8.}  The proof that $\ft H(x)>0$ for $c<|x|^2 < a-2$ is more involved. Let $v(w) = -{w^2 \psi_S}+\p$ and $\Delta^{-l/2}=\sum_{n\geq -l} \delta_{l,n} r^n$. For $s=|x|^2>a-2$, we have
\begin{align*}
 \frac{\ft H(x)}{\sin^2(\tfrac{\pi}{2} |x|^2)} & = \int_1^{\infty } \frac{v(\pi t)}{\Delta^{l/2}(it)}e^{- \pi t s}\d t +  \int_1^{\infty } \frac{-\pi^2\psi(it)-\p_S(it)}{\Delta^{l/2}(it)}e^{- \pi  |x|^2/t} t^{-d/2}\d t \\
& =  \int_1^{\infty } v(\pi t)\bigg(\sum_{n=-l}^{N} \delta_{l,n} e^{-n\pi t}\bigg)e^{- \pi t s}\d t + \int_1^{\infty } v(\pi t)\bigg(\sum_{n>N} \delta_{l,n} e^{-n\pi t}\bigg)e^{- \pi t s}\d t \\ & \quad +  \int_1^{\infty } \frac{-\pi^2\psi(it)-\p_S(it)}{\Delta^{l/2}(it)}e^{- \pi  s/t} t^{-d/2}\d t .
\end{align*}
In this way, the last two integrals above converge absolutely for $s=|x|^2>0$, while the first integral extends to a meromorphic function of $s\in \cp$ with possible poles $s=a-2,a-4,...,2,0$. Since $\ft H(x)$ is entire in the variable $s$, the above representation now holds in the region $\re s \in (0,\infty)\setminus \{2,4,...,,a-2\}$. In particular, since $\delta_{l,n}\geq 0$, $v(\pi t)>0$ and $-\pi^2\psi(it)-\p_S(it)>0$ for $t\geq 1$, we obtain the following inequality in the range $s>0$:
$$
\frac{\ft H(x)}{\sin^2(\tfrac{\pi}{2} |x|^2)} > \wt{ \int}_{1}^\infty {v(\pi t)}\bigg(\sum_{n= -l}^{N-l-2} \delta_{l,n} e^{-n\pi t} \bigg) e^{-\pi t s} \d t ,
$$
where by $ \wt{ \int}$ we mean the meromorphic extension of the function defined by this integral.  Now let $A(w)$ be such that $v(w)-A(w)=O(r^l)$. Then the right-hand side above is

\begin{align*}
& \wt{ \int}_{1}^\infty A(\pi t)\bigg(\sum_{n= -l}^{N-l-2} \delta_{l,n} e^{-n\pi t} \bigg) e^{-\pi t s} \d t  + { \int}_{1}^\infty (v(\pi t)-A(\pi t))\bigg(\sum_{n= -l}^{N-l-2} \delta_{l,n} e^{-n\pi t} \bigg) e^{-\pi t s} \d t 
 \\ & >  \wt{ \int}_{1}^\infty A(\pi t)\bigg(\sum_{n= -l}^{N-l-2} \delta_{l,n} e^{-n\pi t} \bigg) e^{-\pi t s} \d t  \\ & \quad \quad + { \int}_{1}^\infty (-\tu{w^2 \psi_S}(it)+\td{\p}(it)-A(\pi t ))\bigg(\sum_{n= -l}^{N-l-2} \delta_{l,n} e^{-n\pi t} \bigg) e^{-\pi t s} \d t  \\
 & = \wt{ \int}_{1}^\infty (-\tu{w^2 \psi_S}(it)+\td{\p}(it))\bigg(\sum_{n= -l}^{N-l-2} \delta_{l,n} e^{-n\pi t} \bigg) e^{-\pi t s} \d t,
\end{align*}
for $s>0$. After a change of variables $w=\pi t$, we conclude that
\begin{align*}
 \frac{\pi \ft H(x)}{\sin^2(\tfrac{\pi}{2} |x|^2)} > \wt{\int}_{\pi}^\infty \bigg( \sum_{m=-a+2}^{2N-l-2} p_m(w)e^{-mw} \bigg) e^{-w s}\d w
\end{align*}
for $s>0$, where
$$
(-\tu{w^2 \psi_S}(it)+\td{\p}(it))\bigg(\sum_{n= -l}^{N-l-2} \delta_{l,n} e^{-n\pi t} \bigg) =: \sum_{m=-a+2}^{2N-l-2} p_m(w)e^{-mw}.
$$
Let now $M=\lfloor (2N-l-2)/2^{m}\rfloor$ with $m=4,3,2,1,0$ (depending on precision). Let
\begin{align*}
A_1(w)&=\sum_{m=-a+2}^{M-1} p_m(w)e^{-mw} + e^{-Mw}w^2[w^2](p_M(w)) \\
A_2(w)&=-A_1(w)+\sum_{m=-a+2}^{2N-l-2} p_m(w)e^{-mw}.
\end{align*}
Let $x=e^{-w}$. We then show that \begin{enumerate}
\item[({VI})]  \  $[1,w_{j}(x),w_{2}(x)^2][[w^0]A_2,[w^1]A_2,[w^2]A_2]^\top>0$ for all $ j\in \{1,2\}$ and $0<x<\ga_2$.
\end{enumerate}
This proves that
\begin{align}\label{ineqHhat}
 \frac{\pi \ft H(x)}{\sin^2(\tfrac{\pi}{2} |x|^2)} > \wt{\int}_{\pi}^\infty A_1(w) e^{-w s}\d w.
\end{align}
Let now $\R$ be the following `rationalization' operator
$$
P(s)=\sum c_{i,j,n} \pi^i e^{-\pi j} s^n \mapsto \R(P)(s):= \sum \wt c_{i,j,n} s^n,
$$
where $\wt c_{i,j,n}=\min_{\al,\be \in \{1,2\}}\{c_{i,j,n} \pi_\al^i \ga_\be^j\}$ and $\pi_1,\pi_2,\ga_1,\ga_2$ are rational approximations of $\pi$ and $\ga=e^{-\pi}$ such that
$$
\pi_1 < \pi < \pi_2 \quad \text{and} \quad \ga_1 < \ga<\ga_2.
$$
Usually these rational approximations are taken to be $m$-digit truncations (in base $10$)  from below and above, with $m\in \{10,15,20,...,50\}$ depending on the required precision.  Observe now that if $p(w)$ is a quadratic polynomial with coefficients in $\q[\pi]$, then we obtain that
\begin{align*}
& e^{\pi s} \int_\pi^\infty p(w)e^{-mw}e^{-sw} \d w \\ & = e^{-\pi m}\int_0^\infty p(w+\pi)e^{-(s+m)w} \d w \\ & = e^{-\pi m} \sum_{j=0}^2 \frac{j![w^j](p(w+\pi))}{(s+m)^{j+1}} \\
& = e^{-\pi m} \frac{(s+m)^3[w^0](p(w+\pi))+(s+m)^2[w^1](p(w+\pi))+2(s+m)[w^2](p(w+\pi))}{(s+m)^4} \\
& \geq \frac{\R(e^{-\pi m}(s+m)[w^0](p(w+\pi)))}{(s+m)^2}+\frac{\R(e^{-\pi m}[w^1](p(w+\pi)))}{(s+m)^2}\\  &\quad  +\frac{\R(2e^{-\pi m}(s+m)[w^2](p(w+\pi)))}{(s+m)^4}\\
& =: {B_m[p](s)}.
\end{align*}
We deduce that \eqref{ineqHhat} is bounded from below by
\begin{align*}
 e^{-\pi s}\bigg(\sum_{m=-a+2}^{M-1} B_m(p_m)(s) + B_M(w^2[w^2](p_M(w))) \bigg)  =: e^{-\pi s}Q(s),
\end{align*}
where $Q(s)$ is a rational function with rational coefficients. Finally, we write $Q=Q_{num}/Q_{den}$, for polynomials $Q_{num}$ and $Q_{den}$ and obtain that $\ft H(x)>0$ for $c<|x|^2 < a-2$ holds true if the following condition is satisfied
\begin{enumerate}
\item[({VII})]  $Q_{den}(s) \prod_{j=0}^{a/2-1} (s-2j)^{-2}$ has only nonnegative coefficients and $Q_{num}(s)>0$ for $c<s<a-2$.
\end{enumerate}
This can be checked by Sturm's method again. Notice that since we have used only strict inequalities, this shows that $\{|x|^2 : \ft H(x)=0 \text{ and } |x|>c_d\}=\{a_d,a_d+2,...\}$.

We then check that conditions (I), (II), (III), (IV), (V), (VI) and (VII) are satisfied using rational arithmetic only, producing in this way a mathematical proof. The necessary algorithm to check this positivity conditions can be found in the ancillary files in the arXiv submission of this paper (a file named \emph{Postest}; please also read the file \emph{Readme}).

\noindent {\bf Step 9.} Finally, for $d=48$, it remains to show the claim that $\{|x|^2:H(x)< 0\}\cap (0,10) = (6,8)$. The method is exactly the same as the one employed on Step 8, except we start from $H(x)$ and $-{w^2 \psi_S}-\p$ and show, after essentially the same procedure, that the resulting rational function $Q(s)$ divided by $(s-6)(s-8)$ is positive in the interval $0<s<10$.

This finishes the proof of Theorem \ref{thm:theguy}.\hfill \qed

\section{Proof of Theorems \ref{thm:extremallattices2} and \ref{cor:theguy} }
Noting that $A_d = (1,\sup(K_d)]\setminus K_d$, we conclude that a packing $P$ is $K$-admissible if and only if it avoids $A_d$. Thus, Theorems \ref{thm:extremallattices2} and \ref{cor:theguy} are equivalent.
The proof of Theorem \ref{cor:theguy} follows directly from Theorem \ref{thm:theguy}. Let $P=\Lambda+Y+\bd$ be a $K_d$-admissible periodic sphere packing. Poisson summation over $\Lambda+Y-Y$ with the function $F(x)=H(\sqrt{a_d}x)$ and $H$ as in Theorem \ref{thm:theguy} shows that 
$$
H(0)\#Y \geq \sum_{y,y'\in Y} \sum_{x\in L} F(x+y-y') = \frac{1}{\vol(\r^d/L)} \sum_{x^*\in L^*} \ft F(x^*)\bigg|\sum_{y\in Y} e^{2\pi i y \cdot x^*}\bigg|^2 \geq  \frac{\ft H(0)a_d^{-d/2}(\#Y)^2}{\vol(\r^d/L)},
$$
hence,
$\dens(P)\leq \vol \left( B_d\right) \left( \frac{\sqrt{a_d}}{2}\right)^d$, which is attained by any even unimodular extremal lattice. Poisson summation implies that equality is attained for a lattice packing ($\#Y=1$) if and only if for every $v\in \Lambda^*$, we have $|v|^2\in \{a_d,a_d+2,...\}$. Then, as in the proof of Theorem \ref{thm:extremallattices1}, one shows that $\sqrt{a_d}\Lambda$ is an even unimodular extremal lattice.\hfill \qed



\section*{Acknowledgements}
The authors thank João P. Ramos, Henry Cohn and Danlyo Radchenko for fruitful discussions on the elaboration of this paper.  The first author acknowledges support from the following funding agencies: The Office of Naval Research GRANT14201749 (award number N629092412126), The Serrapilheira Institute (Serra-2211-41824), FAPERJ (E-26/200.209/2023) and CNPq (309910/2023-4). The second author acknowledges the support of CNPq (141446/2023-4) and FAPERJ (E-26/202.492/2022) scholarships.

{\bf Conflict of interest} None.

{\bf  Data availability statement}
Ancillary files with code and data are available in the arXiv submission of this paper: arXiv:2308.03925 [math.NT].



\begin{thebibliography}{99}

\bibitem{BCZ20}
\textsc{L. Bassalygo, G. Cohen and Z\'{e}mor},
Codes with forbidden distances,
Discrete Mathematics 213 (2000), 3-11.


\bibitem{BZZPP}
\textsc{L.A. Bassalygo, V.A. Zinoviev, V.V. Zyablov, M.S. Pinsker, G.Sh. Poltyrev}, Bounds for codes with
unequal protection of two message sets, Problemy Peredachi Informatsii 15 (3) (1979) 40-49.


\bibitem{gp}
\textsc{C. Batut, K. Belabas, D. Benardi, H. Cohen, and M. Olivier},
 \emph{User's Guide to {PARI-GP}}, version 2.11.1 (2018).



\bibitem{B} \textsc{A. V. Berdnikov}, Estimate for the chromatic number of euclidean space with several forbidden
distances, Mathematical Notes 99 (2016), no. 5, 774-778.


\bibitem{BC} \textsc{P. Boyvalenkov and D. Cherkashin}, The kissing number in 48 dimensions for codes with certain forbidden distances is 52 416 000, Results in Mathematics 8 (2025), no. 3,  

\bibitem{BCD} \textsc{P. Boyvalenkov, D. Cherkashin and P. Dragnev}, Universal optimality of T-avoiding spherical codes and designs, arXiv:2501.13906.

\bibitem{BD} \textsc{P. Boyvalenkov and P. Dragnev}, Energy of codes with forbidden distances in 48 dimensions, arXiv:2412.07577.



\bibitem{CE03}
	\textsc{H. Cohn and N. Elkies}, 
	\newblock{New upper bounds on sphere packings I.} 
	\newblock Ann. of Math. (2) {\bf 157} (2003), no.~2, 689--714. 
	
\bibitem{CG19}
\textsc{H. Cohn and F. Gon\c{c}alves}, 
\newblock {An optimal uncertainty principle in twelve dimensions via modular forms.}
\newblock Invent. Math. {\bf 217} (2019), no.~3, 799--831. 



\bibitem{CKMRV17}
\textsc{H. Cohn, A. Kumar, S. Miller, D. Radchenko, and M. Viazovska},
\newblock {The sphere packing problem in dimension 24.}
\newblock Ann. of Math. (2) {\bf 185} (2017), no.~3, 1017--1033. 

\bibitem{CKMRV21}
\textsc{H. Cohn, A. Kumar, S. Miller, D. Radchenko, and M. Viazovska}, Universal optimality of the $E_8$ and Leech lattices and interpolation formulas. Annals of Mathematics (to appear).


\bibitem{CS}
\textsc{J. H. Conway , N. J. A. Sloane},
Sphere Packings, Lattices and Groups,
Springer New York, NY, 1999.


\bibitem{EZ85}
\textsc{M. Eichler and D. Zagier}, 
The Theory of Jacobi Forms, Progress in Mathematics 55, Birkh\"auser Boston, Inc., Boston, MA, 1985.


\bibitem{EFIN87}
\textsc{H. Enomoto, P. Frankl, N. Ito and K. Nomura},
Codes with Given Distances, Graphs and Combinatorics 3 (1987), 25-38.

\bibitem{FGH}
\textsc{A.S. Feigenbaum, P.J. Grabner and D.P. Hardin},
Eigenfunctions of the Fourier transform with specified zeros, Mathematical Proceedings of the Cambridge Philosophical Society 171 (2021), (2), 329-367.



\bibitem{F}
\textsc{P. Frankl}, Orthogonal vectors in the n-dimensional cube and codes with missing distances, Combinatorica 6 (1986), 279-285.




\bibitem{G63}
\textsc{H. Groemer}, Existenzs\"atze f\"ur Lagerungen im Euklidischen Raum, Math. Z. 81 (1963), 260-278.



\bibitem{H05}
\textsc{T. C. Hales}, 
A proof of the Kepler conjecture, 
Annals of Mathematics 162 (3), (2005), 1065-1185.



\bibitem{JR11}
\textsc{P. Jenkins and J. Rouse},
Bounds for coefficients of cusp forms and extremal lattices,
Bull. London Math. Soc., 43 (2011), no. 5, 927-938.



\bibitem{MOS75}
\textsc{C. L. Mallows, A. M. Odlyzko and N.J.A. Sloane},
Upper Bounds for Modular Forms, Lattices, and Codes,
Journal of Algebra 36 (1975), 68-76.

\bibitem{N23}
\textsc{E. Naslund},
The chromatic number of  with multiple forbidden distances,
Mathematika 69 (2023), Issue 3, 692-718.

\bibitem{N14}
\textsc{G. Nebe}, A fourth extremal even unimodular lattice of dimension 48, Discrete
Math. 331 (2014), 133-136.


\bibitem{RV19}
\textsc{D. Radchenko and M. Viazovska},
 Fourier interpolation on the real line. Publ.math.IHES 129, 51–81 (2019).
 
 
 \bibitem{R} \textsc{A. M. Raigorodskii}, The Borsuk problem and the chromatic numbers of some metric spaces, Uspekhi Mat. Nauk 56 (2001), no. 1(337), 107-146. 





\bibitem{RW}
\textsc{L. Rolen and I. Wagner}, A note on Schwartz functions and modular forms, Archiv der Mathematik, 115 (2020), 35-51.
 
    \bibitem{SS99}
    \textsc{R. Scharlau and R. Schulze-Pillot},
    Extremal Lattices, Algorithmic Algebra and Number Theory (1999), Springer, Berlin, Heidelberg, p 139-170.



\bibitem{Vi17}
\textsc{M. Viazovska},
\newblock {The sphere packing problem in dimension 8.}
\newblock Ann. of Math. (2) {\bf 185} (2017), no.~3, 991--1015. 





\bibitem{Z} \textsc{D. Zagier}, 
{Elliptic modular forms and their
    applications}, in \emph{The 1-2-3 of Modular Forms}, Universitext,
    Springer-Verlag, New York, 2008, pp.~1--103. 
    
\end{thebibliography}
\end{document}